\documentclass[11pt]{article}
\usepackage{etex}           %
\usepackage{amsmath}        
\usepackage{amssymb}        
\usepackage{amsthm}         
\usepackage{mathtools}      
\usepackage{enumitem} 

    \usepackage{tikz}
    \usetikzlibrary{decorations.pathreplacing} 
    \usetikzlibrary{math} 
    \usepackage{colortbl} 
        \usepackage{float}
        \usepackage{subcaption}
        \captionsetup{
          margin={1cm,1cm}
        }
        \counterwithin{figure}{section}
        \counterwithin{table}{section}
        \usepackage{booktabs}
        \usepackage[pdfborder={0 0 0}]{hyperref} 
        \usepackage{cleveref}
        \usepackage{autonum}

    \allowdisplaybreaks

    \usepackage{parskip}
    \makeatletter
    \renewenvironment{abstract}{%
      \if@twocolumn
        \section*{\abstractname}%
      \else
        \small
        \begin{center}%
          {\bfseries \abstractname\vspace{-.5em}\vspace{\z@}}%
        \end{center}%
        \quotation
        \noindent\ignorespaces
      \fi
    }{%
      \if@twocolumn\else\endquotation\fi
    }
    \makeatother
    \usepackage{geometry}[2010/09/12]
        \newgeometry{
            top=1in,
            left=1in,
            bottom=0.7in,
            right=1in,
            includefoot   
    }

    \newcommand{\lrcornerqed}[0]{}
    \renewcommand{\lrcornerqed}[0]{\hfill $\lrcorner$} 
    \newcommand{\lrcornerqedequation}[0]{
        \\[-3.4em]
        \phantom{.}
        \lrcornerqed
        \\[-0.3em]
        }   
    \usepackage{bbm}
            
    \newtheorem{thm}{Theorem}[section] 
    
    \newtheorem{cor}[thm]{Corollary}
    \newtheorem{prop}[thm]{Proposition}

    \theoremstyle{definition} 
    \newtheorem{definition}[thm]{Definition}
    \newtheorem{example}[thm]{Example}
    \numberwithin{equation}{section}
    \newtheorem{remark}[thm]{Remark}
    \newtheorem{obs}[thm]{Observation}

    \crefname{lemma}{Lemma}{Lemmas}
    \crefname{thm}{Theorem}{Theorems}
    \crefname{prop}{Proposition}{Propositions}
    \crefname{definition}{Definition}{Definitions}
    \crefname{example}{Example}{Examples}
    \crefname{claim}{Claim}{Claims}
    \crefname{conj}{Conjecture}{Conjectures}
    \crefname{cor}{Corollary}{Corollaries}
    \crefname{figure}{Figure}{Figures}
    \crefname{remark}{Remark}{Remarks}
    \crefname{table}{Table}{Tables}
    \crefname{section}{Section}{Sections}
    \crefname{chapter}{Chapter}{Chapters}
    \crefname{appendix}{Appendix}{Appendices}
    \crefname{obs}{Observation}{Observations}

\usepackage{xcolor}

\newcommand{\myatop}[2]{\genfrac{}{}{0pt}{}{#1}{#2}}



\title{Combinatorial enumeration of lattice paths by flaws with respect to a linear boundary of rational slope}
\author{Federico Firoozi, Jonathan Jedwab, Amarpreet Rattan \\
Department of Mathematics, Simon Fraser University, Burnaby BC, Canada}

\begin{document}
\date{29 May 2024 (revised 15 July 2025)}

\maketitle


\begin{abstract}
    Let $a,b$ be fixed positive coprime integers. For a positive integer~$g$,
    write $W_k(g)$ for the set of lattice paths from the startpoint $(0,0)$ to the endpoint $(ga,gb)$ with steps restricted to $\{(1,0), (0,1)\}$, having exactly $k$ flaws (lattice points lying above the linear boundary connecting the startpoint to the endpoint). 
    We determine~$|W_k(g)|$ for all $k$ and~$g$.
    The enumeration of lattice paths with respect to a linear boundary while accounting for flaws has a long and rich history, dating back at least to the 1949 results of Chung and Feller. 
    The only previously known values of $|W_k(g)|$ are the extremal cases $k = 0$ and $k = g(a+b)-1$, determined by Bizley in 1954. 
    Our main combinatorial result is that a certain subset of $W_k(g)$ is in bijection with~$W_{k+1}(g)$. One consequence is that the value $|W_k(g)|$ is constant over each successive set of $a+b$ values of~$k$. This in turn allows us to derive a recursion for $|W_k(g)|$ whose base case is given by Bizley's result for $k=0$. We solve this recursion to obtain a closed form expression for~$|W_k(g)|$ for all $k$ and~$g$. Our methods are purely combinatorial.
\end{abstract}

\section{Introduction}
\label{sec:intro}
The lattice path shown in \cref{fig:sample} contains exactly five lattice points that lie above the linear boundary joining the startpoint $(0,0)$ to the endpoint $(8,6)$.

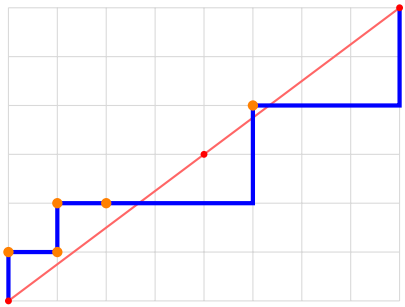
\begin{figure}[H]
    \begin{center}
    \begin{tikzpicture}[scale = 0.65]
    \draw[gray!30] (0,0) grid (8,6);
    \draw[thick,red!60] (0,0) -- (8,6);
    \draw[ultra thick, blue] (0,0)  -- (0,1) -- (1,1) -- (1,2) -- (2,2) -- (3,2) -- (4,2) -- (5,2) -- (5,3) -- (5,4) -- (6,4) -- (7,4) -- (8,4) -- (8,5) -- (8,6);
    \fill[red]	    (0,0) circle (2pt)
    			(4,3) circle (2pt)
    			(8,6) circle (2pt);
    \fill[orange]	    (0,1) circle (3pt)
    			(1,1) circle (3pt)
    			(1,2) circle (3pt)
                    (2,2) circle (3pt)
    			(5,4) circle (3pt);
    \end{tikzpicture}
    \caption{Lattice path from $(0,0)$ to $(8,6)$ with five lattice points above the line connecting $(0,0)$ to $(8,6)$.}
    \label{fig:sample}
    \end{center}
\end{figure}

Throughout, $a,b$ are fixed positive coprime integers and $g$ is a positive integer.
Our objective is to count the number of lattice paths from the startpoint $(0,0)$ to the endpoint $(ga,gb)$ with steps  restricted to $\{(1,0), (0,1)\}$, having exactly $k$ lattice points lying above the linear boundary joining the startpoint to the endpoint. 

Let $p$ be a path.
The \textit{boundary} of $p$ is the line joining its startpoint to its endpoint. 
The path $p$ \textit{contains} the lattice point $(x+i,y+j)$ (equivalently,
$(x+i,y+j)$ is \textit{a point of}~$p$)
if $p$ starts at~$(x,y)$, and the first $i+j\ge 0$ steps of $p$ consist of $i$ of the $(1,0)$ steps and $j$  of the $(0,1)$ steps (in any order). 
We consider the points of $p$ to be ordered according to increasing values of~$i+j$.
A point of $p$ is a \textit{flaw} if it lies strictly above the boundary of~$p$.
For example, the path in \cref{fig:sample} has the five flaws $(0,1),(1,1),(1,2),(2,2),(5,4)$ denoted in orange.

\begin{definition}[Sets $W(g)$ and $W_k(g)$]\label{def:path_sets}
    Let $W(g)$ be the set of all paths from $(0,0)$ to $(ga,gb)$, and let $W_k(g)$ be the subset of such paths having exactly $k$ flaws. \lrcornerqed
\end{definition}

The possible values for the number $k$ of flaws of a path are those satisfying $0\le k<g(a+b)$. 
Straightforward counting shows that $|W(g)| = \binom{ga+gb}{ga}$.
The central objective of this paper is to find an explicit formula for~$|W_k(g)|$ for all $g,k$ satisfying $0\le k<g(a+b)$.  The extremal values $|W_0(g)|$ and $|W_{g(a+b)-1}(g)|$ were found by Bizley~\cite{biz} in 1954 (see \cref{thm:actual_bizley} below). 
Until now, the value of $|W_k(g)|$ was unknown for all other~$k$.

When $g=1$, the values $|W_k(g)|$ and $|W_{g(a+b)-1-k}(g)|$ are equal: in this case, a path cannot contain lattice points on the boundary other than its startpoint and endpoint, so rotation of the path through $180^\circ$ bijectively maps the set $W_k(g)$ to the set $W_{g(a+b)-1-k}(g)$. 
However, in the case $g>1$, a path can contain such lattice points and, 
because points on the boundary are not counted as flaws, rotation of the path through $180^\circ$ does not map the set $W_k(g)$ to the set $W_{g(a+b)-1-k}(g)$. In fact, the values $|W_k(g)|$ and $|W_{g(a+b)-1-k}(g)|$ are not equal in general.

\cref{table:computer_enumeration} displays the numerical value of $|W_k(4)|$ for $(a,b)=(3,2)$, obtained by computer evaluation.
We note two apparent properties suggested by these values:

\begin{table}[ht]
\definecolor{row1}{rgb}{0.95,0.95,0.95}
\definecolor{row2}{rgb}{0.85,0.85,0.85}
\centering
\begin{tabular}[b]{|c|c|c|}
\hline
\rowcolor{red!60} $k$ & $|W_k(4)|$ & $|W_k(4)| - |W_{k+1}(4)|$ \\
\specialrule{1pt}{0pt}{0pt}
\rowcolor{row1} 0 & 7229 & 0 \\
\hline
\rowcolor{row1} 1 & 7229 & 0 \\
\hline
\rowcolor{row1} 2 & 7229 & 0 \\
\hline
\rowcolor{row1} 3 & 7229 & 0 \\
\hline
\rowcolor{row1} 4 & 7229 & 754 \\
\hline
\rowcolor{row2} 5 & 6475 & 0 \\
\hline
\rowcolor{row2} 6 & 6475 & 0 \\
\hline
\rowcolor{row2} 7 & 6475 & 0 \\
\hline
\rowcolor{row2} 8 & 6475 & 0 \\
\hline
\rowcolor{row2} 9 & 6475 & 437 \\
\hline
\rowcolor{row1} 10 & 6038 & 0 \\
\hline
\rowcolor{row1} 11 & 6038 & 0 \\
\hline
\rowcolor{row1} 12 & 6038 & 0 \\
\hline
\rowcolor{row1} 13 & 6038 & 0 \\
\hline
\rowcolor{row1} 14 & 6038 & 586 \\
\hline
\rowcolor{row2} 15 & 5452 & 0 \\
\hline
\rowcolor{row2} 16 & 5452 & 0 \\
\hline
\rowcolor{row2} 17 & 5452 & 0 \\
\hline
\rowcolor{row2} 18 & 5452 & 0 \\
\hline
\rowcolor{row2} 19 & 5452 &\multicolumn{1}{c}{ \cellcolor{white}}\\
\cline{1-2}
\end{tabular}
\caption{Computer evaluation of $|W_k(4)|$ for $(a,b)= (3,2)$.}
\label{table:computer_enumeration}
\end{table}

\begin{enumerate}[label=P\arabic*]
    \item \label{property:const_blocks}
    (\textbf{Constant on blocks}). The value $|W_k(g)|$ is constant on each of $g$ distinct ``blocks'' of $a+b$ consecutive values of~$k$.

    \item \label{property:strict_decrease}
    (\textbf{Strictly decreasing}). The value $|W_k(g)|$ is strictly decreasing between successive blocks.
\end{enumerate}
We shall show that properties \ref{property:const_blocks} and \ref{property:strict_decrease} both hold for all values of~$g,a,b$.

\cref{table:computer_enumeration}, in addition to displaying the value of $|W_k(4)|$ for $(a,b)=(3,2)$, also displays the value of the difference $|W_k(4)|-|W_{k+1}(4)|$. 
These differences suggest a strategy for achieving our central objective: identify a subset $S_k(g)$ of $W_k(g)$ having cardinality $|W_k(g)|-|W_{k+1}(g)|$, and show that the sets $W_k(g) \setminus S_k(g)$ and $W_{k+1}(g)$ are in bijection. We achieve this in our main combinatorial result (\cref{thm:set_sizes}).
Properties~\ref{property:const_blocks} and \ref{property:strict_decrease} 
follow as consequences of this result.

We introduce some additional vocabulary before defining the subset~$S_k(g)$.

\begin{definition}[Path concatenation]\label{def:concatenation}
    Let $p_1$ and $p_2$ be paths having  arbitrary startpoints. The \textit{path concatenation} $p_1p_2$ is the path that starts at the startpoint of $p_1$, takes all the (ordered) steps of~$p_1$, and then takes all the (ordered) steps of~$p_2$. \lrcornerqed
\end{definition}

\begin{definition}[Boundary points]\label{def:BP}
    The \textit{boundary points} of a path $p \in W(g)$ are the points of $p$ that lie on its boundary. Boundary points of $p$ other than the startpoint $(0,0)$ and endpoint $(ga,gb)$ are \textit{interior boundary points} of~$p$.   The startpoint and interior boundary points are the \textit{non-terminal} boundary points. \lrcornerqed
\end{definition}
    
The lattice points lying on the boundary joining $(0,0)$ to $(ga,gb)$ are the $g+1$ points of the form $(ja,jb)$ for $0 \le j \le g$ (see \cref{fig:boundary_points}).
The number of interior boundary points of a path $p \in W(g)$ therefore lies 
in $\{0,1,\dots,g-1\}$. 

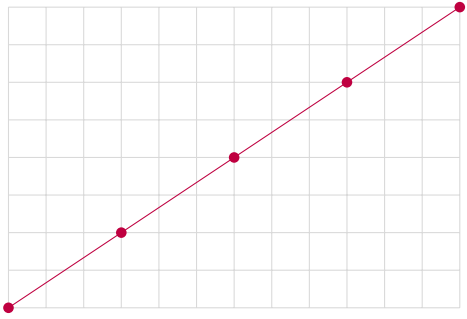
\begin{figure}
    \centering
    \begin{tikzpicture}[scale=0.5]
        \begin{scope}[shift = {(-2,-12)}]
            \tikzmath{
            int \a, \b, \g;
            \a = 3;
            \b = 2;
            \g = 4;
            } 
            \draw[gray!30] 
                (0,0) grid (\g*\a, \g*\b);
            \draw[purple] 
                (0,0) -- (\g*\a, \g*\b); 
            \foreach \j in {0,...,\g}{
                \fill[purple]
                    (\j*\a, \j*\b) circle (4pt);
            }
        \end{scope}
    \end{tikzpicture}
    \caption{Let $(a,b) = (3,2)$. The boundary of a path whose endpoint is $(ga,gb) = (4 \cdot 3, 4\cdot 2)$ contains $g+1 = 5$ lattice points \textcolor{purple}{(red vertices)}.}
    \label{fig:boundary_points}
\end{figure}

Recall that the number $k$ of flaws of a path in $W(g)$ satisfies $0\le k<g(a+b)$. A path in $W(g)$ containing $g(a+b)-1$ flaws has \textit{max flaws}.  Equivalently, the set of paths with max flaws is $W_{g(a+b)-1}(g)$.

\begin{definition}[Subset $S_k(g)$]
\phantomsection\label{def:S} For $0 \leq k < g(a+b)-1$,  let $S_k(g)$ be the 
set of \textit{min-max paths}, namely the subset of $W_k(g)$ containing all 
paths of the form $p_1p_2$, where
    $p_1 \in W_0(g-j)$ and $p_2 \in W_k(j)$  for some $j$ satisfying $0<j<g$, and $p_2$ has max flaws.
    We write $S(g) \coloneqq  \bigcup_{k} S_k(g)$. 
    \lrcornerqed
\end{definition}

See \cref{fig:s_path_examples} for two example min-max paths in~$S(4)$.
A min-max path is, for some~$j$, the concatenation of a path $p_1$ from $(0,0)$ to $\big((g-j)a,(g-j)b\big)$ having no flaws with a path $p_2$ from 
$(0,0)$ to $(ja,jb)$ having max flaws. 
The condition that $p_2$ has max flaws implies that $S_k(g)$ is empty unless $k=j(a+b)-1$ for some $j$ satisfying $0 < j < g$ (and in particular $S_0(g)$ is empty).
So we have
\begin{equation}
    S_k(g) = \varnothing \quad \mbox{for $k \not\equiv -1\pmod{a+b}$}, \label{eqn:S1}
\end{equation}
and, for each $j$ satisfying $0 < j < g$, 
\begin{equation}
    S_{j(a+b)-1}(g) = \Big\{p_1 p_2 : p_1 \in W_0(g-j) \mbox{ and } p_2 \in W_{j(a+b)-1}(j)\Big\} \label{eqn:S2}.
\end{equation}
Note that the path $p_2$ in \cref{def:S} does not contain an interior boundary point (because it has max flaws), but the path $p_1$ might (see \cref{fig:s_path_examples}).

\begin{figure}[H]
\definecolor{BurntOrange}{RGB}{235, 119, 52}
\centering
\begin{subfigure}{.4\textwidth}
  \centering
    \begin{tikzpicture}[scale = 0.5]
        \draw[black,line width = 0.8pt, dotted]
            (6-3*2/13, 4+3*3/2*2/13) -- (6+3*2/13, 4-3*3/2*2/13);
        \draw[gray!30] (0,0) grid (12,8);
        \draw[thick,red!60] (0,0) -- (12,8);
        \draw[ultra thick, violet] (0,0)  -- (1,0) -- (2,0) -- (3,0) -- (4,0) -- (4,1) -- (4,2) -- (5,2) -- (5,3) -- (6,3) -- (6,4);
        \draw[ultra thick, BurntOrange] (6,4) -- (6,5) -- (7,5) -- (7,6) -- (7,7) -- (8,7) -- (9,7) -- (9,8) -- (10,8) -- (11,8) -- (12,8);
        \draw[shift = {(4.5,1.5)}, color = violet] node[] {$p_1$};
        \draw[shift = {(7.5,7.5)}, color = BurntOrange] node[] {$p_2$};
        \fill[red]	(0,0) circle (3pt)
        			(3,2) circle (3pt)
        			(6,4) circle (3pt)
        			(9,6) circle (3pt)
        			(12,8) circle (3pt);
    \end{tikzpicture}
  \caption{A path $p_1p_2$ in $S_9(4)$, where ${p_1 \in W_0(2)}$ and ${p_2 \in W_9(2)}$.}
  \label{fig:s_path_1}
\end{subfigure}%
\qquad
\begin{subfigure}{.4\textwidth}
  \centering
    \begin{tikzpicture}[scale = 0.5]
        \draw[black,line width = 0.8pt, dotted]
            (9-3*2/13, 6+3*3/2*2/13) -- (9+3*2/13, 6-3*3/2*2/13);
        \draw[gray!30] (0,0) grid (12,8);
        \draw[thick,red!60] (0,0) -- (12,8);
        \draw[ultra thick, violet] (0,0)  -- (1,0) -- (2,0) -- (2,1) -- (3,1) -- (3,2) -- (4,2) -- (5,2) -- (6,2) -- (6,3) -- (7,3) -- (8,3) -- (8,4) -- (8,5) -- (9,5) -- (9,6);
        \draw[ultra thick, BurntOrange]  (9,6) -- (9,7) -- (10,7) -- (10,8) -- (11,8) -- (12,8);
        \draw[shift = {(5.5,1.5)}, color = violet] node[] {$p_1$};
        \draw[shift = {(9.5,7.5)}, color = BurntOrange] node[] {$p_2$};
        \fill[red]	(0,0) circle (3pt)
        			(3,2) circle (3pt)
        			(6,4) circle (3pt)
        			(9,6) circle (3pt)
        			(12,8) circle (3pt);
    \end{tikzpicture}
  \caption{A path $p_1p_2$ in $S_4(4)$, where  \mbox{$p_1 \in W_0(3)$} and ${p_2 \in W_4(1)}$.}
  \label{fig:s_path_2}
\end{subfigure}
\caption{Two example min-max paths in $S(4)$ for $(a,b)= (3,2)$.}
\label{fig:s_path_examples}
\end{figure}
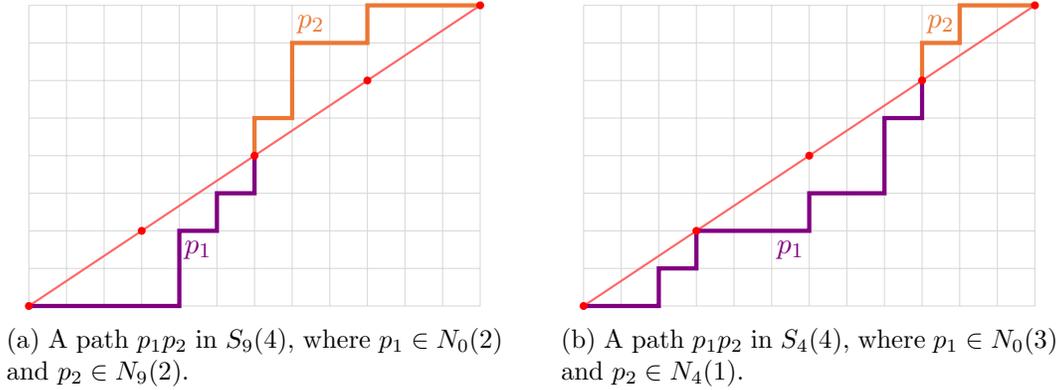

\subsection{Main combinatorial result and consequences}

\begin{thm}[Main combinatorial result]
    \label{thm:set_sizes}
    Let $g,k$ satisfy $0\le k <g(a+b)-1$. Then
    $$
        |W_k(g) \setminus S_k(g)| = |W_{k+1}(g)|.
    $$
    \\[-3.4em] 
    \phantom{.}
\end{thm}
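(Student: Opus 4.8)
The plan is to exhibit an explicit flaw-removing bijection $N_{k+1}(g) \to N_k(g)\setminus S_k(g)$, so I first recast flaws in a form where a single flaw can be toggled cleanly. To each point $(x,y)$ of a path I assign the height $ay-bx$: an up-step $(0,1)$ raises the height by $a$, a right-step $(1,0)$ lowers it by $b$, and the height is $0$ at both the startpoint and the endpoint. Since $\gcd(a,b)=1$, the height vanishes exactly at the boundary points $(ja,jb)$ of \cref{def:BP}, and a point is a flaw precisely when its height is positive. A path thus becomes a walk from $0$ to $0$ with steps $+a$ and $-b$, its non-terminal boundary points are the interior returns to $0$, and the flaw count is the number of strictly positive heights. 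This reduces the theorem to a statement about such height walks.

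The elementary operation I would use is a corner swap. At three consecutive points with heights $(\eta,\,\eta-b,\,\eta-b+a)$ — a valley, coming from a right-step followed by an up-step — replacing the two steps by an up-step followed by a right-step fixes the outer two heights and changes the middle height from $\eta-b$ to $\eta+a$. Exactly one point changes side of the boundary, and the flaw count rises by exactly $1$ precisely when $-a<\eta\le b$; the reverse swap at a peak lowers the flaw count by $1$ under the same condition. So flaws can be added or removed one at a time, but only at corners meeting this height window, and such a swap may turn an interior point into a boundary point or vice versa, thereby altering the decomposition of the path into primitive excursions.

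To pin down a canonical site and to recognise $S_k(g)$, I would decompose each path at its last non-terminal boundary point $B=(ia,ib)$ as a concatenation $qr$ in the sense of \cref{def:concatenation}, where $r$ is the final primitive excursion (lying strictly above or strictly below the boundary, of span $g-i$) and $q\in N(i)$ is the prefix. By \cref{def:S}, $p\in S_k(g)$ holds exactly when $r$ lies strictly above the boundary — so $r$ has max flaws and contributes $(g-i)(a+b)-1$ flaws — and the prefix $q$ is flawless. The flaw-removing map would act near the end of the path, on the last above-arc and the material following it, performing the swap at the canonically determined peak that can legally be lowered; the two cases (final excursion below versus above, and prefix $q$ flawed versus flawless) would drive the case analysis, and I expect a short induction on $g$ through the prefix $q$ to close the argument, with Bizley's counts of max-flaw and flawless paths supplying the base data.

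The main obstacle is proving that the canonical operation is a genuine bijection whose image is exactly $N_k(g)\setminus S_k(g)$, not merely a flaw-changing injection. Concretely, I must show that every path in $N_{k+1}(g)$ has a unique legal lowering site, that distinct paths yield distinct images, and — the delicate point — that the paths of $N_k(g)$ left unhit are precisely those of $S_k(g)$. The tension is that a single swap lowers a flaw only when the local heights fall in the window $(-a,b]$, whereas the obstruction set $S_k(g)$ is defined globally, as a flawless prefix followed by one maximal above-arc. Reconciling these requires carefully tracking how a swap creates or destroys the interior boundary point at $B$: turning the initial peak of the final above-arc into a boundary return splits that arc in two and removes exactly one flaw, and I must verify that this move is blocked precisely when the prefix is already flawless, that is, exactly on $S_k(g)$.
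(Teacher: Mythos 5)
Your height-function reformulation and the computation of the window $-a<\eta\le b$ for a corner swap to change the flaw count by exactly one are both correct, but the core mechanism you propose --- a single adjacent corner swap at a canonically chosen site --- cannot realize the required bijection, and the proposal never actually specifies that site or establishes any of the three properties you yourself flag as ``the main obstacle.'' Those three properties (unique legal site, injectivity, image exactly $N_k(g)\setminus S_k(g)$) are the entire content of the theorem, so as written this is a plan rather than a proof; worse, the plan's elementary move is provably too local. Take $a=b=1$, $g=2$, $k=0$, writing $R$ for $(1,0)$ and $U$ for $(0,1)$. Then $N_1(2)=\{RUUR,\,URRU\}$ and $N_0(2)\setminus S_0(2)=N_0(2)=\{RRUU,\,RURU\}$. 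Each of $RUUR$ and $URRU$ has exactly one peak lying in your legal window (with $\eta=0$), and swapping it yields $RURU$ in both cases; so no rule for choosing ``the'' peak makes the flaw-removing map injective. Moreover $RRUU$, whose unique valley has $\eta=-1=-a$ and hence lies outside the window, is never the image of any flaw-count-changing corner swap, so it cannot be hit at all. Any correct bijection must therefore move some paths by more than an adjacent transposition of two steps.

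This is exactly what the paper's proof does: it splits a path at its last non-terminal boundary point and then at a globally defined extremal point (a highest point below or lowest point above the boundary, \cref{def:hpblpa}), and cyclically interchanges the resulting blocks (\cref{def:phi,def:psi}) in the spirit of the cycle lemma; \cref{obs:flaws} plays the role of your window computation, but at the level of whole blocks rather than single corners. In the example above, the paper's map sends $URRU\mapsto RRUU$ by cyclically shifting the first three steps --- unreachable by one corner swap. To salvage your approach you would have to iterate corner swaps along a canonical staircase until the transported step reaches a boundary-determined position, at which point you are re-deriving the block swap, and you would still owe the injectivity and image arguments. Finally, the induction on $g$ and the appeal to Bizley's counts that you invoke at the end are not needed for this theorem; they enter only later, when solving the recurrence of \cref{cor:rec}.
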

We shall prove \cref{thm:set_sizes} in \cref{sec:bijection} combinatorially. 
Define  
    \begin{align}
        \label{def:mu_j}
        \mu_j (g) \coloneqq |W_{j(a+b)}(g)|
        \quad \text{for each $j$ satisfying $0\le j<g$}.
   \end{align} 
The following result is a first consequence of \cref{thm:set_sizes} and establishes property~\ref{property:const_blocks}.

\begin{cor}[Constant on blocks]
\phantomsection
\label{cor:blocks}
    Let $g$ be a positive integer.  Then
    \[
    |W_k(g)| = \mu_j(g) \ \ \text{for all $j,k$ satisfying $0\le j<g$ and $j(a+b)\le k < (j+1)(a+b)$}.
    \]
    \\[-3.4em] 
    \phantom{.}
\end{cor}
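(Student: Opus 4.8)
The plan is to derive the corollary directly from the main result (\cref{thm:set_sizes}) together with the vanishing of $S_k(g)$ recorded in \cref{eqn:S1}; no new combinatorics is needed. First I would rewrite \cref{thm:set_sizes} in additive form. Since $S_k(g) \subseteq N_k(g)$ by \hyperref[def:S]{the definition of $S_k(g)$}, the set difference has cardinality $|N_k(g) \setminus S_k(g)| = |N_k(g)| - |S_k(g)|$, so the main result reads
\[
    |N_k(g)| - |S_k(g)| = |N_{k+1}(g)| \qquad \text{for } 0 \le k < g(a+b)-1.
\]
I would then invoke \cref{eqn:S1}: whenever $k \not\equiv -1 \pmod{a+b}$ the set $S_k(g)$ is empty, and the displayed identity collapses to $|N_k(g)| = |N_{k+1}(g)|$. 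This pinpoints exactly those $k$ at which consecutive counts must agree.

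Next I would fix $j$ with $0 \le j < g$ and examine the block $j(a+b) \le k < (j+1)(a+b)$. Each $k$ in this block has $k \equiv k - j(a+b) \pmod{a+b}$, so the only value in the block that is $\equiv -1 \pmod{a+b}$ is the largest one, $k = (j+1)(a+b)-1$; all other $a+b-1$ values satisfy $k \not\equiv -1 \pmod{a+b}$. Applying the collapsed identity at $k = j(a+b), j(a+b)+1, \ldots, (j+1)(a+b)-2$ and chaining the resulting equalities yields
\[
    |N_{j(a+b)}(g)| = |N_{j(a+b)+1}(g)| = \cdots = |N_{(j+1)(a+b)-1}(g)|.
\]
Since $\mu_j(g) = |N_{j(a+b)}(g)|$ by definition, this common value is $\mu_j(g)$, which is precisely the claim of the corollary.

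There is no genuine obstacle here; the only point requiring care is the index bookkeeping. Specifically, I must confirm that each $k$ to which \cref{thm:set_sizes} is applied lies in its valid range $0 \le k < g(a+b)-1$. The largest such $k$, over all blocks, occurs at $j = g-1$ and equals $(g-1+1)(a+b)-2 = g(a+b)-2$, which is strictly less than $g(a+b)-1$, so the main result applies throughout and the chain runs uninterrupted from the bottom to the top of every block. Once this range check is made, the corollary follows immediately.
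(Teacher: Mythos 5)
Your proposal is correct and is exactly the argument the paper intends: it deduces the corollary from \cref{thm:set_sizes} together with \eqref{eqn:S1}, which is precisely what the paper's one-line proof cites. You have simply filled in the chaining and index bookkeeping that the paper leaves implicit.
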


\begin{proof}
The result follows directly from \cref{thm:set_sizes} and~\eqref{eqn:S1}.
\end{proof}
We now observe two further consequences of \cref{thm:set_sizes}. 

\begin{cor}[Recurrence relation]
\label{cor:rec}
    Let $g$ be a positive integer. Then
    \begin{equation}
    \phantomsection
        \label{eq:rec}
        \mu_{j-1}(g)-\mu_0(g-j) \mu_{j-1}(j) =\mu_{j}(g) \ \ \text{for each $j$ satisfying $0<j< g$}.
    \end{equation}
    \\[-3.4em] 
    \phantom{.}
\end{cor}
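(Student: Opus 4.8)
The plan is to specialize the main result, \cref{thm:set_sizes}, to the single value of $k$ at which a block boundary occurs, and then to identify each of the three resulting quantities with a $\mu$-value. Fix $j$ with $0<j<g$ and set $k=j(a+b)-1$, the largest index in the $(j-1)$-st block. \cref{thm:set_sizes} gives $|N_k(g)|-|S_k(g)|=|N_{k+1}(g)|$, so it suffices to evaluate all three terms. By \cref{cor:blocks}, since $(j-1)(a+b)\le j(a+b)-1<j(a+b)$ we have $|N_{j(a+b)-1}(g)|=\mu_{j-1}(g)$, and since $k+1=j(a+b)$ lies at the start of the $j$-th block we have $|N_{j(a+b)}(g)|=\mu_j(g)$ directly from~\eqref{def:mu_j}. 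Thus the identity reduces to showing $|S_{j(a+b)-1}(g)|=\mu_0(g-j)\,\mu_{j-1}(j)$.

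To evaluate $|S_{j(a+b)-1}(g)|$ I would use the explicit description~\eqref{eqn:S2}, namely that this set consists exactly of the concatenations $p_1p_2$ with $p_1\in N_0(g-j)$ and $p_2\in N_{j(a+b)-1}(j)$. The key step is to argue that the concatenation map
$$
N_0(g-j)\times N_{j(a+b)-1}(j)\longrightarrow S_{j(a+b)-1}(g),\qquad (p_1,p_2)\mapsto p_1p_2,
$$
is a bijection. Surjectivity is immediate from~\eqref{eqn:S2}. For injectivity, note that every $p_1\in N_0(g-j)$ takes exactly $(g-j)(a+b)$ steps, so in any path of the form $p_1p_2$ the two factors are recovered by splitting after the $(g-j)(a+b)$-th step; equivalently, the split occurs at the fixed boundary point $\big((g-j)a,(g-j)b\big)$. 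Hence the map is injective, giving $|S_{j(a+b)-1}(g)|=|N_0(g-j)|\cdot|N_{j(a+b)-1}(j)|$.

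It then remains to identify the two factors. The first is $|N_0(g-j)|=\mu_0(g-j)$ straight from~\eqref{def:mu_j}. For the second, I would apply \cref{cor:blocks} with $g$ replaced by~$j$: since $(j-1)(a+b)\le j(a+b)-1<j(a+b)$, the index $j(a+b)-1$ lies in the $(j-1)$-st block for paths in $N(j)$, so $|N_{j(a+b)-1}(j)|=\mu_{j-1}(j)$. Combining yields $|S_{j(a+b)-1}(g)|=\mu_0(g-j)\,\mu_{j-1}(j)$, and substituting back produces $\mu_{j-1}(g)-\mu_0(g-j)\,\mu_{j-1}(j)=\mu_j(g)$, as required. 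The only genuinely substantive point is the bijectivity of concatenation; since the number of steps contributed by $p_1$ is fixed, this is routine. One should additionally check---as is implicit in \cref{def:S}---that the flaw count is additive under this concatenation, because a point of the shifted $p_2$ lies strictly above the line to $(ga,gb)$ exactly when the corresponding point of $p_2$ is a flaw of~$p_2$, the two boundaries having the common slope $b/a$.
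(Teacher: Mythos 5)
Your proposal is correct and follows essentially the same route as the paper's proof: specialize \cref{thm:set_sizes} to $k=j(a+b)-1$, identify $|N_k(g)|$ and $|N_{k+1}(g)|$ via \cref{cor:blocks}, and compute $|S_{j(a+b)-1}(g)|=\mu_0(g-j)\,\mu_{j-1}(j)$ from~\eqref{eqn:S2}. The paper leaves the bijectivity of the concatenation map implicit, whereas you spell it out; this is a harmless elaboration, not a difference in approach.
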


\begin{proof}
Let $j$ satisfy $0<j< g$ and let $k=j(a+b)-1$. 
Since $S_k(g)$ is a subset of $W_k(g)$, we have by \cref{thm:set_sizes} that 
\begin{align}
\label{eq:cor:rec}
    |W_k(g)| - |S_k(g)| = |W_{k+1}(g)|.
\end{align}

We know from \cref{cor:blocks} that $|W_k(g)| = \mu_{j-1}(g)$ and $|W_{k+1}(g)| = \mu_j(g)$, and from \eqref{eqn:S2} and \cref{cor:blocks} that
$|S_k(g)| = |S_{j(a+b)-1}(g)| = \mu_0 (g-j) \mu_{j-1}(j)$. Substitute these values into \eqref{eq:cor:rec} to obtain the result.
\end{proof}
The next corollary establishes property \ref{property:strict_decrease}.

\begin{cor}[Strictly decreasing]\label{cor:decreasing}
        Let $g$ be a positive integer. Then $\mu_0(g) > \mu_{1}(g) > \cdots > \mu_{g-1}(g).$
\end{cor}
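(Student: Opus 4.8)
The plan is to reduce the entire chain of strict inequalities to a single positivity statement and then verify it directly. Since the claim $\mu_0(g) > \mu_1(g) > \cdots > \mu_{g-1}(g)$ is just the collection of strict inequalities $\mu_{j-1}(g) > \mu_j(g)$ for each $j$ satisfying $0 < j < g$, I would first rewrite the recurrence of \cref{cor:rec} in the equivalent form
\[
\mu_{j-1}(g) - \mu_j(g) = \mu_0(g-j)\,\mu_{j-1}(j).
\]
It then suffices to show that the right-hand side is strictly positive for every such $j$, which reduces the whole problem to proving that each of the two factors is positive.

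The positivity of both factors follows because each is the cardinality of a nonempty set of paths, and I would exhibit a witness in each case. For the first factor, $\mu_0(g-j) = |N_0(g-j)|$ with $g - j \ge 1$: the monotone path that takes all $(g-j)a$ of its $(1,0)$ steps before all $(g-j)b$ of its $(0,1)$ steps remains weakly below its boundary throughout, so it has no flaws and witnesses $\mu_0(g-j) \ge 1$. For the second factor, \cref{cor:blocks} gives $\mu_{j-1}(j) = |N_{j(a+b)-1}(j)|$, the number of paths in $N(j)$ with max flaws; here the complementary monotone path that takes all $jb$ of its $(0,1)$ steps first and then all $ja$ of its $(1,0)$ steps has exactly $jb + (ja - 1) = j(a+b) - 1$ flaws (the first $jb$ points after the startpoint lie strictly above the boundary, as do the next $ja - 1$ points, while the endpoint lies on the boundary), and hence witnesses $\mu_{j-1}(j) \ge 1$.

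Combining these observations, $\mu_{j-1}(g) - \mu_j(g) = \mu_0(g-j)\,\mu_{j-1}(j) \ge 1 > 0$ for every $j$ with $0 < j < g$, which delivers the required chain of strict inequalities. I do not anticipate a genuine obstacle: the only input beyond \cref{cor:rec} and \cref{cor:blocks} is the elementary existence of a flawless path and of a max-flaw path on a scaled boundary, both of which are produced explicitly above (and could equally be read off from Bizley's extremal enumeration).
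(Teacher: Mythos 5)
Your proposal is correct and follows essentially the same route as the paper: rearrange the recurrence of \cref{cor:rec} to $\mu_{j-1}(g)-\mu_j(g)=\mu_0(g-j)\,\mu_{j-1}(j)$ and observe that the right-hand side is positive. The paper asserts this positivity directly from the definition of $\mu_j(g)$ as a cardinality, whereas you additionally exhibit explicit witness paths (the all-horizontal-first flawless path and the all-vertical-first max-flaw path, both correctly verified); this is just a more detailed justification of the same step.
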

\begin{proof}
This follows from \cref{cor:rec}, noting that 
$\mu_0(g-j)\mu_{j-1}(j)>0$
for $0 < j < g$ by the definition of $\mu_j(g)$ given in~\eqref{def:mu_j}.
\end{proof}

\subsection{The value of \texorpdfstring{$\mu_j(g)$}{mu\_j(g)} and the main enumerative result}\label{subsec:BG:the_value_of_mu_j}

Recall that our central objective is to find an explicit formula for~$|W_k(g)|$ for all $g,k$ satisfying $0 \le k < g(a+b)$.  By \cref{cor:blocks}, it is sufficient to determine the values~$\mu_j(g)$.

The recurrence relation of \cref{cor:rec} for $\mu_j(g)$ has a unique solution for each $j,g$ satisfying $0 \le j < g$, provided the initial values $\mu_0(g)$ (contained in the top row of \cref{table:mu_j_unique_solution}) are known for all~$g$.
The required initial values $\mu_0(g) = |W_0(g)|$ were given by Bizley \cite{biz} in 1954. We shall express these values in \cref{thm:bizley} in terms of a quantity~$H_g$, introduced below. A derivation of $\mu_j(g)$ using only the values of $\mu_0(g)$ is given in \cite[Chapter~4]{firoozi-thesis}.
\begin{table}[ht]
    \centering
    \begin{tabular}{|c|c|c|c|c| >{\columncolor{gray!30}} c|}
    \hline
    \rowcolor{red!60}
         $\mu_0(1)  $                    &   $\mu_0(2)$ &   $\mu_0(3)$ &   $\mu_0(4)$ & $\mu_0(5)$&  $\cdots$  \\ \hline
        \multicolumn{1}{c|}{}  &   \cellcolor{blue!30} $\mu_1(2)$ &   $\mu_1(3)$ &   $\mu_1(4)$ & $\mu_1(5)$ & $\cdots$  \\ \cline{2-6}
        \multicolumn{2}{c|}{}   &  \cellcolor{blue!30}  $\mu_2(3)$ &   $\mu_2(4)$ & $\mu_2(5)$ & $\cdots$  \\ \cline{3-6}
        \multicolumn{3}{c|}{}     &  \cellcolor{blue!30}  $\mu_3(4)$ & $\mu_3(5)$ & $\cdots$   \\ \cline{4-6}
        \multicolumn{4}{c|}{}      & \cellcolor{blue!30}  $\mu_4(5)$& $\cdots$   \\ \cline{5-6}
        \multicolumn{5}{c|}{}      & \cellcolor{blue!30} 
 $\cdots$  \\ \cline{6-6}
    \end{tabular}
    \caption{
    All values $\mu_j(g)$ can be derived using only the values in the top row of the table, but can be more easily derived by also using the values in the coloured diagonal.}
    \label{table:mu_j_unique_solution}
\end{table}

However, the values $\mu_j(g)$ can be derived more easily by additionally making use of the values $\mu_{g-1}(g)$
(contained in the coloured diagonal of \cref{table:mu_j_unique_solution}).
These additional values can be obtained using \cref{cor:blocks} from the values $|W_{g(a+b)-1}(g)|$ given by Bizley~\cite{biz}, and can be expressed in terms of another quantity~$E_g$.
This is the approach we shall use to establish the value of $\mu_j(g)$ in \cref{thm:path_counting_formula}.

We now define the quantities $H_g$ and $E_g$ as sums over all integer partitions of~$g$. Recall that a weakly increasing sequence of positive integers $\lambda$ whose entries sum to $g$ is a \textit{partition of $g$};  we write $\lambda \vdash g$ to indicate this.  Each entry of $\lambda$ is called a \textit{part}.  
We use the notation $\lambda = \langle 1^{m_1}2^{m_2} \cdots \rangle$ to mean that $\lambda$ has $m_i$ parts equal to~$i$, so $g = \sum_{i \geq 1} i m_i$.
For example, the partition $(1,1,2,3)$ of $7$ is also written $\langle 1^22^13^1 \rangle \vdash 7$.

    For $i>0$, let
    \begin{equation}\phantomsection\label{def:ici}
        {c_i} \coloneqq \frac{1}{i(a+b)}\binom{i(a+b)}{ia}.
    \end{equation}

    For a partition $\lambda=\langle 1^{m_1}2^{m_2} \cdots \rangle\vdash g$, let its \textit{length} be $\ell(\lambda) \coloneqq \sum_{i\ge 1} m_i$, and let
    \begin{align}
        c_\lambda &\coloneqq \prod_{i\ge 1} \frac{c_i^{m_i}}{m_i!}.
        \label{def:C_lambda}
    \end{align}
    Now let
        \begin{align}
        {H}_g &\coloneqq \sum_{\lambda \vdash g}  c_\lambda,
        \label{eq:Hg_def}\\
        {E}_g &\coloneqq \sum_{\lambda \vdash g} (-1)^{g-\ell(\lambda)} c_\lambda,
        \label{eq:Eg_def}
    \end{align}
and for convenience let
\begin{equation}\label{eq:E0H0}
   E_0 \coloneqq 1 \quad \mbox{and} \quad H_0 \coloneqq 1.
\end{equation}
\begin{remark}
\label{rem:H_and_E_vs_symmetric_functions}
    For each positive integer~$g$, the quantities $H_g$ and $E_g$ are in fact specializations of (one part) \textit{complete} and \textit{elementary} symmetric functions $h_g$ and~$e_g$, respectively. The standard relationship between the \textit{power sums} $p_i$ and the complete and elementary symmetric functions are 
    \begin{align}
        h_g = \sum_{\lambda \vdash g} \frac{p_\lambda}{z_\lambda}\quad \textnormal{ and }\quad e_g = \sum_{\lambda \vdash g} (-1)^{g-\ell(\lambda)}\frac{p_\lambda}{z_\lambda},
    \end{align}
    where for a partition $\lambda = \langle 1^{m_1} 2^{m_2} \cdots \rangle$ of~$g$, we write $z_\lambda := \prod_{i \ge 1} i^{m_i} m_i!$ and $p_\lambda \coloneqq \prod_{i \ge 1} p_i^{m_i}$.  Then the quantities $H_g$ and $E_g$ in \eqref{eq:Hg_def} and \eqref{eq:Eg_def} are obtained from the specialization given by $p_i = ic_i$. The identity
    \begin{align}
    \label{eq:symfuncrel}
        \sum_{i=0}^g (-1)^i E_i H_{g-i} = 0
    \end{align}
    results from another well-known relationship between the complete and elementary symmetric functions.
    See \cite{aigner} or the classic reference \cite{macdonald} for details on the above and other relevant background on symmetric functions.
    \lrcornerqed
    \end{remark}

\begin{thm}[Bizley \cite{biz}]
\phantomsection\label{thm:actual_bizley}
    We have that
    \begin{align}
        |W_0(g)| &= {H}_g, \label{eq:biz:weak}\\
        |W_{g(a+b)-1}(g)| &= (-1)^{g+1}{E}_g. 
    \end{align}
\end{thm}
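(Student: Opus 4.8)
The plan is to route both identities through the ordinary generating function $F(x) := \sum_{g \ge 0} |N_0(g)|\, x^g$ (with the convention $|N_0(0)| = 1$), and to show that $F(x) = \exp\!\big(\sum_{i\ge 1} c_i x^i\big)$ for the $c_i$ of \eqref{def:ici}. Expanding this exponential by the multinomial theorem and comparing with the definitions \eqref{def:C_lambda} and \eqref{eq:Hg_def} shows that its coefficient of $x^g$ is exactly $H_g$, which gives \eqref{eq:biz:weak}. Throughout I would work with the height potential $h(x,y) := bx - ay$, so that a point is a flaw exactly when $h < 0$, lies on the boundary when $h = 0$, and each $R$-step raises $h$ by $b$ while each $U$-step lowers it by $a$; an element of $N_0(g)$ is then a walk of $n := g(a+b)$ steps that starts and ends at height $0$ and keeps $h \ge 0$ throughout.

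First I would record the decomposition of a no-flaw path at its boundary points: cutting a path at the times where $h = 0$ expresses each element of $N_0(g)$ \emph{uniquely} as an ordered concatenation of \emph{primitive} no-flaw pieces, namely those that meet the boundary only at their two endpoints. Writing $\Pi(x) := \sum_{g\ge 1}\pi_g x^g$ for the generating function of primitive no-flaw paths, this bijection yields the formal identity $F(x) = 1/(1-\Pi(x))$, equivalently $\log F(x) = -\log\!\big(1 - \Pi(x)\big)$.

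The heart of the argument, and the step I expect to be the main obstacle, is a cycle-lemma computation that identifies $\log F$. If $m(P)$ denotes the number of primitive segments of a no-flaw path $P$ to $(ia,ib)$, then the segment decomposition gives $[x^i]\big(\!-\log(1-\Pi)\big) = \sum_P 1/m(P)$, summed over all no-flaw paths $P$ to $(ia,ib)$. I would prove that this weighted count equals $c_i = \tfrac{1}{i(a+b)}\binom{i(a+b)}{ia}$ by letting $\mathbb{Z}/n$, with $n = i(a+b)$, act by cyclic rotation on the set of all $\binom{n}{ia}$ step-sequences reaching $(ia,ib)$. Inside a single rotation orbit, the no-flaw paths are precisely the rotations that begin at a position where the height walk attains its minimum; counting these, and noting that for a no-flaw path the number of boundary contacts equals $m(P)$ equals the number of minima, one checks that every orbit contributes total $1/m$-weight equal to $|O|/n$. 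Summing over orbits gives total weight $\tfrac1n\binom{n}{ia} = c_i$. The delicate point is the case of orbits whose step-sequence is periodic (nontrivial stabilizer), where the number of \emph{distinct} no-flaw rotations and the value of $m(P)$ must be tracked carefully; the weighting by $1/m(P)$ is exactly what makes the per-orbit contribution come out to $|O|/n$ in every case. This yields $-\log(1-\Pi(x)) = \sum_{i\ge 1} c_i x^i$, hence $F(x) = \exp\!\big(\sum_{i\ge 1} c_i x^i\big)$ and \eqref{eq:biz:weak}.

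Finally I would deduce the max-flaw formula from the first one. A path with $g(a+b)-1$ flaws must have all $n-1$ interior lattice points strictly above the boundary, so the rotation by $180^\circ$ about the midpoint $(ga/2, gb/2)$, which reverses the step order and sends $h$ to $-h$, is a bijection between $N_{g(a+b)-1}(g)$ and the primitive no-flaw paths to $(ga,gb)$; thus $|N_{g(a+b)-1}(g)| = \pi_g$. It then remains to extract $\pi_g$ from $\Pi(x) = 1 - 1/F(x)$. From the power-sum specialization behind \eqref{eq:Eg_def} one has $\sum_{g\ge 0} E_g x^g = \exp\!\big(\sum_{i\ge 1}(-1)^{i-1}c_i x^i\big) = 1/F(-x)$, which is precisely the reciprocal relation encoded by \eqref{eq:symfuncrel}. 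Combining this with $1/F(-x) = 1 - \Pi(-x)$ and comparing coefficients gives $\pi_g = (-1)^{g+1}E_g$, which is the second formula.
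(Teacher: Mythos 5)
Your proposal is correct, but there is nothing in the paper to compare it against: \cref{thm:actual_bizley} is stated as a citation of Bizley's 1954 results and is not proved here (indeed, the Conclusion lists a direct combinatorial proof of it as an open problem). What you have written is essentially a sound self-contained reconstruction of Bizley's generating-function argument. The individual steps all check out: the unique factorization of a no-flaw path at its boundary contacts gives $F=1/(1-\Pi)$; the cycle-lemma computation is handled correctly, since in an orbit of size $d$ (primitive period $d$, with $n=dt$) the height walk has $t M'$ minimum positions giving $M'$ distinct no-flaw rotations, each with $m(P)=tM'$ primitive segments, so the orbit's total $1/m$-weight is $M'\cdot\frac{1}{tM'}=\frac{d}{n}=\frac{|O|}{n}$, and summing over orbits yields $\frac1n\binom{n}{ia}=c_i$; expanding $\exp\bigl(\sum_i c_i x^i\bigr)$ as $\prod_i\sum_{m_i}\frac{c_i^{m_i}}{m_i!}x^{im_i}$ recovers $H_g$ via \eqref{def:C_lambda} and \eqref{eq:Hg_def}; the $180^\circ$ rotation identifying max-flaw paths with primitive no-flaw paths is exactly the observation the paper itself makes in \cref{sec:second_setting}; and the sign bookkeeping $\sum_g E_g x^g = \exp\bigl(\sum_i(-1)^{i+1}c_ix^i\bigr)=1/F(-x)=1-\Pi(-x)$ correctly yields $\pi_g=(-1)^{g+1}E_g$, consistent with \eqref{eq:symfuncrel}. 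The one point you rightly flag as delicate, the periodic orbits, is precisely where the $1/m(P)$ weighting is needed, and your treatment of it is the standard and correct one. In short: the argument is valid and fills a gap the paper deliberately leaves to the literature, though it is a reconstruction of Bizley's route (generating functions plus the cycle lemma) rather than the ``direct combinatorial proof'' the authors ask for.
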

Using \cref{cor:blocks}, we then obtain the value of $\mu_0(g)$ and of~$\mu_{g-1}(g)$.

\begin{cor}[Value of $\mu_0(g)$ and $\mu_{g-1}(g)$]
\phantomsection
\label{thm:bizley}
    We have that 
    \begin{align}
        \mu_0(g) &= {H}_g,\label{eq:biz_Hg}\\
        \mu_{g-1}(g) &= (-1)^{g+1}{E}_g.\label{eq:biz_Eg}
    \end{align}
\end{cor}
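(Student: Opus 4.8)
The plan is to derive both identities by combining the definition of $\mu_j(g)$ in \eqref{def:mu_j} with Bizley's \cref{thm:actual_bizley} and the block structure of \cref{cor:blocks}; the whole argument is index bookkeeping and introduces no new ideas.

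For \eqref{eq:biz_Hg}, I would specialize the definition \eqref{def:mu_j} at $j=0$, obtaining $\mu_0(g) = |N_{0\cdot(a+b)}(g)| = |N_0(g)|$, and then invoke the first identity \eqref{eq:biz:weak} of Bizley to conclude that this equals $H_g$. No appeal to \cref{cor:blocks} is even needed here, since $\mu_0(g)$ is literally $|N_0(g)|$ by definition.

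For \eqref{eq:biz_Eg}, the idea is to locate the max-flaw count $k = g(a+b)-1$ inside the top block. First I would verify that $k$ lies in the range $(g-1)(a+b)\le k < g(a+b)$ indexed by $j=g-1$ in \cref{cor:blocks}: the upper bound $g(a+b)-1 < g(a+b)$ is immediate, while the lower bound follows from $(g-1)(a+b) = g(a+b)-(a+b) \le g(a+b)-1$, which holds because $a+b\ge 1$. \cref{cor:blocks} then gives $|N_{g(a+b)-1}(g)| = \mu_{g-1}(g)$, and substituting Bizley's second identity from \cref{thm:actual_bizley} for the left-hand side yields $\mu_{g-1}(g) = (-1)^{g+1}E_g$, which is \eqref{eq:biz_Eg}.

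Since every step is a direct substitution into a previously established identity, I do not expect any genuine obstacle. The only point requiring (trivial) care is confirming that the extremal index $g(a+b)-1$ falls within the block $j=g-1$ rather than landing on a block boundary or outside the valid range, and this is guaranteed by the positivity of $a$ and $b$.
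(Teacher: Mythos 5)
Your proposal is correct and matches the paper's (implicit) argument exactly: the paper simply states that the corollary follows from \cref{cor:blocks} applied to \cref{thm:actual_bizley}, and you have supplied precisely that derivation, including the routine check that $k=g(a+b)-1$ lies in the block indexed by $j=g-1$.
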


We can now give a closed form expression for the value of~$\mu_j(g)$, which we remark is a truncated version of the sum from~\eqref{eq:symfuncrel}. 
This is our main enumerative result, and is the central consequence of \cref{thm:set_sizes}.

\begin{thm}[Path enumeration formula]
\phantomsection
\label{thm:path_counting_formula}
    We have
    \begin{equation}
         \mu_j(g) = \sum_{k=0}^j (-1)^k
        {E}_k {H}_{g-k}
        \quad \mbox{for $0 \le j < g$.}
    \end{equation}
\end{thm}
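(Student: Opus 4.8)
The plan is to prove \cref{thm:path_counting_formula} by induction on~$j$, using the recurrence of \cref{cor:rec} together with the two extremal evaluations supplied by \cref{thm:bizley}. The base case $j=0$ is immediate: the claimed sum collapses to its single term $(-1)^0 E_0 H_g$, which equals $H_g$ since $E_0=1$ by~\eqref{eq:E0H0}, and this is exactly $\mu_0(g)$ by~\eqref{eq:biz_Hg}.

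For the inductive step, I would fix $j$ with $0<j<g$ and assume the formula holds for $\mu_{j-1}(g)$. Rearranging the recurrence of \cref{cor:rec} gives
\[
    \mu_j(g) = \mu_{j-1}(g) - \mu_0(g-j)\,\mu_{j-1}(j),
\]
and I would evaluate the two factors of the subtracted product separately. The first is handled at once by~\eqref{eq:biz_Hg}, namely $\mu_0(g-j)=H_{g-j}$. The crucial observation is that the second factor $\mu_{j-1}(j)$ is itself a max-flaws count for the smaller endpoint $(ja,jb)$: by~\eqref{def:mu_j} we have $\mu_{j-1}(j)=|N_{(j-1)(a+b)}(j)|$, and \cref{cor:blocks} (applied with parameter $j$ in place of~$g$) shows this value is constant across the block $(j-1)(a+b)\le k<j(a+b)$, which contains the max-flaws index $j(a+b)-1$. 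Hence $\mu_{j-1}(j)=|N_{j(a+b)-1}(j)|$, and~\eqref{eq:biz_Eg} then yields $\mu_{j-1}(j)=(-1)^{j+1}E_j$.

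Substituting these two evaluations gives
\[
    \mu_j(g) = \mu_{j-1}(g) - H_{g-j}\,(-1)^{j+1}E_j = \mu_{j-1}(g) + (-1)^{j}E_j H_{g-j},
\]
after which I would apply the induction hypothesis $\mu_{j-1}(g)=\sum_{k=0}^{j-1}(-1)^k E_k H_{g-k}$ and absorb the trailing summand as the $k=j$ term, completing the induction. Note that throughout the step the indices satisfy $1\le g-k$, so every $H_{g-k}$ that appears has a genuine positive argument and no convention for $H_0$ is needed.

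I do not anticipate a genuine obstacle, since the substantive combinatorial work is already contained in the bijection underlying \cref{cor:rec}. The one point requiring care is the identification $\mu_{j-1}(j)=(-1)^{j+1}E_j$: one must recognise that the ``interior'' quantity $\mu_{j-1}(j)$ secretly records the number of paths with \emph{max} flaws to $(ja,jb)$, so that the second half of Bizley's theorem applies. As a consistency check, taking $j=g-1$ and comparing the formula with the independently known value $\mu_{g-1}(g)=(-1)^{g+1}E_g$ is precisely the symmetric-function identity~\eqref{eq:symfuncrel}, which is why the result reads naturally as a truncation of that relation.
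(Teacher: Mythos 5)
Your proposal is correct and follows essentially the same route as the paper: the paper likewise combines the recurrence of \cref{cor:rec} with the Bizley evaluations \eqref{eq:biz_Hg} and \eqref{eq:biz_Eg} to get $\mu_j(g)=\mu_{j-1}(g)+(-1)^jE_jH_{g-j}$ and then sums by an (implicit) induction with base case $\mu_0(g)=E_0H_g$. The only difference is cosmetic: you re-derive the identification $\mu_{j-1}(j)=(-1)^{j+1}E_j$ from \cref{cor:blocks} and \cref{thm:actual_bizley}, whereas the paper has already packaged exactly that step as \eqref{eq:biz_Eg} in \cref{thm:bizley}.
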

\begin{proof}
    The proof is by induction on~$j$. The base case
    $j=0$ is given by \eqref{eq:E0H0} and~\eqref{eq:biz_Hg}.
    Assume that the formula holds for all cases up to $j-1$, where $0 < j < g$.
    It follows from \cref{cor:rec} and \eqref{eq:biz_Hg} and \eqref{eq:biz_Eg} that
        \[
        \mu_j(g) = \mu_{j-1}(g) + (-1)^{j}{E}_{j}{H}_{g-j}.
        \]
    The inductive hypothesis then gives
    \begin{align}
        \mu_j(g)
        &= \sum_{k=0}^{j-1} (-1)^{k}{E}_{k}{H}_{g-k} + (-1)^{j}{E}_{j}{H}_{g-j}\\
        &= \sum_{k=0}^j (-1)^{k}{E}_{k}{H}_{g-k}, \label{eq:hook}
    \end{align}
    so case $j$ holds and the induction is complete.

\end{proof}

\begin{remark}
    The expression \eqref{eq:hook} can be written more compactly as a specialization of a Schur function via the identity $\sum_{k=0}^j (-1)^k e_k h_{g-k} = (-1)^j s_{\langle 1^j (g-j)^1 \rangle}$, where $s_{\langle 1^j (g-j)^1 \rangle}$ is the Schur function indexed by the hook partition $\langle 1^j (g-j)^1 \rangle$.  See \cite[Section I.3, Example 9]{macdonald}.
    \lrcornerqed
\end{remark}

\subsection{Background on rational Catalan combinatorics}
\label{sec:ratCat}

In the case $a=b=1$, the set $W_0(g)$ comprises all lattice paths from $(0,0)$ to~$(g,g)$ having no flaws. Such paths are known as \textit{Dyck paths}, and a classical result states that they are counted by the $g^{\rm th}$ \textit{Catalan number}
\begin{align}\label{eqn:Catalan}
C_g := \frac{1}{g+1}\binom{2g}{g}.
\end{align}
The \textit{rational Catalan numbers} \cite{rational_assoc-arm} are given by 
\begin{equation}
\label{eqn:rationalCatalan}
C_{a,b} := \frac{1}{a+b}\binom{a+b}{a},
\end{equation}
where $a,b$ are restricted to be coprime so that $C_{a,b}$ is always an integer. The quantity $C_{a,b}$ is equal to $c_1$ as defined in~\eqref{def:ici}. 
The specialization $C_{g,g+1}$ equals the Catalan number~$C_g$.
For a positive integer~$b$, the specialization $C_{g,gb+1}$ gives the 
\textit{Fuss--Catalan number}
\begin{align}
\label{eqn:fuss_catalan}
F_g^b \coloneq\frac{1}{gb+1}\binom{(b+1)g}{g}.    
\end{align}

Dyck paths are one of the many combinatorial objects counted by the Catalan numbers; 
at least 213 other combinatorial objects are also counted by these numbers~\cite{stanley-cat}, including binary trees and noncrossing partitions. 
The Fuss--Catalan numbers count common generalizations of Catalan objects~\cite{fuss}, including lattice paths from $(0,0)$ to $(g,gb)$ having no flaws with respect to the linear boundary of integer slope~$b$ (see \cite{cycle_lemma_og} for an argument expressed using  ballot sequences, or \cite{goulden_serrano} for a combinatorial proof using a construction akin to the \textit{reflection principle});
$b$-ary trees, which are of much interest in computer science (see for example~\cite{banderier} and its references); and certain generalizations of noncrossing partitions. 

The term \textit{rational Catalan combinatorics} is traditionally used to describe the study of objects counted by the rational Catalan numbers $C_{a,b}$ for general coprime~$a,b$. 
This includes the enumeration of \textit{rational Dyck paths}, namely lattice paths from $(0,0)$ to $(a,b)$ having no flaws with respect to the linear boundary of rational slope~$b/a$; 
rational associahedra and noncrossing matchings~\cite{rational_assoc-arm}; and quantities associated with rational parking functions~\cite{rat-armstrong}.  Fundamental statistics such as the area and dinv statistics on Dyck paths have also been generalized to the rational case~\cite{rat-vazirani}.  
Rational Dyck paths have been connected to many different objects through a new framework involving the \textit{signature} of coprime integers $a,b$~\cite{sig-ceb}.

Lattice paths from $(0,0)$ to $(ga,gb)$ having no flaws can be considerably more difficult to count in the case $g>1$ (when the co-ordinates of the endpoint are not coprime) than in the case $g=1$ (when they are coprime): for example, the expression $H_g$ for
$|W_0(g)|$ in \cref{thm:actual_bizley} is complicated to evaluate
in the case $g>1$, but simplifies to $C_{a,b}$ in the case $g=1$.
Though such paths are not necessarily counted by rational Catalan numbers, some authors consider these more general path-counting problems to be part of rational  Catalan combinatorics whereas others use the term \textit{rectangular Catalan combinatorics} \cite{open-berg}.
Similar complications in moving from the case $g=1$ to the case $g>1$ occur elsewhere, for example when relating results between rational Dyck paths and simultaneous core partitions (see \cite{anderson} and \cite{rat_non_coprime-vaz}), and 
in the study of parking functions and triangular partitions~\cite{park-berg, triangular-berg} involving a substantial generalization of Bizley's \cref{thm:actual_bizley} to symmetric functions.

\subsection{Counting paths that cross a linear boundary}

\label{sec:linbound}

The novelty of our work is in the combination of rational Catalan combinatorics with  the counting of paths that contain flaws.
To give further context, we briefly review the literature on the enumeration of lattice paths that cross a linear boundary.

The path enumeration setting we consider involves paths with step set $\{(1,0), (0,1)\}$ in the two-dimensional lattice~$\mathbb{Z}^2$; a boundary line joining the startpoint $(0,0)$ of a path to its endpoint~$(ga,gb)$, where $g,a,b$ are positive integers such that $a$ and $b$ are coprime; and $k$ flaws. 
Previous authors have defined a flaw differently from us, namely as a certain type of \underline{step} (usually a $(0,1)$ step) of the path that lies above the boundary.
Each of the references \cite{chen_CF_revisited,CF_cyc_shift_bdy,huq,tirrell, lattice_4_steps_flaws} adopts this step-based definition of flaw and a notion of the ``wrong'' side of the boundary, although the precise definition is not identical in all five references.

In the more general case that we consider here, where the value of $b/a$ need not necessarily be an integer, the definition of a flaw as a step is no longer appropriate since some steps can lie only partially above the boundary (see \cref{fig:step_issue}).
Our definition of a flaw, as a lattice \underline{point} of the path that lies above the boundary, does not have this ambiguity.

Note that these two definitions of flaws are genuinely different: 
\cref{fig:flaw_nonequivalent_definitions} shows that even in the case $a=b=1$ there is no simple relationship between the number of $(0, 1)$ steps lying above the boundary and the number of lattice points lying above the boundary.
In \cref{fig:all_six_lattice_paths} we further illustrate the difference between these two definitions of flaws when $g=2$ and $a=b=1$, which is the smallest non-trivial case of \cref{cor:blocks}. 

\begin{figure}[ht]
\definecolor{BurntOrange}{RGB}{235, 119, 52}
\definecolor{vertical_flaws}
{RGB}{235, 119, 52}
\centering
    \begin{tikzpicture}[scale = 0.7]
    \draw[gray!30] (0,0) grid (8,6);
    \draw[thick,red!60] (0,0) -- (8,6);
    \draw[ultra thick, blue] (0,0)  -- (1,0);
    \draw[ultra thick, blue] (1,2) -- (2,2);
    \draw[ultra thick, blue] (2,3) -- (3,3) -- (4,3);
    \draw[ultra thick, blue] (4,4) -- (5,4);
    \draw[ultra thick, blue] (5,4) -- (6,4);
    \draw[ultra thick, green] (6,4) -- (6,5);
    \draw[ultra thick, blue] (6,5) -- (7,5) -- (8,5) -- (8,6);
    \draw[ultra thick, green] (1,0) -- (1,1);
    \draw[ultra thick, vertical_flaws] (1,1) -- (1,2);
    \draw[ultra thick, vertical_flaws] (2,2) -- (2,3);
    \draw[ultra thick, vertical_flaws] (4,3) -- (4,4);
    \fill[red]	    (0,0) circle (1.5pt)
    			(4,3) circle (1.5pt)
    			(8,6) circle (1.5pt);
    \fill[orange]	(1,1) circle (3pt)
    			(1,2) circle (3pt)
                    (2,2) circle (3pt)
    			(5,4) circle (3pt)
                    (2,3) circle (3pt)
    			(3,3) circle (3pt)
                    (6,5) circle (3pt)
                    (4,4) circle (3pt);
    \end{tikzpicture}
\caption{A path with a boundary whose slope is not an integer. This shows that both $(0,1)$ steps and $(1,0)$ steps can lie partially above and below the boundary simultaneously, whereas lattice points cannot.}
\label{fig:step_issue}
\end{figure}
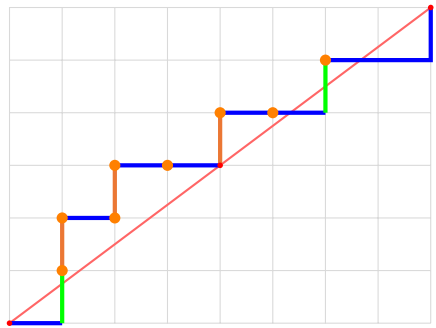

\begin{figure}[ht]
\centering
\begin{subfigure}{.4\textwidth}
  \centering
    \begin{tikzpicture}[scale = 0.8]
    \tikzmath{
    int \a, \b, \g;
    \a = 1;
    \b = 1;
    \g = 4;
    }
    \draw[gray!30] (0,0) grid (\g * \a ,\g * \b);
    \draw[thick,red!60] (0,0) -- (\g * \a ,\g * \b);
    \draw[ultra thick, blue] (0,0)  -- (1,0) -- (1,1);
    \draw[ultra thick, orange] (1,1) -- (1,2) -- (1,3);
    \draw[ultra thick, blue] (1,3) -- (2,3) -- (3,3) -- (4,3) -- (4,4);
    \foreach \i in {0,...,\g}
        \fill[red] (\a * \i,\b * \i) circle (1.5pt);
    \fill[orange]	    
        (1,2) circle (3pt)
        (1,3) circle (3pt)
        (2,3) circle (3pt);
    \end{tikzpicture}
    \caption{A path in which two $(0,1)$ steps and three lattice points lie above the boundary.}
  \label{fig:flaw_nonequivalent_definitions1}
\end{subfigure}%
\qquad 
\begin{subfigure}{.4\textwidth}
\centering
\begin{tikzpicture}[scale = 0.8]
    \tikzmath{
    int \a, \b, \g;
    \a = 1;
    \b = 1;
    \g = 4;
    }
    \draw[gray!30] (0,0) grid (\g * \a ,\g * \b);
    \draw[thick,red!60] (0,0) -- (\g * \a ,\g * \b);
    \draw[ultra thick, blue] (0,0)  -- (1,0) -- (1,1);
    \draw[ultra thick, orange] (1,1) -- (1,2);
    \draw[ultra thick, blue] (1,2) -- (2,2);
    \draw[ultra thick, orange] (2,2) -- (2,3);
    \draw[ultra thick, blue] (2,3) -- (3,3) -- (4,3) -- (4,4);
    \foreach \i in {0,...,\g}
        \fill[red] (\a * \i,\b * \i) circle (1.5pt);
    \fill[orange]	    
        (1,2) circle (3pt)
        (2,3) circle (3pt);
    \end{tikzpicture}
    \caption{A path in which two $(0,1)$ steps and two  lattice points lie above the boundary.}
\label{fig:flaw_nonequivalent_definitions2}
\end{subfigure}%
\caption{Even in the case  $a=b= 1$, there is no simple relationship between the number of $(0,1)$ steps lying above the boundary and the number of lattice points lying above the boundary.}
  \label{fig:flaw_nonequivalent_definitions}
\end{figure}
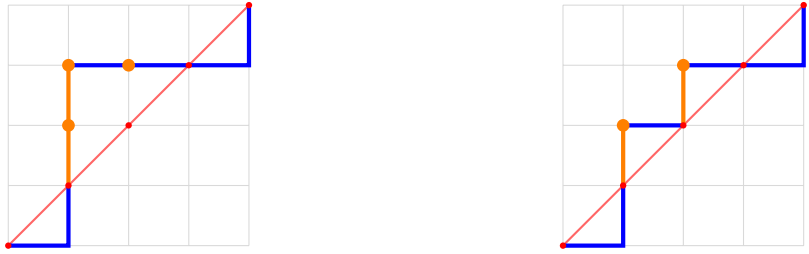 

\begin{figure}[ht]
\centering
    \tikzmath{
    int \a, \b, \g;
    \a = 1;
    \b = 1;
    \g = 2;
    }
    \begin{subfigure}{0.12\textwidth}
      \centering
        \begin{tikzpicture}[scale = 0.8]
        \draw[gray!30] (0,0) grid (\g*\a,\g*\b);
        \draw[thick,red!60] (0,0) -- (\g*\a,\g*\b);
        \draw[ultra thick, blue] (0,0) -- (1,0) -- (2,0) -- (2,1) -- (2,2);
        \foreach \i in {0,...,\g}
            \fill[red] (\a*\i,\b*\i) circle (1.5pt);
        \end{tikzpicture}
      \label{fig:patha}
    \end{subfigure}
\quad
    \begin{subfigure}{0.12\textwidth}
      \centering
        \begin{tikzpicture}[scale = 0.8]
        \draw[gray!30] (0,0) grid (\g*\a,\g*\b);
        \draw[thick,red!60] (0,0) -- (\g*\a,\g*\b);
        \draw[ultra thick, blue] (0,0) -- (1,0) -- (1,1) -- (2,1) -- (2,2);
        \foreach \i in {0,...,\g}
            \fill[red] (\a*\i,\b*\i) circle (1.5pt);
        \end{tikzpicture}
      \label{fig:pathb}
    \end{subfigure}
\quad
    \begin{subfigure}{0.12\textwidth}
      \centering
        \begin{tikzpicture}[scale = 0.8]
        \draw[gray!30] (0,0) grid (\g*\a,\g*\b);
        \draw[thick,red!60] (0,0) -- (\g*\a,\g*\b);
        \draw[ultra thick, blue] (0,0) -- (1,0) -- (1,1) -- (1,2) -- (2,2);
        \foreach \i in {0,...,\g}
            \fill[red] (\a*\i,\b*\i) circle (1.5pt);
        \fill[orange]	    
            (1,2) circle (3pt);
        \end{tikzpicture}
      \label{fig:pathc}
    \end{subfigure}
\quad
    \begin{subfigure}{0.12\textwidth}
      \centering
        \begin{tikzpicture}[scale = 0.8]
        \draw[gray!30] (0,0) grid (\g*\a,\g*\b);
        \draw[thick,red!60] (0,0) -- (\g*\a,\g*\b);
        \draw[ultra thick, blue] (0,0) -- (0,1) -- (1,1) -- (2,1) -- (2,2);
        \foreach \i in {0,...,\g}
            \fill[red] (\a*\i,\b*\i) circle (1.5pt);
        \fill[orange]	    
            (0,1) circle (3pt);
        \end{tikzpicture}
      \label{fig:pathd}
    \end{subfigure}
    \quad
    \begin{subfigure}{0.12\textwidth}
      \centering
        \begin{tikzpicture}[scale = 0.8]
        \draw[gray!30] (0,0) grid (\g*\a,\g*\b);
        \draw[thick,red!60] (0,0) -- (\g*\a,\g*\b);
        \draw[ultra thick, blue] (0,0) -- (0,1) -- (1,1) -- (1,2) -- (2,2);
        \foreach \i in {0,...,\g}
            \fill[red] (\a*\i,\b*\i) circle (1.5pt);
        \fill[orange]	    
            (0,1) circle (3pt)
            (1,2) circle (3pt);
        \end{tikzpicture}
      \label{fig:pathe}
    \end{subfigure}
 \quad
    \begin{subfigure}{0.12\textwidth}
      \centering
        \begin{tikzpicture}[scale = 0.8]
        \draw[gray!30] (0,0) grid (\g*\a,\g*\b);
        \draw[thick,red!60] (0,0) -- (\g*\a,\g*\b);
        \draw[ultra thick, blue] (0,0) -- (0,1) -- (0,2) -- (1,2) -- (2,2);
        \foreach \i in {0,...,\g}
            \fill[red] (\a*\i,\b*\i) circle (1.5pt);
        \fill[orange]	    
            (0,1) circle (3pt)
            (0,2) circle (3pt)
            (1,2) circle (3pt);
        \end{tikzpicture}
      \label{fig:pathf}
    \end{subfigure}
\caption{Set $a=b=1$ and $g=2$.  The six lattice paths from $(0,0)$ to $(2,2)$ are distributed as $(0,0,1,1,2,3)$ according to the number of lattice points above the boundary.  Here $W_0(2) = W_1(2) = \mu_0(2) = 2$ and $W_2(2)= W_3(2) = \mu_1(2) = 1$ (illustrating Corollaries \ref{cor:blocks} and~\ref{cor:decreasing}). 
The six lattice paths are distributed as $(0,0,1,1,2,2)$ according to the number of vertical steps above the boundary (illustrating the Chung--Feller result given in \cref{thm:CF}).}
\label{fig:all_six_lattice_paths}
\end{figure}

We review previous results relating to these two definitions of flaws in Sections~\ref{sec:first_setting} and~\ref{sec:second_setting}.

\subsubsection{Boundaries of integer slope with (0,1) steps as flaws}
\label{sec:first_setting}

In this part of the review, we take flaws to be $(0,1)$ steps of a path from $(0,0)$ to~$(g,gb)$ that lie above the boundary of integer slope~$b$.

Firstly consider paths from $(0,0)$ to~$(g,g)$. The number of such paths having $k=0$ flaws (no $(0,1)$ steps lying above the boundary, which is equivalent to having no lattice points above the boundary) is given by the Catalan number $C_g$ defined in~\eqref{eqn:Catalan}.
Chung and Feller's influential 1949 work~\cite{CF} showed that, remarkably, the same count applies for all~$k$.

\begin{thm}[Chung--Feller {\cite[Theorem 2A]{CF}}]\label{thm:CF}
    Let $k$ satisfy $0 \le k \le g$. Then the number of paths from $(0,0)$ to $(g,g)$ having $k$ of the $(0,1)$ steps lying above the boundary is~$C_g$.
\end{thm}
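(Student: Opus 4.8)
The plan is to translate the problem into one about $\pm 1$ lattice walks and then show that the statistic ``number of $(0,1)$ steps above the boundary'' is \emph{equidistributed} over $\{0,1,\dots,g\}$. Record the height $h=y-x$ along a path from $(0,0)$ to $(g,g)$: each $(0,1)$ step changes $h$ by $+1$ and each $(1,0)$ step by $-1$, so a path becomes a balanced walk $0=h_0,h_1,\dots,h_{2g}=0$ with steps in $\{+1,-1\}$. A $(0,1)$ step lies above the boundary $y=x$ exactly when it starts at a point with $y\ge x$, i.e.\ when the corresponding $+1$ step departs from a height $h_i\ge 0$; call such a step a \emph{high ascent}. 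There are $\binom{2g}{g}$ balanced walks in all, and the number of high ascents of any walk lies in $\{0,1,\dots,g\}$. Hence it suffices to show that the number $f(k)$ of balanced walks with exactly $k$ high ascents is independent of $k$: then $(g+1)f(k)=\sum_k f(k)=\binom{2g}{g}$ forces $f(k)=\frac{1}{g+1}\binom{2g}{g}=C_g$ for every $k$.

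The conceptual heart of the argument is a decomposition lemma. Factor each balanced walk uniquely at its returns to height $0$ into \emph{prime excursions} — the subwalks between consecutive visits to height $0$, each of which stays strictly positive or strictly negative in its interior. I would first check that in a positive prime excursion of semilength $\ell$ every one of its $\ell$ ascents departs from height $\ge 0$ (the first from height $0$, the rest from height $\ge 1$), whereas in a negative prime excursion every ascent departs from height $\le -1$. Consequently the total number of high ascents of a walk equals the combined semilength of its positive prime excursions, while the combined semilength of all its excursions is $g$. This reduces \cref{thm:CF} to showing that, as the walk ranges over all sequences of signed prime excursions of total semilength $g$, the combined semilength of the positive ones is uniform on $\{0,1,\dots,g\}$.

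To finish I would pass to generating functions, which turns the reduced claim into a short identity. Writing $P(t)=\sum_{\ell\ge 1}C_{\ell-1}t^{\ell}$ for the generating function counting prime excursions of a single sign by semilength, the decomposition expresses the bivariate generating function as $F(t,q)=\bigl(1-P(tq)-P(t)\bigr)^{-1}$, where $q$ marks high ascents (so a positive excursion of semilength $\ell$ carries weight $t^{\ell}q^{\ell}$ and a negative one carries weight $t^{\ell}$). Using the defining relation $P=t+P^2$, I would verify the closed form $F(t,q)=\bigl(D(t)-q\,D(tq)\bigr)/(1-q)$ with $D(t)=P(t)/t$ the Catalan generating function; this is precisely the statement $[t^g]F(t,q)=C_g(1+q+\cdots+q^{g})$, and hence $f(k)=C_g$ for all $k$. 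The routine algebra reduces to the cross-multiplied identity $\bigl(P(t)-P(tq)\bigr)\bigl(1-P(t)-P(tq)\bigr)=t(1-q)$, which follows at once from $P(t)^2=P(t)-t$ and $P(tq)^2=P(tq)-tq$.

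The main obstacle is the equidistribution itself: once the excursion lemma is in hand the result is forced, but that lemma does the real work, and its proof must pin down the boundary convention (exactly when a $(0,1)$ step counts as lying ``above'' the line $y=x$) carefully, since the counts $f(k)$ are sensitive to whether steps departing \emph{from} the diagonal are included. A purely bijective alternative — exhibiting, for $0\le k<g$, an explicit map from walks with $k$ high ascents to walks with $k+1$ that shifts one unit of semilength from the negative part of the excursion decomposition to the positive part — would match the combinatorial spirit of the paper, but designing a clean such map that changes the count by exactly one (rather than by the semilength of an entire excursion) is the delicate point; the generating-function route sidesteps this difficulty at the cost of a computation.
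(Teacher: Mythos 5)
Your proof is correct, but note that the paper itself offers no proof of \cref{thm:CF}: it is quoted as background from Chung and Feller \cite{CF}, with a pointer to a bijective proof in \cite{callan1995pair}. Your argument is a self-contained and valid alternative. The excursion lemma is sound: decomposing the $\pm1$ walk at its returns to height $0$, every ascent of a positive prime excursion departs from height $\ge 0$ and no ascent of a negative one does, so the number of high ascents is exactly the combined semilength of the positive excursions; and your convention (a $(0,1)$ step counts as above iff it departs from a point with $y\ge x$) agrees with the one the paper uses implicitly in \cref{fig:flaw_nonequivalent_definitions}. The generating-function finish also checks out: with $P=t+P^2$ one gets $\bigl(P(t)-P(tq)\bigr)\bigl(1-P(t)-P(tq)\bigr)=t(1-q)$, hence $F(t,q)=\bigl(D(t)-qD(tq)\bigr)/(1-q)$ and $[t^g]F=C_g(1+q+\cdots+q^g)$. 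Compared with the two routes the paper gestures at, yours sits in between: Callan's ``pair them up'' argument is purely bijective but essentially shifts an entire excursion across the axis at once, while the paper's own machinery for the analogous point-flaw statement (\cref{thm:set_sizes}, specialised to $g=1$ in \cref{sec:geqaul1}) builds a step-by-step bijection between consecutive flaw counts via cyclic shifts at an HPB or LPA. Your generating-function computation trades the delicacy of designing such a one-unit shift for a two-line algebraic identity, at the cost of not being bijective; as you observe, the only genuinely fragile point is the boundary convention, and you have pinned it down correctly.
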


\cref{thm:CF} can be proven using bijective methods~\cite{callan1995pair}. 
Huq generalized \cref{thm:CF} to paths from $(0,0)$ to~$(g,gb)$ for each positive integer $b$.

\begin{thm}[Huq {\cite[Corollary 5.1.2]{huq}}]\label{thm:huq}
    Let $k$ satisfy $0 \le k \le gb$. Then the number of paths from $(0,0)$ to $(g,gb)$ having $k$ of the $(0,1)$ steps lying above the boundary is
    $F_g^b$, the Fuss--Catalan number 
defined in~\eqref{eqn:fuss_catalan}.
\end{thm}

Further variations on \cref{thm:CF} have been found \cite{CF_cyc_shift_bdy, CF_gen,tirrell}.

\subsubsection{Boundaries of rational slope with lattice points as flaws}
\label{sec:second_setting}

In this part of the review, we take flaws to be lattice points of a path from $(0,0)$ to~$(ga,gb)$ that lie strictly above the boundary. The number of these flaws is the measure $k$ used for $W_k(g)$ in \cref{def:path_sets}.

In 1950, Grossman~\cite{grossman1950paths} conjectured an explicit formula for the number of paths from $(0,0)$ to $(ga,gb)$ 
that may touch, but never rise above, the boundary $ay = bx$.
In our terminology, such paths have no flaws and so are counted by $|W_0(g)|$.
In~1954, Bizley \cite[Eq.~(10)]{biz} proved Grossman's formula using generating functions. 
Bizley \cite[Eq.~(8)]{biz} also obtained an explicit formula for the number of paths 
that lie wholly below, and do not touch, the boundary $ay=bx$ at an interior boundary point.
In other words, such paths have neither flaws nor interior boundary points.
Since the set of such paths is in bijection with the set of paths having max flaws (via rotation), this second result of Bizley's gives the value $|W_{g(a+b)-1}(g)|$. The values $|W_0(g)|$ and $|W_{g(a+b)-1}(g)|$ are stated in \cref{thm:actual_bizley}. 

One of the methods used by Bizley to prove \cref{thm:actual_bizley} involves cyclic rotation of paths. Such methods, in particular the \textit{Cycle Lemma}, have been frequently used to count lattice paths and have been popularized by many authors (see for example \cite{cycle_lemma_applications,cycle_lemma_og}).  Although we were able to use arguments similar to the Cycle Lemma to obtain results in certain special cases, we were not able to use it to obtain our general result.

\section{Special cases and example of the main enumerative result}
In this section we examine various special cases of the path enumeration formula of \cref{thm:path_counting_formula}, and give an example of its application. 

We firstly state the special case $g=1$.

\begin{thm}[Special case $g=1$]
\label{thm:fir_mar_rat}
We have
    \begin{equation}
        |W_k(1)| = \frac{1}{a+b}\binom{a+b}{a}
       \quad
         \mbox{for all $k$ satisfying $0 \le k < a+b$}.
    \end{equation}
\end{thm}
\begin{proof}
Let $k$ satisfy $0 \le k < a+b$.
By \cref{cor:blocks,thm:bizley} with $g=1$, we have
\begin{align}
|W_k(1)| = \mu_0(1) =  H_1 \label{eqn:H1}.
\end{align}
Now by \eqref{eq:Hg_def} and \eqref{def:C_lambda} we have
\[
H_1 = c_{\langle 1^1 \rangle} = c_1.
\]
Substitute into \eqref{eqn:H1} and use \eqref{def:ici} to give the result.
\end{proof}

\cref{thm:fir_mar_rat} shows that $|W_k(1)|$ is independent of~$k$ when $g=1$.
We highlight this special case because our general construction used to prove \cref{thm:set_sizes} simplifies greatly in the case $g=1$.  We re-examine this special case in \cref{sec:geqaul1}.

We next give the special case $g=2$.

\begin{thm}[Special case $g=2$]\label{thm:speccase2}
We have
\begin{equation}
|W_k(2)| = \begin{cases}  
    \displaystyle{\frac{1}{2(a+b)}\left[ \binom{2a+2b}{2a}+\frac{1}{a+b}\binom{a+b}{a}^2 \right]} &
     \mbox{for all $k$ satisfying $0 \le k < a+b$},\label{eq:bizg2} \\[3ex]
    \displaystyle{\frac{1}{2(a+b)}\left[ \binom{2a+2b}{2a}-\frac{1}{a+b}\binom{a+b}{a}^2 \right]} &
    \mbox{for all $k$ satisfying $a+b \le k < 2(a+b)$}.
    \end{cases}
\end{equation}
\end{thm}
\begin{proof}
By \cref{cor:blocks} with $g=2$,
\begin{align}
|W_k(2)| &= \begin{cases}  
     \mu_0(2) & \mbox{for $0 \le k < a+b$}, \\
     \mu_1(2) & \mbox{for $a+b \le k < 2(a+b),$}
    \end{cases}\\
        &= \begin{cases}  
     H_2 & \mbox{for $0 \le k < a+b$}, \\
     -E_2 & \mbox{for $a+b \le k < 2(a+b)$} 
     \label{eqn:H2E2}
    \end{cases}
\end{align}
using \cref{thm:bizley}.
Now by \eqref{eq:Hg_def} and \eqref{def:C_lambda} we have
\[
H_2 = c_{\langle 2^1 \rangle} + c_{\langle 1^2 \rangle} = c_2 + \tfrac{1}{2}c_1^2,
\]
and by \eqref{eq:Eg_def} and \eqref{def:C_lambda} we have
\[
-E_2 = c_{\langle 2^1 \rangle} - c_{\langle 1^2 \rangle} = c_2 - \tfrac{1}{2}c_1^2.
\]
Substitute into \eqref{eqn:H2E2} and use \eqref{def:ici} to give the result.
\end{proof}

We now consider the special case $a=1$, when the slope of the boundary is the positive integer~$b$ and the startpoint and endpoint are $(0,0)$ and $(g,gb)$, respectively. Although \cref{thm:path_counting_formula} already provides an expression for $\mu_j(g)$ in this case, we now derive an alternative formula involving Fuss--Catalan numbers that appears to be simpler.

\begin{thm}[Alternative formula for $a=1$]
    \label{thm:path_counting_aisone_case}
        Let $a=1$. Then for all $j$ satisfying $0 \leq j < g$,
           \begin{equation}
		\label{eqn:a=1}
                |W_{j(b+1)}(g)| = |W_{j(b+1)+1}(g)| = \cdots = |W_{j(b+1)+b}(g)| = \mu_j(g) = \sum_{\myatop{i_1 + \cdots + i_{b+1} = g-1}{i_{b+1} \leq g-1-j}} F_{i_1}^b  \cdots F_{i_{b+1}}^b
            \end{equation}
where $F_i^b$ is the Fuss--Catalan number 
defined in~\eqref{eqn:fuss_catalan}.
\end{thm}
   
\begin{proof}
Fix the positive integer~$g$. All equalities of \eqref{eqn:a=1} except the last hold by \cref{cor:blocks}. 
We shall establish the last equality by induction on~$j$.
We shall describe the structure of the induction by reference to \cref{table:mu_j_unique_solution}, numbering table rows from $0$ and columns from~$1$.

Let $P_j$ be the statement that 
\[
\mu_j(t) = \sum_{\myatop{i_1 + \cdots + i_{b+1} = t-1}{i_{b+1} \leq t-1-j}} F_{i_1}^b  \cdots F_{i_{b+1}}^b \quad \mbox{for all $t$ satisfying $j+1 \le t \le g$}.
\]
In other words, $P_j$ is the statement that 
 the last equality of \eqref{eqn:a=1} holds for all entries in row $j$ of \cref{table:mu_j_unique_solution} up to and including column~$g$.
We prove the theorem by showing by induction on $j$ that $P_j$ holds for $0 \le j < g$: that is, for all entries in rows $0, 1, \dots, g-1$ of \cref{table:mu_j_unique_solution} up to and including column~$g$. 

For the base case $P_0$, let $t$ satisfy $1 \le t \le g$. We require that
\[
\mu_0(t) = \sum_{i_1 + \cdots + i_{b+1} = t-1} F_{i_1}^b  \cdots F_{i_{b+1}}^b. 
\]
This is equivalent to showing that $\mu_0(t) = F_t^b$, using the identity $F_t^b = \sum_{i_1 + \cdots + i_{b+1} = t-1} F_{i_1}^b \cdots F_{i_{b+1}}^b$;
this identity  can be obtained by applying Lagrange inversion \cite[Theorem 1.2.4]{goul} to the functional equation $F(x) = xF(x)^{b+1} + 1$ satisfied by the generating series $F(x) = \sum_{i\geq0} F_i^b x^i$  (see \cite[p.~362]{graham1994concrete} for the identity and functional equation). 
Since $a=1$, the set $W(t)$ comprises all paths from $(0,0)$ to~$(t,tb)$. As discussed in Section \ref{sec:ratCat},
the number of such paths with zero flaws is~$F_t^b$ and so $\mu_0(t) = F_t^b$ as required. This establishes the base case~$P_0$. 

Now let $j$ satisfy $0 < j < g$ and assume that the case $P_{j-1}$ holds,
so that
\begin{equation}
\label{muj-1t}
\mu_{j-1}(t) = \sum_{\myatop{i_1 + \cdots + i_{b+1} = t-1}{i_{b+1} \leq t-j}} F_{i_1}^b \cdots F_{i_{b+1}}^b \quad \mbox{for all $t$ satisfying $j \le t \le g$}.
\end{equation}
Let $t$ satisfy $j+1 \le t \le g$. Then the recurrence relation of~\cref{cor:rec} gives
\begin{equation}
    \mu_j(t) = \mu_{j-1}(t) - \mu_0(t-j) \, \mu_{j-1}(j) =  \mu_{j-1}(t) - F_{t-j}^b \, \mu_{j-1}(j). \label{eq:mujgnew} 
\end{equation}
Apply the inductive hypothesis~\eqref{muj-1t} to $\mu_{j-1}(t)$ and $\mu_{j-1}(j)$ to deduce that
\begin{align}
    \mu_j(t) &= \sum_{\myatop{i_1 + \cdots + i_{b+1} = t-1}{i_{b+1} \leq t - j}} F_{i_1}^b \cdots F_{i_{b+1}}^b  - F_{t-j}^b \sum_{\myatop{i_1 + \cdots + i_{b+1} = j-1}{i_{b+1} = 0}} F_{i_1}^b \cdots F_{i_{b+1}}^b \\
    &= \sum_{\myatop{i_1 + \cdots + i_{b+1} = t-1}{i_{b+1} \leq t - j}} F_{i_1}^b \cdots F_{i_{b+1}}^b  - \sum_{i_1 + \cdots + i_{b} = j-1} F_{i_1}^b \cdots F_{i_{b}}^b F_{t-j}^b
\end{align}
using that $F_0^b=1$. This gives
\[
    \mu_j(t) = \sum_{\myatop{i_1 + \cdots + i_{b+1} = t-1}{i_{b+1} \leq t-1-j}} F_{i_1}^b \cdots F_{i_{b+1}}^b, 
\]
by expanding the right hand side according to whether $i_{b+1} \le t-1-j$ or $i_{b+1} = t-j$. Therefore the case $P_j$ holds, completing the induction.
\end{proof}

We next give an alternative formula for the special case $a=b=1$ (when the slope of the 
boundary is~$1$) by taking $b=1$ in Theorem~\ref{thm:path_counting_aisone_case}
and noting that the Fuss--Catalan number $F_i^1$ equals the Catalan number~$C_i$,
in order to demonstrate a further simplification. 

\begin{cor}[Alternative formula for $a=b=1$]
    \label{cor:path_counting_aisbisone_case}
        Let $a=b=1$. Then
           \begin{equation}
                |W_{2j}(g)| = |W_{2j+1}(g)| = \mu_j(g) = \sum_{k = j}^{g-1} C_{k}  C_{g-1-k}
                               \quad\mbox{for all $j$ satisfying $0\le j< g$,}
            \end{equation}
                
where $C_i$ is the 
Catalan number defined in \eqref{eqn:Catalan}.
\end{cor}

The reader is invited to compare 
Theorem~\ref{thm:path_counting_aisone_case} with Theorem~\ref{thm:huq}: 
both apply to a boundary of integer slope~$b$, but 
Theorem~\ref{thm:path_counting_aisone_case}
takes flaws to be points above the boundary whereas 
Theorem~\ref{thm:huq}
takes flaws to be $(0,1)$ steps above the boundary.
The count in Theorem~\ref{thm:path_counting_aisone_case} depends on the number of flaws whereas the count in Theorem~\ref{thm:huq} does not.

We now give an example of how to apply \cref{thm:path_counting_formula}.

\begin{example}[Computation using the path enumeration formula]
Let $(a,b) = (3,2)$ and $g=4$. We illustrate the use of the path enumeration formula  \cref{thm:path_counting_formula} to calculate the number $|W_k(4)|$ of paths from $(0,0)$ to $(12,8)$ having $k$ flaws, for each $k$ satisfying $0 \le k < 20$.
By \cref{cor:blocks}, it is sufficient to determine $\mu_j(4)$ for each $j = 0,1,2,3$.

    We begin by listing the  partitions of the integers $1,2,3,4$.
    \begin{align}
        \mbox{Partitions of }4 &: \langle 4^1 \rangle, \ \langle 1^1 3^1\rangle, \ \langle 2^2 \rangle, \ \langle 1^2 2^1 \rangle, \ \langle 1^4 \rangle, \ \\
        \mbox{Partitions of }3 &: \langle 3^1 \rangle, \ \langle 1^1 2^1\rangle, \ \langle 1^3 \rangle, \ \\
        \mbox{Partitions of }2 &:\langle 2^1 \rangle, \ \langle 1^2 \rangle, \ \\
        \mbox{Partitions of }1 &:\langle1^1 \rangle.
    \end{align} 
    Using \eqref{def:ici}, we compute  
    $$
    c_1 = 2, \ c_2 = 21,\ c_3 = \frac{1001}{3}, \ c_4 = \frac{12597}2.
    $$
    Using \eqref{def:C_lambda}, we then compute (for example)
    $$
    c_{\langle 1^2 2^1\rangle} = \left( \frac{c_1^2}{2!}\right) \left( \frac{c_2^1}{1!}\right) = 42, \quad
    c_{\langle 1^3\rangle} = \left( \frac{c_1^3}{3!}\right) = \frac{4}{3}.
    $$
    The full set of $c_\lambda$ values for $\lambda \vdash j$ where $1 \leq j \leq 4$ is
    \begin{alignat}{5}
        & c_{\langle 4^1 \rangle}=\frac{12597}2, \quad && c_{\langle 1^1 3^1\rangle}=\frac{2002}{3}, \quad && c_{\langle 2^2 \rangle}=\frac{441}{2}, \quad && c_{\langle 1^22^1 \rangle}=42, \quad && c_{\langle 1^4\rangle}= \frac23,\\
        & c_{\langle 3^1 \rangle}=\frac{1001}{3}, \quad && c_{\langle 1^1 2^1\rangle}=42, \quad && c_{\langle 1^3 \rangle}=\frac43, \quad && &&\\
        & c_{\langle 2^1 \rangle}= 21, \quad && c_{\langle 1^2 \rangle}=2, \quad && && &&\\
        & c_{\langle1^1 \rangle}=2. \quad  && && && &&
    \end{alignat} 

    Using \eqref{eq:Hg_def} and \eqref{eq:Eg_def}, we next calculate (for example)
    \begin{align}
    {H}_3 &= 
    c_{\langle 3^1 \rangle}
    + c_{\langle 1^1 2^1\rangle}
    +c_{\langle 1^3 \rangle}
    =\frac{1001}{3}+42+\frac43 = 377,\\
    {E}_3 &= 
    (-1)^{3-1}c_{\langle 3^1 \rangle}
    + (-1)^{3-2}c_{\langle 1^1 2^1\rangle}
    +(-1)^{3-3}c_{\langle 1^3 \rangle}
    =\frac{1001}{3}-42+\frac43 = 293.
    \end{align} 
    The full set of ${H}_k$ and ${E}_k$ values is
    \begin{alignat}{2}
        {H}_4 &= 7229,  \qquad &&{E}_4 = -5452,\\
        {H}_3 &= 377, \qquad &&{E}_3 = 293,\\
        {H}_2 &= 23, \qquad &&{E}_2 =-19, \\
        {H}_1 &= 2, \qquad &&{E}_1 = 2, \\
        & &&{E}_0 = 1.
    \end{alignat}
    
    Using \cref{thm:path_counting_formula}, we then determine that 
    \begin{align}
        \mu_0(4) &= 
        {E}_0 {H}_{4} =1 \cdot 7229= 7229\\
        \mu_1(4) &= 
        {E}_0 {H}_{4}-
        {E}_1 {H}_{3} = 1 \cdot 7229 - 2 \cdot 377 = 6475\\
        \mu_2(4) &= 
        {E}_0 {H}_{4}-
        {E}_1 {H}_{3}+ 
        {E}_2 {H}_{2} =  1 \cdot 7229 - 2 \cdot 377- 19 \cdot 23 =6038\\
        \mu_3(4) &= 
        {E}_0 {H}_{4}-
        {E}_1 {H}_{3}+ 
        {E}_2 {H}_{2}-
        {E}_3 {H}_{1}=  1 \cdot 7229 - 2 \cdot 377- 19 \cdot 23- 293 \cdot 2 =5452.
    \end{align}
    (Alternatively, we may use \eqref{eq:biz_Eg} for a more direct calculation of the last value $\mu_3(4) = (-1)^{4+1} {E}_4 = 5452$.)
    
    Using \cref{cor:blocks}, we may now determine the value of $|W_k(4)|$ for each $k$ satisfying $0 \le k <20$. The resulting values agree with the computer enumeration shown in \cref{table:computer_enumeration}. \lrcornerqed
\end{example}

We remark, as noted by Bizley~\cite{biz}, that both $H_g$ and $E_g$ are necessarily integers because of the counting result \cref{thm:actual_bizley}, even though this is not readily apparent from the forms \eqref{def:C_lambda}, \eqref{eq:Hg_def}, and~\eqref{eq:Eg_def}.
We further remark that although the quantity $c_i$ defined in \eqref{def:ici} is not necessarily an integer, it is not difficult to show that $ic_i$ is an integer.

\section{Proof of the main combinatorial result}
\label{sec:bijection}

For convenience, we restate our main combinatorial result here.

{\bf \cref{thm:set_sizes}.}
\textit{Let $g,k$ satisfy $0 \le k < g(a+b)-1$. Then}
    $$
        |W_k(g) \setminus S_k(g)| = |W_{k+1}(g)|.
    $$

\subsection{Proof outline}

We shall prove our main combinatorial result by considering fixed $g,k$ satisfying $0 \le k < g(a+b)-1$ and constructing injective maps
\begin{align}
    \phi &\colon W_k(g) \setminus S_k(g) \to W_{k+1}(g),\\
    \psi &\colon W_{k+1}(g) \to W_k(g) \setminus S_k(g).
\end{align} 
In fact, the map $\psi$ we shall construct is the inverse of~$\phi$, although we shall not require this fact in our proof.
We partition the set $W_k(g) \setminus S_k(g)$ into subsets $X$ and~$Y$, and partition (using a different rule) the set $W_{k+1}(g)$ into subsets $\mathcal{X}$ and~$\mathcal{Y}$. We allow each of the partitioning subsets to be empty.
Using these partitions, we then specify the action of $\phi$ using injective submaps $\phi^X$ and~$\phi^Y$, and the action of $\psi$ using injective submaps $\psi^{\mathcal X}$ and~$\psi^{\mathcal Y}$
(see \cref{fig:phi_construction_new}). 

\begin{figure}[ht]
    \centering
    \begin{tikzpicture}[scale = 0.5]
    \pgfmathsetmacro{\cellHeight}{3}
    \pgfmathsetmacro{\cellWidth}{6}
    \pgfmathsetmacro{\SkGap}{1}
    \pgfmathsetmacro{\XYGap}{1}
    \pgfmathsetmacro{\domainCodomainGap}{4}
    \pgfmathsetmacro{\phiPsiGap}{0.4}
        \draw[black, thick] 
            (0,\cellHeight+\XYGap) rectangle (\cellWidth, 2*\cellHeight+\XYGap);
        \draw[black, thick] 
            (0,0) rectangle (\cellWidth,\cellHeight);
        \draw[black, thick, fill = gray!20] 
            (0,-\SkGap-\cellHeight) rectangle (\cellWidth,-\SkGap);

        \draw[black, thick] 
            (\cellWidth+\domainCodomainGap,\cellHeight+\XYGap) rectangle (2*\cellWidth+\domainCodomainGap,2*\cellHeight+\XYGap);
        \draw[black, thick] 
            (\cellWidth+\domainCodomainGap,0) rectangle (2*\cellWidth+\domainCodomainGap,\cellHeight);
        
        \node (X) at (0.5*\cellWidth,1.5*\cellHeight+\XYGap) {$X$};
        \node (Y) at (0.5*\cellWidth,0.5*\cellHeight) {$Y$};
        \node (S) at (0.5*\cellWidth,-\SkGap-0.5*\cellHeight) {$S_k(g)$};
        
        \node (XX) at (1.5*\cellWidth+\domainCodomainGap,1.5*\cellHeight+\XYGap) {$\mathcal{X}$};
        \node (YY) at (1.5*\cellWidth+\domainCodomainGap,0.5*\cellHeight) {$\mathcal{Y}$};
        
        \draw[->,black] 
            (0.5+\cellWidth,1.5*\cellHeight+\XYGap+0.5*\phiPsiGap) -- (-0.5+\cellWidth+\domainCodomainGap,1.5*\cellHeight+\XYGap+0.5*\phiPsiGap) node 
            [midway, sloped, above] {$\phi   ^X$};
        \draw[->,black] 
            (0.5+\cellWidth,0.5*\cellHeight+0.5*\phiPsiGap) -- (-0.5+\cellWidth+\domainCodomainGap,0.5*\cellHeight+0.5*\phiPsiGap) node 
            [midway, sloped, above] {$\phi   ^Y$};
        \draw[->,black] 
            (-0.5+\cellWidth+\domainCodomainGap,1.5*\cellHeight+\XYGap-0.5*\phiPsiGap) -- (0.5+\cellWidth,1.5*\cellHeight+\XYGap-0.5*\phiPsiGap) node 
            [midway, sloped, below] {$\psi   ^\mathcal X$};
        \draw[->,black] 
            (-0.5+\cellWidth+\domainCodomainGap,0.5*\cellHeight-0.5*\phiPsiGap) -- (0.5+\cellWidth,0.5*\cellHeight-0.5*\phiPsiGap) node 
            [midway, sloped, below] {$\psi   ^\mathcal Y$};

        \node[anchor = east] (Nk) at (-1,\cellHeight+0.5*\XYGap){$W_k(g)$};
        \node[anchor = west] (Nk1) at (1+2*\cellWidth+\domainCodomainGap,\cellHeight+0.5*\XYGap){$W_{k+1}(g)$};
        
        \pgfmathsetmacro{\aspectNumber}{(2*\cellHeight+\SkGap+0.5*\XYGap)/(3*\cellHeight+\SkGap+\XYGap)}
        \draw [thick, decorate,
            decoration = {brace, raise = 6pt, 
            aspect = \aspectNumber, amplitude = 8pt}] 
            (0,-\cellHeight-\SkGap) --  (0,2*\cellHeight+\XYGap);
        \draw [thick, decorate,
            decoration = {brace, mirror, 
            raise = 6pt, amplitude = 8pt}] 
                (2*\cellWidth+\domainCodomainGap,0) --  (2*\cellWidth+\domainCodomainGap,2*\cellHeight+\XYGap);
    \end{tikzpicture}
    \caption{%
        The map $\phi    \colon W_k(g) \setminus S_k(g) \to W_{k+1}(g)$ is defined piecewise using the maps $\phi^X: X \to \mathcal X$ and~$\phi^Y: Y \to \mathcal Y$. 
        The map $\psi    \colon W_{k+1}(g) \to W_k(g) \setminus S_k(g)$ is likewise defined piecewise using the maps $\psi^\mathcal X: \mathcal X \to X$ and~$\psi^\mathcal Y: \mathcal Y \to Y$.}
    \label{fig:phi_construction_new}
\end{figure}

To prove \cref{thm:set_sizes}, it suffices to 
\begin{enumerate}
    \item
        specify the partition of $W_k(g)\setminus S_k(g)$ and of $W_{k+1}(g)$ as illustrated in \cref{fig:phi_construction_new}, and 
    \item
        define the maps $\phi$ and~$\psi$, and show that they are both injective.
\end{enumerate}

\subsection{Partition of sets $W_k(g)\setminus S_k(g)$ and $W_{k+1}(g)$}
\label{subsec:partition}

We first introduce some additional terminology.  Recall that the boundary of a path in $W(g)$ is the line from $(0,0)$ to $(ga, gb)$.

\begin{definition}[Elevation]
\label{def:elev}
Let $(i,j)$ be a point of a path in $W(g)$.  The \textit{elevation} of $(i,j)$ is~${ja - ib}$.~\lrcornerqed
\end{definition}

The elevation of a point of a path in $W(g)$ is a measure of the directed distance from the point to the boundary. 
Points on the boundary have zero elevation; points above the boundary have positive elevation;  points below the boundary have negative elevation.

\begin{definition}[Lowest points above, highest points below]
\label{def:hpblpa}
    Let $p$ be a path.  
    The \textit{lowest points above the boundary (LPAs)} of $p$ are those points of $p$ (if any) 
    attaining the smallest strictly positive elevation.
    The \textit{highest points below the boundary (HPBs)} of $p$ are defined analogously.
\lrcornerqed
\end{definition}

See \cref{fig:subpath_vs_path_1} for an illustration of a path $p$ with LPAs $L,L',L''$ and HPBs $H,H'$. We note that the possible elevation values for an LPA are $1,2,\dots,\mathrm{min}(a,b)$, and that the possible elevation values for an HPB are $-1,-2,\dots,-\mathrm{min}(a,b)$.
Both the number of LPAs and the number of HPBs of a path in $W(g)$ lie in $\{0,1,\dots, g\}$.

We shall often consider a \textit{subpath} $p'$ of a path~$p$, namely a consecutive sequence of steps of~$p$. When viewed as a separate path in its own right, the boundary of $p'$ need not coincide with the boundary of~$p$ (nor even have the same slope) and so its LPAs and HPBs need not necessarily be the same as those of~$p$ (see \cref{fig:subpath_vs_path}). When we wish to view $p'$ as a path in its own right, we shall refer to ``the path~$p'$'';  when we wish to view $p'$ as a part of $p$ we shall refer to ``the subpath~$p'$''.

\cref{def:concatenation} describes the combination of paths $p_1$ and $p_2$ to form the concatenated path $p_1p_2$. To reverse this process, we \textit{split} the path $p_1p_2$ at the endpoint of $p_1$ into component paths $p_1, p_2$. We can similarly split a path at two distinct points to form component paths $p_1, p_2, p_3$. 
If the elevation of the startpoint and endpoint of $p_i$ (viewed as a subpath) are equal, then $p_i$ (viewed as a path) has a boundary with the same slope (the same values of $a$ and~$b$) as the full path.

We make the following key observation about the change in the number of flaws when a path is split at an HPB or LPA and the resulting subpaths are interchanged.

\begin{obs}\label{obs:flaws}
    Let $p$ be a path containing exactly $\beta$ interior boundary points and exactly $\lambda$ LPAs.
    Suppose that $p$ is split at an HPB $H$ into $p_1p_2$, so that $H$ is the endpoint of $p_1$ and the startpoint of~$p_2$. Then the rearranged path $p_2p_1$ has exactly $\beta+1$ more flaws than~$p$, namely all $\beta$ interior boundary points of $p$ together with the endpoint of~$p_2$.
    If instead $p$ is split at an LPA into $p_1p_2$, then the rearranged path $p_2p_1$ has exactly $\lambda$ fewer flaws than~$p$, namely all $\lambda$ LPAs of~$p$.
    \lrcornerqed
\end{obs}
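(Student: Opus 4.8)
The plan is to analyse the cyclic rearrangement $p\mapsto p_2p_1$ entirely through its effect on elevations (\cref{def:elev}), exploiting the fact that a point is a flaw precisely when its elevation is positive. Write $e$ for the elevation of the split point~$H$, and suppose $p$ runs from $(0,0)$ to $(ga,gb)$ with $H=(x,y)$, so $e=ya-xb$. The first step, which I would isolate as the key lemma driving everything, is a direct coordinate computation showing that the rearrangement lowers \emph{every} elevation by the same amount~$e$. Indeed, the points of the $p_2$-block of $p_2p_1$ are the points of $p_2$ translated by $-(x,y)$, and the points of the $p_1$-block are the points of $p_1$ translated by $(ga-x,\,gb-y)$; in either case a short computation gives that the elevation of the image equals the elevation of the original point minus~$e$.

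The delicate bookkeeping concerns the terminal points, and this is where I expect the only real care to be needed. In $p$ the split point $H$ is a single point of elevation~$e$, whereas in $p_2p_1$ it reappears as the \emph{two} terminal points (the startpoint and endpoint, both of elevation~$0$); conversely the two terminal points of $p$ (both of elevation~$0$) are glued into the single junction point of $p_2p_1$, whose elevation is~$-e$. Every remaining point of $p$ — neither terminal nor the split point — corresponds to a non-terminal point of $p_2p_1$ whose elevation is lowered by~$e$. Once this correspondence is pinned down, counting the change in flaws reduces to counting which elevations cross the threshold~$0$, together with tracking the junction and the split point separately.

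For the case where $H$ is an HPB I would use that $e<0$, so every elevation strictly increases and hence no flaw of $p$ is destroyed. A non-terminal point of elevation $v$ becomes a flaw exactly when $e<v\le 0$, and the defining property of an HPB (\cref{def:hpblpa}) — that no point of $p$ has elevation strictly between $e$ and~$0$ — forces $v=0$. Thus the newly created non-terminal flaws are precisely the $\beta$ interior boundary points of~$p$. The only remaining change is the junction point, of elevation $-e>0$, which is a new flaw and is the image of the endpoint of~$p_2$; this supplies the extra~$+1$, for $\beta+1$ new flaws in total.

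For the case where $H$ is an LPA I would argue symmetrically, using $e>0$ so that every elevation strictly decreases and no new flaw can be created. A non-terminal point of elevation $v$ loses flaw status exactly when $0<v\le e$, and the LPA property forces $v=e$; these are the $\lambda-1$ LPAs of $p$ other than~$H$. The one remaining lost flaw is the split point $H$ itself, an LPA of elevation~$e$, which becomes terminal and hence has elevation~$0$ in $p_2p_1$. Together these destroy exactly the $\lambda$ LPAs of~$p$, as claimed. The main obstacle throughout is not the elevation arithmetic, which is routine, but keeping the gluing of terminal points straight so that the extra $+1$ in the HPB case, and the contribution of the split point in the LPA case, are each accounted for exactly once.
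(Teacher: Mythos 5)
Your proof is correct. Note that the paper itself offers no proof of \cref{obs:flaws} at all---it is asserted as an observation and used later without justification---so there is no argument to compare yours against; your elevation-shift computation is exactly the verification the authors leave implicit. The two points that genuinely require care are the ones you isolate: that the single split point $H$ of $p$ reappears as the \emph{two} terminal points of $p_2p_1$ (each of elevation $0$, hence never flaws), while the two terminal points of $p$ are glued into the single junction point of elevation $-e$; and that the extremality in the definition of HPB (resp.\ LPA) is what forces the only threshold-crossing non-terminal points to be exactly the interior boundary points (resp.\ exactly the LPAs). With those accounted for, the flaw counts $\beta+1$ and $-\lambda$ follow, and your identification of the new flaws and destroyed flaws matches the statement precisely.
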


See \cref{fig:cyc_perm_observation} for an illustration of \cref{obs:flaws}.

\begin{figure}[ht]
\begin{center}
    \tikzmath{
    int \a, \b, \g, \n, \m;
    \a = 3;
    \b = 2;
    \g = 3;
    \n = \g*\a;
    \m = \g*\b;
    }
\definecolor{brown2}{RGB}{0,0,0}
\definecolor{subpath_vs_path_HPB_color}{RGB}{252, 0, 181}
\begin{subfigure}{.4\textwidth}
    \begin{tikzpicture}[scale = 0.8]
        \draw[gray!30] (0,0) grid (\g * \a ,\g * \b);
        \draw[thick,red!50] (0,0) -- (\g * \a ,\g * \b);
        \draw[black,line width = 0.8pt, dotted]
            (2-2*2/13, 2+2*3/2*2/13) -- (2+6*2/13, 2-6*3/2*2/13);
        \draw[black,line width = 0.8pt, dotted]
            (8-2*2/13, 6+2*3/2*2/13) -- (8+6*2/13, 6-6*3/2*2/13);
        \draw[ultra thick, blue] 
            (0,0)  -- (1,0) -- (2,0) -- (2,1) -- (2,2);
        \draw[ultra thick, brown2]
            (2,2) -- (3,2) -- (4,2) -- (5,2) -- (5,3) -- (5,4) -- (5,5) -- (6,5) -- (6,6) -- (7,6) -- (8,6);
        \draw[ultra thick, blue] 
            (8,6) -- (9,6);
	\foreach \i in {0,...,\g}
		\fill[red] (\i*\a, \i*\b) circle (2pt);
        \fill[teal]	
            (2,2) circle (3pt)
            (5,4) circle (3pt)
            (8,6) circle (3pt);
        \fill[orange]  
            (3,2) circle (3pt);
        \fill[subpath_vs_path_HPB_color]
            (2,1) circle (3pt)
            (5,3) circle (3pt);
        \draw[shift = {(2.5,4.5)}, color = brown2] node[font=\Large] {$p'$};
        \draw[shift = {(1.5,2.5)}, color = teal] node[font=\Large] {$L$};
        \draw[shift = {(4.5,4.5)}, color = teal] node[font=\Large] {$L'$};
        \draw[shift = {(7.5,6.5)}, color = teal] node[font=\Large] {$L''$};
        \draw[shift = {(3.5,1.5)}, color = orange] node[font=\Large] {$P$};
        \draw[shift = {(2.5,0.5)}, color = subpath_vs_path_HPB_color] node[font=\Large] {$H$};
        \draw[shift = {(5.5,2.5)}, color = subpath_vs_path_HPB_color] node[font=\Large] {$H'$};
        
    \end{tikzpicture}
    \caption{A path~$p$ and (in black) its subpath~$p'$. \phantom{newline newline}}
    \label{fig:subpath_vs_path_1}
\end{subfigure}
\qquad
%
\begin{subfigure}{.4\textwidth}
    \begin{tikzpicture}[scale = 0.8]
        \draw[gray!10] (0,0) grid (\g * \a ,\g * \b);
        \draw[gray!30] (2,2) grid (\g * \a -1 ,\g * \b);
        \draw[thick,red!50] (2,2) -- (\g * \a -1 ,\g * \b);
        \draw[ultra thick, brown2]
            (2,2) -- (3,2) -- (4,2) -- (5,2) -- (5,3) -- (5,4) -- (5,5) -- (6,5) -- (6,6) -- (7,6) -- (8,6);
	\foreach \i in {0,...,\g}
		\phantom{\fill[red] (\i*\a, \i*\b) circle (2pt);}
        \fill[teal]	
            (2,2) circle (3pt)
            (5,4) circle (3pt)
            (8,6) circle (3pt);
        \fill[orange]  
            (3,2) circle (3pt);
        \fill[orange]  
            (6,5) circle (3pt);
        \fill[subpath_vs_path_HPB_color]
            (5,3) circle (3pt);
        \draw[shift = {(2.5,4.5)}, color = brown2] node[font=\Large] {$p'$};
        \draw[shift = {(1.5,2.5)}, color = teal] node[font=\Large] {$L$};
        \draw[shift = {(4.5,4.5)}, color = teal] node[font=\Large] {$L'$};
        \draw[shift = {(7.5,6.5)}, color = teal] node[font=\Large] {$L''$};
        \draw[shift = {(3.5,1.5)}, color = orange] node[font=\Large] {$P$};
        \draw[shift = {(5.5,5.5)}, color = orange] node[font=\Large] {$Q$};
        \draw[shift = {(5.5,2.5)}, color = subpath_vs_path_HPB_color] node[font=\Large] {$H'$};
    \end{tikzpicture}
    \caption{The subpath $p'$ as a path in its own right.}
    \label{fig:subpath_vs_path_2}
\end{subfigure}
\caption{Let $(a,b)=(3,2)$. 
The path $p$ is a member of $W_7(3) \setminus S_7(3)$ containing the interior boundary point $P$ and the LPAs $L, L',L''$ and HPBs $H,H'$. 
The subpath $p'$ contains the same interior boundary point and LPAs as~$p$, as well as the HPB $H'$ of~$p$.
The path $p'$ (on its own) has the same slope as~$p$, but 
is a member of $S_4(2) \subseteq W_4(2)$ containing the boundary points $L, L', L''$, the unique HPB~$P$, and the unique LPA~$Q$. We note that $H'$ is not an HPB of the path~$p'$.
}
\label{fig:subpath_vs_path}
\end{center}
\end{figure}
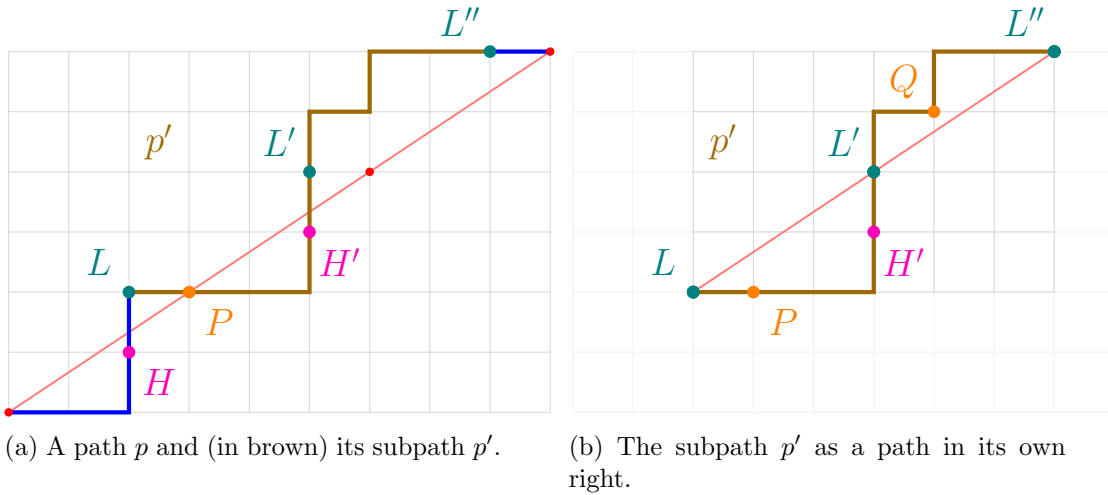

\begin{figure}[ht]
\centering
\begin{subfigure}{.4\textwidth}
  \centering
    \begin{tikzpicture}[scale = 0.6]
    \tikzmath{
    int \a, \b, \g;
    \a = 3;
    \b = 2;
    \g = 3;
    \p = 1;
    \q = 0;
    \d = \b*(\a*\q-\b*\p)/(\b^2+\a^2);
    \r = 1;
    }
    \draw[gray!30] (0,0) grid (\g * \a ,\g * \b);
    \draw[thick,red!60] (0,0) -- (\g * \a ,\g * \b);
    \draw[black,line width = 0.8pt, dotted]
        (\p-\r, \q+\r*\a/\b) -- (\p+\r+2*\d, \q-\r*\a/\b-2*\d*\a/\b);
    \draw[ultra thick, blue] (0,0)  -- (1,0);
    \draw[ultra thick, black]
    (1,0) -- (1,1) -- (1,2) -- (1,3) -- (2,3) -- (3,3) -- (4,3) -- (4,4) -- (5,4) -- (6,4) -- (6,5) -- (6,6) -- (7,6) -- (8,6) -- (9,6);
    \foreach \i in {0,...,\g}
        \fill[red] (\a * \i,\b * \i) circle (1.5pt);
    \fill[orange]	    
        (1,0) circle (5pt);
    \fill[teal]	    
        (1,1) circle (5pt)
        (4,3) circle (5pt);
    \fill[red]	    
        (0,0) circle (5pt)
        (6,4) circle (5pt)
        (9,6) circle (5pt);
    \draw[shift = {(1.5, -0.5)}, color = orange] node[font=\Large] {$H$};
    \draw[shift = {(3.5, 3.5)}, color = teal] node[font=\Large] {$L$};
    \draw[shift = {(0.5, -0.5)}, color = blue] node[font=\Large] {$p_1$};
    \draw[shift = {(5.5, 5.5)}, color = black] node[font=\Large] {$p_2$};
    \phantom{
    \fill[black]	    
        (-1,-1) circle (1pt)
        (\g * \a +1, \g * \b +1) circle (1pt);
    } 
    \end{tikzpicture}
    \caption{Original path with 12 flaws.}
  \label{fig:cyc_perm_observation1}
\end{subfigure}%
\qquad 
\pgfmathsetmacro{\xShiftpOne}{8}%
\pgfmathsetmacro{\yShiftpOne}{6}%
\pgfmathsetmacro{\xShiftpTwo}{-1}%
\pgfmathsetmacro{\yShiftpTwo}{0}%
\begin{subfigure}{.4\textwidth}
\centering
    \begin{tikzpicture}[scale = 0.6]
    \tikzmath{
    int \a, \b, \g;
    \a = 3;
    \b = 2;
    \g = 3;
    \p = 8;
    \q = 6;
    \d = \b*(\a*\q-\b*\p)/(\b^2+\a^2);
    \r = 0.4;
    }
    \draw[gray!30] (0,0) grid (\g * \a ,\g * \b);
    \draw[thick,red!60] (0,0) -- (\g * \a ,\g * \b);
    \draw[black,line width = 0.8pt, dotted]
        (\p-\r, \q+\r*\a/\b) -- (\p+\r+2*\d, \q-\r*\a/\b-2*\d*\a/\b);
    \draw[ultra thick, blue] 
    (\xShiftpOne+0,0+\yShiftpOne)  -- (\xShiftpOne+1,0+\yShiftpOne);
    \draw[ultra thick, black]
    (\xShiftpTwo+1,0+\yShiftpTwo) -- (\xShiftpTwo+1,1+\yShiftpTwo) -- (\xShiftpTwo+1,2+\yShiftpTwo) -- (\xShiftpTwo+1,3+\yShiftpTwo) -- (\xShiftpTwo+2,3+\yShiftpTwo) -- (\xShiftpTwo+3,3+\yShiftpTwo) -- 
    (\xShiftpTwo+4,3+\yShiftpTwo)-- (\xShiftpTwo+4,4+\yShiftpTwo) -- (\xShiftpTwo+5,4+\yShiftpTwo) -- (\xShiftpTwo+6,4+\yShiftpTwo) -- (\xShiftpTwo+6,5+\yShiftpTwo) -- (\xShiftpTwo+6,6+\yShiftpTwo) -- (\xShiftpTwo+7,6+\yShiftpTwo) -- (\xShiftpTwo+8,6+\yShiftpTwo) -- (\xShiftpTwo+9,6+\yShiftpTwo);
    \foreach \i in {0,...,\g}
        \fill[red] (\a * \i,\b * \i) circle (1.5pt);
    \fill[orange]	    
        (1+\xShiftpOne,\yShiftpOne+0) circle (5pt)
        (1+\xShiftpTwo,\yShiftpTwo+0) circle (5pt);
    \fill[teal]	    
        (\xShiftpTwo+1,1+\yShiftpTwo) circle (5pt)
        (\xShiftpTwo+4,3+\yShiftpTwo) circle (5pt);
    \fill[red]	    
        (0+\xShiftpOne,\yShiftpOne+0) circle (5pt)
        (6+\xShiftpTwo,\yShiftpTwo+4) circle (5pt)
        (9+\xShiftpTwo,\yShiftpTwo+6) circle (5pt);
    \draw[shift = {(0.5+\xShiftpOne, \yShiftpOne+ 0.5)}, color = blue] node[font=\Large] {$p_1$};
    \draw[shift = {(5.5+\xShiftpTwo, \yShiftpTwo+ 5.5)}, color = black] node[font=\Large] {$p_2$};
    \phantom{
    \fill[black]	    
        (-1,-1) circle (1pt)
        (\g * \a +1, \g * \b +1) circle (1pt);
    } 
    \end{tikzpicture}
    \caption{Rearranged path with 14 flaws.}
\label{fig:cyc_perm_observation2}
\end{subfigure}%
\\[2em]
\begin{subfigure}{.4\textwidth}
  \centering
    \begin{tikzpicture}[scale = 0.6]
    \tikzmath{
    int \a, \b, \g;
    \a = 3;
    \b = 2;
    \g = 3;
    \p = 4;
    \q = 3;
    \d = \b*(\a*\q-\b*\p)/(\b^2+\a^2);
    \r = 0.5;
    }
    \draw[gray!30] (0,0) grid (\g * \a ,\g * \b);
    \draw[thick,red!60] (0,0) -- (\g * \a ,\g * \b);
    \draw[black,line width = 0.8pt, dotted]
        (\p-\r, \q+\r*\a/\b) -- (\p+\r+2*\d, \q-\r*\a/\b-2*\d*\a/\b);
    \draw[ultra thick, blue] (0,0)  -- (1,0) -- (1,1) -- (1,2) -- (1,3) -- (2,3) -- (3,3) -- (4,3);
    \draw[ultra thick, black]
    (4,3)-- (4,4) -- (5,4) -- (6,4) -- (6,5) -- (6,6) -- (7,6) -- (8,6) -- (9,6);
    \foreach \i in {0,...,\g}
        \fill[red] (\a * \i,\b * \i) circle (1.5pt);
    \fill[orange]	    
        (1,0) circle (5pt);
    \fill[teal]	    
        (1,1) circle (5pt)
        (4,3) circle (5pt);
    \fill[red]	    
        (0,0) circle (5pt)
        (6,4) circle (5pt)
        (9,6) circle (5pt);
    \draw[shift = {(1.5, -0.5)}, color = orange] node[font=\Large] {$H$};
    \draw[shift = {(3.5, 3.5)}, color = teal] node[font=\Large] {$L$};
    \draw[shift = {(0.5, 2.5)}, color = blue] node[font=\Large] {$p_1$};
    \draw[shift = {(5.5, 5.5)}, color = black] node[font=\Large] {$p_2$};
    \phantom{
    \fill[black]	    
        (-1,-1) circle (1pt)
        (\g * \a +1, \g * \b +1) circle (1pt);
    } 
    \end{tikzpicture}
    \caption{Original path with 12 flaws.}
  \label{fig:cyc_perm_observation3}
\end{subfigure}%
\qquad 
\pgfmathsetmacro{\xShiftpOne}{5}%
\pgfmathsetmacro{\yShiftpOne}{3}%
\pgfmathsetmacro{\xShiftpTwo}{-4}%
\pgfmathsetmacro{\yShiftpTwo}{-3}%
\begin{subfigure}{.4\textwidth}
\centering
    \begin{tikzpicture}[scale = 0.6]
    \tikzmath{
    int \a, \b, \g;
    \a = 3;
    \b = 2;
    \g = 3;
    \p = 5;
    \q = 3;
    \d = \b*(\a*\q-\b*\p)/(\b^2+\a^2);
    \r = 0.8;
    }
    \draw[gray!30] (0,0) grid (\g * \a ,\g * \b);
    \draw[thick,red!60] (0,0) -- (\g * \a ,\g * \b);
    \draw[black,line width = 0.8pt, dotted]
        (\p-\r, \q+\r*\a/\b) -- (\p+\r+2*\d, \q-\r*\a/\b-2*\d*\a/\b);
    \draw[ultra thick, blue] 
    (\xShiftpOne+0,0+\yShiftpOne)  -- (\xShiftpOne+1,0+\yShiftpOne) -- (\xShiftpOne+1,1+\yShiftpOne) -- (\xShiftpOne+1,2+\yShiftpOne) -- (\xShiftpOne+1,3+\yShiftpOne) -- (\xShiftpOne+2,3+\yShiftpOne) -- (\xShiftpOne+3,3+\yShiftpOne) -- (\xShiftpOne+4,3+\yShiftpOne);
    \draw[ultra thick, black]
    (\xShiftpTwo+4,3+\yShiftpTwo)-- (\xShiftpTwo+4,4+\yShiftpTwo) -- (\xShiftpTwo+5,4+\yShiftpTwo) -- (\xShiftpTwo+6,4+\yShiftpTwo) -- (\xShiftpTwo+6,5+\yShiftpTwo) -- (\xShiftpTwo+6,6+\yShiftpTwo) -- (\xShiftpTwo+7,6+\yShiftpTwo) -- (\xShiftpTwo+8,6+\yShiftpTwo) -- (\xShiftpTwo+9,6+\yShiftpTwo);
    \foreach \i in {0,...,\g}
        \fill[red] (\a * \i,\b * \i) circle (1.5pt);
    \fill[orange]	    
        (1+\xShiftpOne,\yShiftpOne+0) circle (5pt);
    \fill[teal]	    
        (\xShiftpOne+1,1+\yShiftpOne) circle (5pt)
        (\xShiftpOne+4,3+\yShiftpOne) circle (5pt)
        (\xShiftpTwo+4,3+\yShiftpTwo) circle (5pt);
    \fill[red]	    
        (0+\xShiftpOne,\yShiftpOne+0) circle (5pt)
        (6+\xShiftpTwo,\yShiftpTwo+4) circle (5pt)
        (9+\xShiftpTwo,\yShiftpTwo+6) circle (5pt);
    \draw[shift = {(0.5+\xShiftpOne, \yShiftpOne+2.5)}, color = blue] node[font=\Large] {$p_1$};
    \draw[shift = {(5.5+\xShiftpTwo, \yShiftpTwo+ 5.5)}, color = black] node[font=\Large] {$p_2$};
    \phantom{
    \fill[black]	    
        (-1,-1) circle (1pt)
        (\g * \a +1, \g * \b +1) circle (1pt);
    } 
    \end{tikzpicture}
    \caption{Rearranged path with 10 flaws.}
\label{fig:cyc_perm_observation4}
\end{subfigure}%
\caption{
Rearrangement of the subpaths of a path 
with $\beta =1$ interior boundary points and $\lambda = 2$ LPAs
changes the number of flaws, according to \cref{obs:flaws}. 
Splitting the path at an HPB maps diagram (a) to diagram~(b);
splitting the same path at an LPA maps diagram (c) to diagram~(d).
}
  \label{fig:cyc_perm_observation}
\end{figure}

We now define the subsets $X$ and $Y$ of ${W_k(g) \setminus S_k(g)}$ by reference to an arbitrary path $p \in {W_k(g) \setminus S_k(g)}$.
Split $p$ at its last non-terminal boundary point into $q r$, and regard $q$ and $r$ as paths in their own right.  If $p$ has no interior boundary points, then $p$ is split at its startpoint and $q$ is empty. 
Since $p \notin S_k(g)$, either $q$ has at least one flaw or $r$ has non-max flaws; in the latter case, $r$ has at least one HPB because $p$ splits at its last non-terminal boundary point into $q r$ and so $r$ itself has no interior boundary points.  Therefore exactly one of three cases holds:
\begin{description}
    \item[Case 1:] 
    $q$ has no flaws and $r$ has non-max flaws.
    Then $p \in X$.
        
    \item[Case 2:] 
    $q$ has at least one flaw and $r$ has max flaws.   
    Then $p \in Y$.        
    \item[Case 3:] 
    $q$ has at least one flaw and $r$ has non-max flaws.
    If the LPAs of $q$ are closer to the boundary of $p$ than are the HPBs of $r$, then $p \in Y$. Otherwise $p \in X$.
\end{description}

See \cref{fig:domain} for an illustration of Case~$3$.

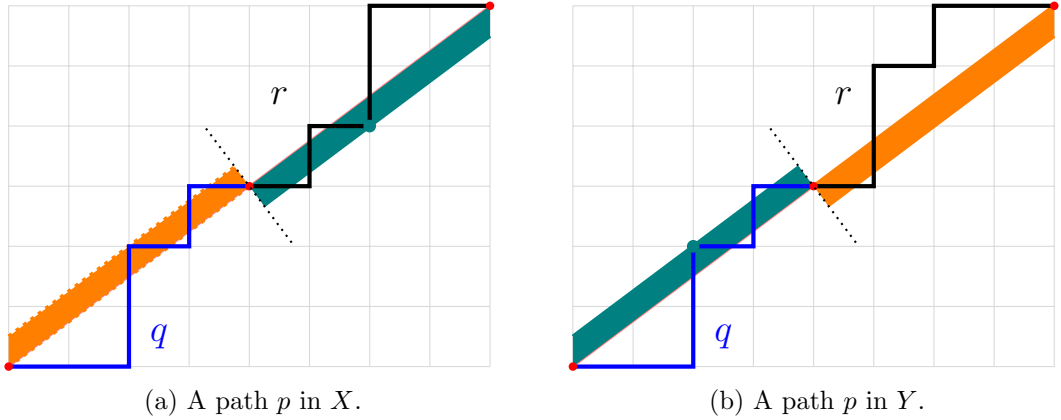
\begin{figure}[ht]
\begin{center}
\begin{subfigure}{.4\textwidth}
        \begin{tikzpicture}[scale = 0.8]
        \draw[gray!30] (0,0) grid (8,6);
        \draw[thick,red!50] (0,0) -- (8,6);
        \fill [opacity = 0.2, orange]
            (0,0) -- (0,1/2) -- (4-6/25,3+8/25) -- (4,3) -- cycle;
        \draw[orange, line width = 1.5pt, dotted]
            (0,0) -- (4,3);
        \draw[orange, line width = 1.5pt, dotted]
            (0,1/2) -- (4-6/25,3+8/25);
        \draw[teal, line width = 1.5pt]
            (6,4) -- (8, 6-1/2);
        \draw[teal, line width = 1.5pt, dotted]
            (4+6/25,3-8/25) -- (6,4);
        \fill [opacity = 0.2, teal]
            (4,3) -- (4+6/25,3-8/25) -- (8, 6-1/2) -- (8,6) -- cycle;
        \draw[ultra thick, blue] 
            (0,0)  -- (1,0) -- (2,0) -- (2,1) -- (2,2) -- (3,2) -- (3,3) -- (4,3);
        \draw[ultra thick, black] 
            (4,3) -- (5,3) -- (5,4) -- (6,4) -- (6,5) -- (6,6) -- (7,6) -- (8,6);
        \fill[red]  
            (0,0) circle (2pt)
            (4,3) circle (2pt)
            (8,6) circle (2pt);
        \draw[black,line width = 0.8pt, dotted]
            (4-3*6/25,3+3*8/25) -- (4+3*6/25,3-3*8/25);
        \fill[teal]	
            (6,4) circle (3pt);
        \draw[shift = {(2.5,0.5)}, color = blue] node[font=\Large] {$q$};
        \draw[shift = {(4.5,4.5)}, color = black] node[font=\Large] {$r$};
    \end{tikzpicture}
    \caption{A path $p$ in $X$.}
    \label{fig:domain_X}
\end{subfigure}
\qquad
\begin{subfigure}{.4\textwidth}
        \begin{tikzpicture}[scale = 0.8]
        \draw[gray!30] (0,0) grid (8,6);
        \draw[thick,red!50] (0,0) -- (8,6);
        \fill [opacity = 0.2, orange]
            (4,3) -- (4+6/25,3-8/25) -- (8, 6-1/2) -- (8,6) -- cycle;
        \draw[orange, line width = 1.5pt]
            (4+6/25,3-8/25) -- (8, 6-1/2);
        \draw[orange, line width = 1.5pt]
            (4,3) -- (8, 6);
        \draw[teal, line width = 1.5pt]
            (2,2) -- (4-6/25,3+8/25);
        \draw[teal, line width = 1.5pt, dotted]
            (0,1/2) -- (2,2);
        \fill [opacity = 0.2, teal]
        (0,0) -- (0,1/2) -- (4-6/25,3+8/25) -- (4,3) -- cycle;
        \draw[ultra thick, blue] 
            (0,0)  -- (1,0) -- (2,0) -- (2,1) -- (2,2) -- (3,2) -- (3,3) -- (4,3);
        \draw[ultra thick, black] 
            (4,3) -- (5,3) -- (5,4) -- (5,5) -- (6,5) -- (6,6) -- (7,6) -- (8,6);
        \fill[red]  
            (0,0) circle (2pt)
            (4,3) circle (2pt)
            (8,6) circle (2pt);
        \draw[black,line width = 0.8pt, dotted]
            (4-3*6/25,3+3*8/25) -- (4+3*6/25,3-3*8/25);
        \fill[teal]	
            (2,2) circle (3pt);
        \draw[shift = {(2.5,0.5)}, color = blue] node[font=\Large] {$q$};
        \draw[shift = {(4.5,4.5)}, color = black] node[font=\Large] {$r$};
    \end{tikzpicture}
    \caption{A path $p$ in $Y$.}
    \label{fig:domain_Y}
\end{subfigure}
\caption{Let $(a,b) = (4,3)$ and split the path $p$ into~$qr$ at its last non-terminal boundary point.
In diagram~(a), we have $p \in W_6(2)$ and the green region is determined by the elevation of the HPBs of the subpath~$r$; this in turn determines an open orange ``forbidden region'' that the subpath $q$ must avoid so that $p \in X$.
In diagram~(b), we have $p \in W_7(2)$ and the green region is determined by the elevation of the LPAs of the subpath~$q$; this in turn determines a closed orange ``forbidden region'' that the subpath $r$ must avoid so that $p \in Y$.}
\label{fig:domain}
\end{center}
\end{figure}

We now give a more concise definition of the subsets $X$ and~$Y$. Recall that $k$ is fixed and satisfies $0 \leq k < g(a+b)-1$ throughout this section.

\begin{definition}[The subsets $X$ and $Y$]
\phantomsection
\label{def:domain_partition}
    Let $p \in W_k(g) \setminus S_k(g)$. Split $p$ at its last non-terminal boundary point into~$q r$, and regard $q$ and $r$ as paths.  
    The path $p$ lies in $Y$ provided:
    \begin{enumerate}[label=\textit{(\roman*)}]
        \item $q$ has at least one flaw, and
        \label{def:domain_partition:condition_1}
        
        \item the elevation of the LPAs of $q$ is smaller than the magnitude of the elevation of the HPBs of~$r$ (if any). 
        \label{def:domain_partition:condition_2}
    \end{enumerate}
    Otherwise, $p$ lies in~$X$. \lrcornerqed
\end{definition}
Note that $Y$ is empty if $k=0$.  We now use \cref{def:domain_partition} to specify a canonical representation for a path in each of $X$ and $Y$ as a concatenation of paths.

\begin{definition}[Canonical representation of paths in $X$ and~$Y$]
\label{def:domain_split}
    Let $p \in W_k(g) \setminus S_k(g)$. Split $p$ at its last non-terminal boundary point into $p = q r$.
    \begin{description}

            \item[Case $p \in X$:]
            the path $r$ has at least one HPB. Split $r$ at its last HPB into~$r = r_1 r_2$. The canonical representation of $p$ is~$q r_1 r_2$.
    
        \item[Case $p \in Y$:]
            the path $q$ has at least one LPA. Split $q$ at its last LPA into~$q = q_1 q_2$. The canonical representation of $p$ is~$q_1 q_2 r$. \lrcornerqed
    \end{description}
\end{definition}

We now define the subsets $\mathcal{X}$ and~$\mathcal{Y}$ of $W_{k+1}(g)$ by reference to an arbitrary path $\mathbbm{p} \in W_{k+1}(g)$. 
Since $\mathbbm{p}$ has at least one flaw, it has at least one LPA.
Throughout, we use regular typeface (for example~$p$) for a path originating in $W_k(g)$ whereas we use blackboard bold typeface (for example~$\mathbbm{p}$) for a path originating in~$W_{k+1}(g)$.
This is intended to help distinguish the domain and codomain of the function~$\phi$ (namely $W_k(g)$ and~$W_{k+1}(g)$, respectively) from the domain and codomain of the function~$\psi$ (namely $W_{k+1}(g)$ and~$W_k(g)$, respectively).

\begin{definition}[The subsets $\mathcal{X}$ and $\mathcal{Y}$]
\phantomsection
\label{def:codomain_partition}
    Let $\mathbbm{p} \in W_{k+1}(g)$. 
    The path $\mathbbm{p}$ lies in $\mathcal{Y}$ provided: 
    \begin{enumerate}[label=\textit{(\roman*)}]
        \item $\mathbbm{p}$ has at least two LPAs, and
        \label{def:codomain_partition:condition_1}
        
        \item the subpath of $\mathbbm p$ lying between the last two LPAs of $\mathbbm{p}$ contains no boundary points of~$\mathbbm{p}$.
        \label{def:codomain_partition:condition_2}
    \end{enumerate}
    Otherwise, $\mathbbm{p}$ lies in~$\mathcal{X}$. \lrcornerqed
\end{definition}
Note that $\mathcal{Y}$ is empty if $k=0$.
See \cref{fig:codomain} for an illustration of \cref{def:codomain_partition}.

We now use \cref{def:codomain_partition} to specify a canonical path split for a path in each of $\mathcal X$ and~$\mathcal Y$.

\begin{definition}[Canonical representation of paths in $\mathcal X$ and~$\mathcal Y$]
\phantomsection
\label{def:codomain_split}
    Let $\mathbbm{p} \in W_{k+1}(g)$.
    \begin{description}

        \item[Case $\mathbbm{p} \in \mathcal{X}$:]
            let $L$ be the last LPA of~$\mathbbm{p}$, and let $B$ be the boundary point of $\mathbbm{p}$ (possibly the startpoint of $\mathbbm{p}$) which immediately precedes~$L$. 
            Split $\mathbbm{p}$ at $B$ and $L$ into $\mathbbm{p} = \mathbbm{q} \mathbbm{r}_2 \mathbbm{r}_1$. 
            The canonical representation of $\mathbbm{p}$ is~$\mathbbm{q} \mathbbm{r}_2 \mathbbm{r}_1$.

        \item[Case $\mathbbm{p} \in \mathcal{Y}$:]
            the path $\mathbbm{p}$ has at least two LPAs. 
            Split $\mathbbm{p}$ at its 
            last two LPAs
            into $\mathbbm{p} = \mathbbm{q}_1 \mathbbm{r} \mathbbm{q}_2$. 
            The canonical representation of $\mathbbm{p}$ is~$\mathbbm{q}_1 \mathbbm{r} \mathbbm{q}_2$.\lrcornerqed
    \end{description}
\end{definition}

\begin{figure}[ht]
\begin{center}
\begin{subfigure}{.4\textwidth}
        \begin{tikzpicture}[scale = 0.8]
        \draw[gray!30] (0,0) grid (8,6);
        \draw[thick,red!50] (0,0) -- (8,6);
        \fill [opacity = 0.2, orange]
            (2,2) -- (2+6/25,2-8/25) -- (6+6/25, 5-8/25) -- (6,5) -- cycle;
        \draw[orange, line width = 1.5pt]
            (2,2) -- (6,5);
        \draw[orange, line width = 1.5pt, dotted]
            (2+6/25,2-8/25) -- (6+6/25, 5-8/25);
        \fill [opacity = 0.2, teal]
            (0,0) -- (0,1/2) -- (2,2) -- (2+6/25,2-8/25) -- cycle;
        \draw[teal, line width = 1.5pt, dotted]
            (0,1/2) -- (2,2);
        \draw[teal, line width = 1.5pt, dotted]
            (0,0) -- (2+6/25,2-8/25);
        \fill [opacity = 0.2, teal]
            (6,5) -- (8-2/3,6) -- (8,6) -- (6+6/25, 5-8/25) -- cycle;
        \draw[teal, line width = 1.5pt]
            (6,5) -- (8-2/3,6);
        \draw[teal, line width = 1.5pt, dotted]
          (6+6/25, 5-8/25) -- (8,6);
        \draw[ultra thick, blue] 
            (0,0)  -- (1,0) -- (2,0) -- (2,1) -- (2,2);
        \draw[ultra thick, black] 
            (2,2) -- (3,2) -- (3,3) -- (4,3);
        \draw[ultra thick, black] 
            (4,3) -- (4,4) -- (4,5) -- (5,5) -- (6,5);
        \draw[ultra thick, purple] 
            (6,5) -- (7,5) -- (7,6) -- (8,6);
        \fill[red]  
            (0,0) circle (2pt)
            (4,3) circle (2pt)
            (8,6) circle (2pt);
        \draw[black,line width = 0.8pt, dotted]
            (2-2*6/25,2+2*8/25) -- (2+4*6/25,2-4*8/25);
        \draw[black,line width = 0.8pt, dotted]
            (6-2*6/25,5+2*8/25) -- (6+4*6/25,5-4*8/25);
        \fill[teal]	
            (2,2) circle (3pt)
            (6,5) circle (3pt);
        \fill[red]  
            (4,3) circle (3pt);
        \draw[shift = {(3.5,4.5)}, color = black] node[font=\Large] {$\mathbbm{r}$};
    \end{tikzpicture}
    \caption{A path $\mathbbm p$ in $\mathcal{X}$.}
    \label{fig:codomain_X}
\end{subfigure}
\qquad
%
\begin{subfigure}{.4\textwidth}
        \begin{tikzpicture}[scale = 0.8]
        \draw[gray!30] (0,0) grid (8,6);
        \draw[thick,red!50] (0,0) -- (8,6);
        \fill [opacity = 0.2, orange]
            (2,2) -- (2+6/25,2-8/25) -- (6+6/25, 5-8/25) -- (6,5) -- cycle;
        \draw[orange, line width = 1.5pt]
            (2+6/25,2-8/25) -- (6+6/25, 5-8/25);
        \draw[orange, line width = 1.5pt]
            (2,2) -- (6, 5);
        \draw[teal, line width = 1.5pt, dotted]
            (0,0) -- (2+6/25,2-8/25);
        \draw[teal, line width = 1.5pt, dotted]
            (0,1/2) -- (2,2);
        \fill [opacity = 0.2, teal]
            (0,0) -- (0,1/2) -- (2,2) -- (2+6/25,2-8/25) -- cycle;
        \draw[teal, line width = 1.5pt]
            (6,5) -- (8-2/3,6);
        \fill [opacity = 0.2, teal]
            (6,5) -- (8-2/3,6) -- (8,6) -- (6+6/25, 5-8/25) -- cycle;
        \draw[teal, line width = 1.5pt, dotted]
          (6+6/25, 5-8/25) -- (8,6);
        \draw[ultra thick, blue] 
            (0,0)  -- (1,0) -- (2,0) -- (2,1) -- (2,2);
        \draw[ultra thick, black]
            (2,2) -- (3,2) -- (3,3) -- (3,4) -- (4,4) -- (4,5) -- (5,5) -- (6,5);
        \draw[ultra thick, purple] 
            (6,5) -- (7,5) -- (7,6) -- (8,6);
        \fill[red]  
            (0,0) circle (2pt)
            (4,3) circle (2pt)
            (8,6) circle (2pt);
        \draw[black,line width = 0.8pt, dotted]
            (2-2*6/25,2+2*8/25) -- (2+4*6/25,2-4*8/25);
        \draw[black,line width = 0.8pt, dotted]
            (6-2*6/25,5+2*8/25) -- (6+4*6/25,5-4*8/25);
        \fill[teal]	
            (2,2) circle (3pt)
            (6,5) circle (3pt);
        \draw[shift = {(2.5,4.5)}, color = black] node[font=\Large] {$\mathbbm{r}$};
    \end{tikzpicture}
    \caption{A path $\mathbbm p$ in $\mathcal{Y}$.}
    \label{fig:codomain_Y}
\end{subfigure}
\caption{Let $(a,b) = (4,3)$.
In diagram (a), we have $\mathbbm p \in W_7(2)$ and the subpath $\mathbbm r$ between the (last) two LPAs of $\mathbbm p$ contains a boundary point of~$\mathbbm p$.
In diagram~(b), we have $\mathbbm p \in W_8(2)$ and the subpath $\mathbbm r$ between the (last) two LPAs of $\mathbbm p$ contains no boundary points of~$\mathbbm p$.
}
\label{fig:codomain}
\end{center}
\end{figure}
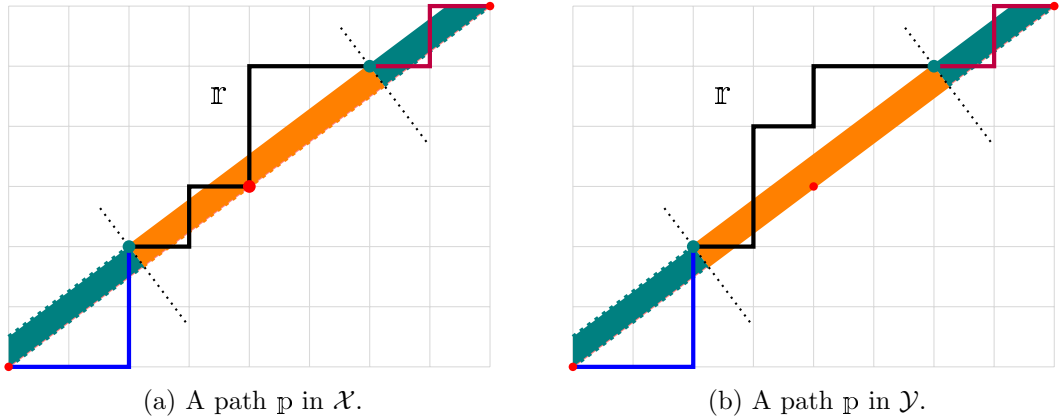

\subsection{The actions of $\phi^X$, $\phi^Y$, $\psi^{\mathcal{X}}$ and $\psi^\mathcal{Y}$}

We now define the maps $\phi^X, \phi^Y$, $\psi^\mathcal{X}$ and $\psi^\mathcal{Y}$, whose domains and codomains are given in \cref{fig:phi_construction_new}.  Illustrations of these maps are given in \cref{fig:XY_action}.

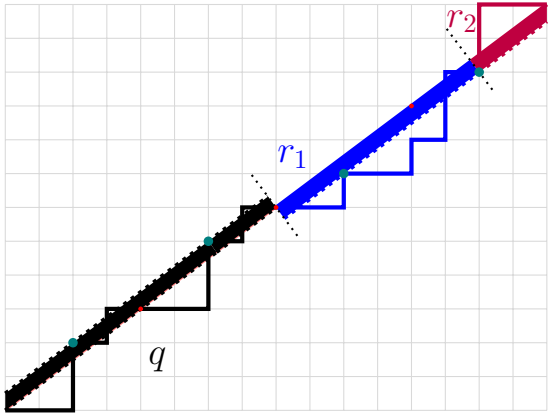
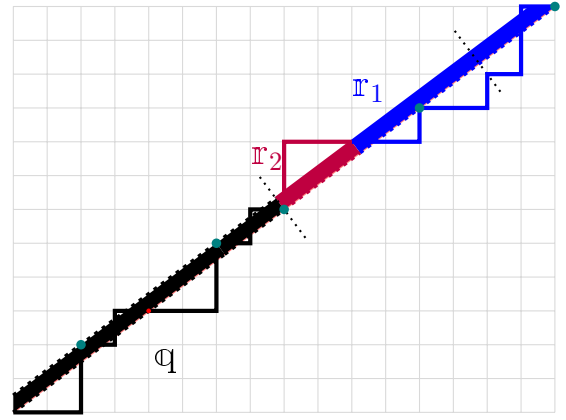
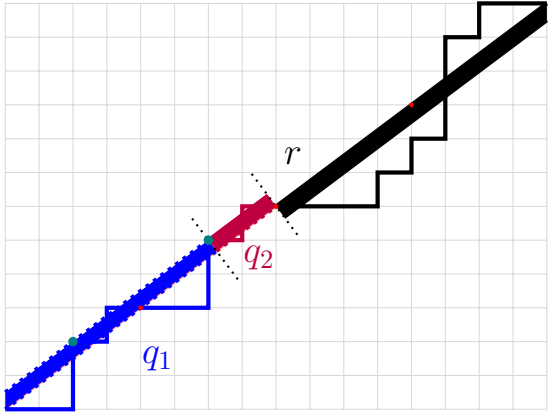
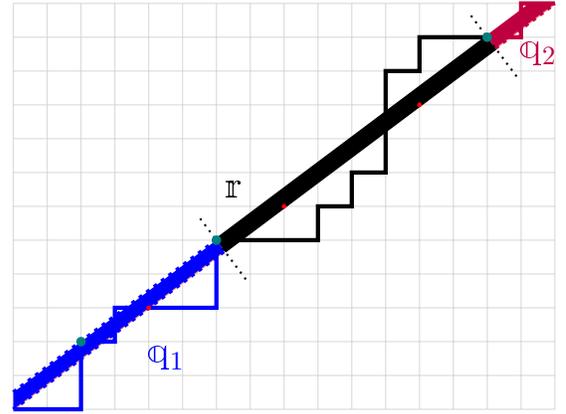
\begin{figure}[h!t]
\begin{center}
\begin{subfigure}{.45\textwidth}
    \begin{tikzpicture}[scale = 0.45]
        \draw[gray!30] (0,0) grid (16,12);
        \draw[thick,red!50] (0,0) -- (16,12);
        \fill[opacity = 0.2, blue]
            (8,6) -- (8+6/25,6-8/25) -- (14,10) --  (14-2*3/25,10+2*4/25) -- cycle;
        \draw[blue, line width = 1.5pt, opacity = 0.4]
            (8,6) -- (14-2*3/25,10+2*4/25);    
        \draw[blue, line width = 1.5pt, opacity = 0.4, dotted]
            (8+6/25,6-8/25) -- (14,10);
        %
        \fill[opacity = 0.2, purple]
            (14-2*3/25,10+2*4/25) -- (14,10) -- (16, 12-1/2) -- (16,12) -- cycle;
        \draw[purple, line width = 1.5pt, opacity = 0.4]
            (14-2*3/25,10+2*4/25) -- (16,12);
        \draw[purple, line width = 1.5pt, opacity = 0.4, dotted]
            (14,10) -- (16, 12-1/2);
        %
        \fill[opacity = 0.2, black]
            (0,0) -- (0,1/2) -- (6,5) -- (6+6/25,5-8/25) -- cycle;
        \draw[black, line width = 1.5pt, dotted, opacity = 0.4]
            (0,1/2) -- (6,5); 
        \draw[black, line width = 1.5pt, dotted, opacity = 0.4]
            (0,0) -- (6+ 6/25,5-8/25);
        \fill[opacity = 0.2, black]
            (6,5) -- (8-6/25,6+8/25) -- (8,6) -- (6+6/25,5-8/25) -- cycle;
        \draw[black, line width = 1.5pt, opacity = 0.4, dotted]
            (6,5) -- (8-6/25,6+8/25);    
        \draw[black, line width = 1.5pt, dotted, opacity = 0.4]
            (6+ 6/25,5-8/25) -- (8,6);
        \draw[black,line width = 0.8pt, dotted]
            (8-3*6/25,6+3*8/25) -- (8+3*6/25,6-3*8/25);
        \draw[black,line width = 0.8pt, dotted]
            (14-8*3/25,10+8*4/25) -- (14+4*3/25,10-4*4/25); 
        \draw[ultra thick, black] 
            (0,0)  -- (1,0) -- (2,0) -- (2,1) -- (2,2) -- (3,2) -- (3,3) -- (4,3);
        \draw[ultra thick, black]
            (4,3)  -- (5,3) -- (6,3) -- (6,4) -- (6,5);
        \draw[ultra thick, black] 
            (6,5) -- (7,5) -- (7,6) -- (8,6);
        \draw[ultra thick, blue] 
            (8,6) -- (9,6) -- (10,6) -- (10,7) -- (11,7) -- (12,7) -- (12,8) -- (13,8) -- (13,9)-- (13,10) -- (14,10);
        \draw[ultra thick, purple] 
            (14,10) -- (14,11) -- (14,12) -- (15,12) -- (16,12);
        \fill[red]  
            (4,3) circle (2pt)
            (8,6) circle (2pt)
            (12,9) circle (2pt);
        \fill[teal]	
            (2,2) circle (4pt);
        \fill[teal]	
            (6,5) circle (4pt);
        \fill[teal]	
            (10,7) circle (4pt);
        \fill[teal]	
            (14,10) circle (4pt);
        \draw[shift = {(4.5,1.5)}, color = black]        
            node[font=\Large] {$q$};
        \draw[shift = {(8.5,7.5)}, color = blue] 
            node[font=\Large] {$r_1$};
        \draw[shift = {(13.5,11.5)}, color = purple] 
            node[font=\Large] {$r_2$};
        \draw[shift = {(14.5,9.5)}, color = teal] 
            node[font=\Large] {$H$};
    \end{tikzpicture}
    \caption{Canonical representation of a path $p$ in~$X$.}
    \label{fig:X_preimage}
\end{subfigure}
\quad \quad
%
%
\pgfmathsetmacro{\xShiftQ}{0}
\pgfmathsetmacro{\yShiftQ}{0}
\pgfmathsetmacro{\xShiftrOne}{2}
\pgfmathsetmacro{\yShiftrOne}{2}
\pgfmathsetmacro{\xShiftrTwo}{-6}
\pgfmathsetmacro{\yShiftrTwo}{-4}
\begin{subfigure}{.45\textwidth}
    \begin{tikzpicture}[scale = 0.45]
        \draw[gray!30] (0,0) grid (16,12);
        \draw[thick,red!50] (0,0) -- (16,12);
        \fill[opacity = 0.2, blue]
            (\xShiftrOne+8,6+\yShiftrOne) -- (\xShiftrOne+8+6/25,6-8/25+\yShiftrOne) -- (\xShiftrOne+14,10+\yShiftrOne) --  (\xShiftrOne+14-2/3,10+\yShiftrOne) -- cycle;
        \draw[blue, line width = 1.5pt, opacity = 0.4]
            (\xShiftrOne+8,6+\yShiftrOne) -- (\xShiftrOne+14-2/3,10+\yShiftrOne);    
        \draw[blue, line width = 1.5pt, opacity = 0.4, dotted]
            (\xShiftrOne+8+6/25,6-8/25+\yShiftrOne) -- (\xShiftrOne+14,10+\yShiftrOne);
        %
        \fill[opacity = 0.2, purple]
            (\xShiftrTwo+14-2*3/25,10+2*4/25+\yShiftrTwo) -- (\xShiftrTwo+14,10+\yShiftrTwo) -- (\xShiftrTwo+16+2*3/25, 12-2*4/25+\yShiftrTwo) -- (\xShiftrTwo+16,12+\yShiftrTwo) -- cycle;
        \draw[purple, line width = 1.5pt, opacity = 0.4]
            (\xShiftrTwo+14-2*3/25,10+2*4/25+\yShiftrTwo) -- (\xShiftrTwo+16,12+\yShiftrTwo);
        \draw[purple, line width = 1.5pt, opacity = 0.4, dotted]
            (\xShiftrTwo+14,10+\yShiftrTwo) -- (\xShiftrTwo+16+2*3/25, 12-2*4/25+\yShiftrTwo);
        %
        \fill[opacity = 0.2, black]
            (0,0) -- (0,1/2) -- (6,5) -- (6+6/25,5-8/25) -- cycle;
        \draw[black, line width = 1.5pt, dotted, opacity = 0.4]
            (0,1/2) -- (6,5);   
        \draw[black, line width = 1.5pt, dotted, opacity = 0.4]
            (0,0) -- (6+ 6/25,5-8/25);
        \fill[opacity = 0.2, black]
            (6,5) -- (8-6/25,6+8/25) -- (8,6) -- (6+6/25,5-8/25) -- cycle;
        \draw[black, line width = 1.5pt, opacity = 0.4, dotted]
            (6,5) -- (8-6/25,6+8/25);    
        \draw[black, line width = 1.5pt, dotted, opacity = 0.4]
            (6+ 6/25,5-8/25) -- (8,6);
        \draw[black,line width = 0.8pt, dotted]
            (8-3*6/25,6+3*8/25) -- (8+3*6/25,6-3*8/25);
        \draw[black,line width = 0.8pt, dotted]
            (14-8*3/25,10+8*4/25) -- (14+4*3/25,10-4*4/25); 
        \draw[ultra thick, black] 
            (0,0)  -- (1,0) -- (2,0) -- (2,1) -- (2,2) -- (3,2) -- (3,3) -- (4,3);
        \draw[ultra thick, black]
            (4,3)  -- (5,3) -- (6,3) -- (6,4) -- (6,5);
        \draw[ultra thick, black] 
            (6,5) -- (7,5) -- (7,6) -- (8,6);
        \draw[ultra thick, blue] 
            (\xShiftrOne+8,6+\yShiftrOne) -- (\xShiftrOne+9,6+\yShiftrOne) -- (\xShiftrOne+10,6+\yShiftrOne) -- (\xShiftrOne+10,7+\yShiftrOne) -- (\xShiftrOne+11,7+\yShiftrOne) -- (\xShiftrOne+12,7+\yShiftrOne) -- (\xShiftrOne+12,8+\yShiftrOne) -- (\xShiftrOne+13,8+\yShiftrOne) -- (\xShiftrOne+13,9+\yShiftrOne)-- (\xShiftrOne+13,10+\yShiftrOne) -- (\xShiftrOne+14,10+\yShiftrOne);
        \draw[ultra thick, purple] 
            (\xShiftrTwo+14,10+\yShiftrTwo) -- (\xShiftrTwo+14,11+\yShiftrTwo) -- (\xShiftrTwo+14,12+\yShiftrTwo) -- (\xShiftrTwo+15,12+\yShiftrTwo) -- (\xShiftrTwo+16,12+\yShiftrTwo);
        \fill[red]  
            (4,3) circle (2pt)
            (8,6) circle (2pt)
            (12,9) circle (2pt);
        \fill[teal]	
            (2,2) circle (4pt);
        \fill[teal]	
            (6,5) circle (4pt);
        \fill[teal]	
            (\xShiftrOne+10,7+\yShiftrOne) circle (4pt);
        \fill[teal]	
            (\xShiftrOne+14,10+\yShiftrOne) circle (4pt);
        \fill[teal]	
            (\xShiftrTwo+14,10+\yShiftrTwo) circle (4pt);
        \fill[teal]	
            (\xShiftrOne+8,6+\yShiftrOne) circle (4pt);
        \draw[shift = {(4.5,1.5)}, color = black]        
            node[font=\Large] {$\mathbbm{q}$};
        \draw[shift = {(\xShiftrOne+8.5,7.5+\yShiftrOne)}, color = blue] 
            node[font=\Large] {$\mathbbm{r}_1$};
        \draw[shift = {(\xShiftrTwo+13.5,11.5+\yShiftrTwo)}, color = purple] 
            node[font=\Large] {$\mathbbm{r}_2$};
        \draw[shift = {(\xShiftrOne+7.5,6.5+\yShiftrOne)}, color = teal] 
            node[font=\Large] {$L$};
    \end{tikzpicture}
    \caption{Canonical representation of a path $\mathbbm{p}$ in~$\mathcal{X}$.}
    \label{fig:X_image}
\end{subfigure}
\\[4em]
\begin{subfigure}{.45\textwidth}
    \begin{tikzpicture}[scale = 0.45]
        \draw[gray!30] (0,0) grid (16,12);
        \draw[thick,red!50] (0,0) -- (16,12);
        \fill[opacity = 0.2, black]
            (8,6) -- (8+6/25,6-8/25) -- (16, 12-1/2) -- (16,12) -- cycle;
        \draw[black, line width = 1.5pt, opacity = 0.4]
            (8+6/25,6-8/25) -- (16, 12-1/2);
        \draw[black, line width = 1.5pt, opacity = 0.4]
            (8,6) -- (16,12);
        \draw[blue, line width = 1.5pt, dotted, opacity = 0.5]
            (0,1/2) -- (6,5);   
        \fill[opacity = 0.2, blue]
            (0,0) -- (0,1/2) -- (6,5) -- (6+6/25,5-8/25) -- cycle;
        \draw[blue, line width = 1.5pt, dotted, opacity = 0.5]
            (0,0) -- (6+ 6/25,5-8/25);
        \draw[purple, line width = 1.5pt, opacity = 0.5]
            (6,5) -- (8-6/25,6+8/25);    
        \fill[opacity = 0.2, purple]
            (6,5) -- (8-6/25,6+8/25) -- (8,6) -- (6+6/25,5-8/25) -- cycle;
        \draw[purple, line width = 1.5pt, dotted, opacity = 0.5]
            (6+ 6/25,5-8/25) -- (8,6);
        \draw[black,line width = 0.8pt, dotted]
            (6-2*6/25,5+2*8/25) -- (6+4*6/25,5-4*8/25); 
        \draw[black,line width = 0.8pt, dotted]
            (8-3*6/25,6+3*8/25) -- (8+3*6/25,6-3*8/25);
        \draw[ultra thick, blue] 
            (0,0)  -- (1,0) -- (2,0) -- (2,1) -- (2,2) -- (3,2) -- (3,3) -- (4,3);
        \draw[ultra thick, blue]
            (4,3)  -- (5,3) -- (6,3) -- (6,4) -- (6,5);
        \draw[ultra thick, purple] 
            (6,5) -- (7,5) -- (7,6) -- (8,6);
        \draw[ultra thick, black] 
            (8,6) -- (9,6) -- (10,6) -- (11,6) -- (11,7) -- (12,7) -- (12,8) -- (13,8) -- (13,9) -- (13,10) -- (13,11) -- (14,11) -- (14,12) -- (15,12) -- (16,12);
        \fill[red]  
            (4,3) circle (2pt)
            (8,6) circle (2pt)
            (12,9) circle (2pt);
        \fill[teal]	
            (2,2) circle (4pt);
        \fill[teal]	
            (6,5) circle (4pt);
        \draw[shift = {(4.5,1.5)}, color = blue]        
            node[font=\Large] {$q_1$};
        \draw[shift = {(7.5,4.5)}, color = purple] 
            node[font=\Large] {$q_2$};
        \draw[shift = {(8.5,7.5)}, color = black] 
            node[font=\Large] {$r$};
        \draw[shift = {(5.5,5.5)}, color = teal] 
            node[font=\Large] {$L$};
    \end{tikzpicture}
    \caption{Canonical representation of a path $p$ in~$Y$.}
    \label{fig:Y_preimage}
\end{subfigure}
\quad \quad
%
%
\pgfmathsetmacro{\xShiftqOne}{0}
\pgfmathsetmacro{\yShiftqOne}{0}
\pgfmathsetmacro{\xShiftqTwo}{8}
\pgfmathsetmacro{\yShiftqTwo}{6}
\pgfmathsetmacro{\xShiftR}{-2}
\pgfmathsetmacro{\yShiftR}{-1}
\begin{subfigure}{.45\textwidth}
    \begin{tikzpicture}[scale = 0.45]
        \draw[gray!30] (0,0) grid (16,12);
        \draw[thick,red!50] (0,0) -- (16,12);
        \fill[opacity = 0.2, black]
            (\xShiftR+8,6+\yShiftR) -- (\xShiftR+8+6/25,6-8/25+\yShiftR) -- (\xShiftR+16+6/25, 12-8/25+\yShiftR) -- (\xShiftR+16,12+\yShiftR) -- cycle;
        \draw[black, line width = 1.5pt, opacity = 0.4]
            (\xShiftR+8+6/25,6-8/25+\yShiftR) -- (\xShiftR+16+6/25, 12-8/25+\yShiftR);
        \draw[black, line width = 1.5pt, opacity = 0.4]
            (\xShiftR+8,6+\yShiftR) -- (\xShiftR+16,12+\yShiftR);
        \draw[blue, line width = 1.5pt, dotted, opacity = 0.5]
            (0,1/2) -- (6,5);   
        \fill[opacity = 0.2, blue]
            (0,0) -- (0,1/2) -- (6,5) -- (6+6/25,5-8/25) -- cycle;
        \draw[blue, line width = 1.5pt, dotted, opacity = 0.5]
            (0,0) -- (6+ 6/25,5-8/25);
        \draw[purple, line width = 1.5pt, opacity = 0.5]
            (\xShiftqTwo+6,5+\yShiftqTwo) -- (\xShiftqTwo+8-2/3,6+\yShiftqTwo);    
        \fill[opacity = 0.2, purple]
            (\xShiftqTwo+6,5+\yShiftqTwo) -- (\xShiftqTwo+8-2/3,6+\yShiftqTwo) -- (\xShiftqTwo+8,6+\yShiftqTwo) -- (\xShiftqTwo+6+6/25,5-8/25+\yShiftqTwo) -- cycle;
        \draw[purple, line width = 1.5pt, dotted, opacity = 0.5]
            (\xShiftqTwo+6+ 6/25,5-8/25+\yShiftqTwo) -- (\xShiftqTwo+8,6+\yShiftqTwo);
        \draw[black,line width = 0.8pt, dotted]
            (6-2*6/25,5+2*8/25) -- (6+4*6/25,5-4*8/25); 
        \draw[black,line width = 0.8pt, dotted]
            (\xShiftqTwo+6-2*6/25,5+2*8/25+\yShiftqTwo) -- (\xShiftqTwo+6+4*6/25,5-4*8/25+\yShiftqTwo);
        \draw[ultra thick, blue] 
            (0,0)  -- (1,0) -- (2,0) -- (2,1) -- (2,2) -- (3,2) -- (3,3) -- (4,3);
        \draw[ultra thick, blue]
            (4,3)  -- (5,3) -- (6,3) -- (6,4) -- (6,5);
        \draw[ultra thick, purple] 
            (\xShiftqTwo+6,5+\yShiftqTwo) -- (\xShiftqTwo+7,5+\yShiftqTwo) -- (\xShiftqTwo+7,6+\yShiftqTwo) -- (\xShiftqTwo+8,6+\yShiftqTwo);
        \draw[ultra thick, black] 
            (\xShiftR+8,6+\yShiftR) -- (\xShiftR+9,6+\yShiftR) -- (\xShiftR+10,6+\yShiftR) -- (\xShiftR+11,6+\yShiftR) -- (\xShiftR+11,7+\yShiftR) -- (\xShiftR+12,7+\yShiftR) -- (\xShiftR+12,8+\yShiftR) -- (\xShiftR+13,8+\yShiftR) -- (\xShiftR+13,9+\yShiftR) -- (\xShiftR+13,10+\yShiftR) -- (\xShiftR+13,11+\yShiftR) -- (\xShiftR+14,11+\yShiftR) -- (\xShiftR+14,12+\yShiftR) -- (\xShiftR+15,12+\yShiftR) -- (\xShiftR+16,12+\yShiftR);
        \fill[red]  
            (4,3) circle (2pt)
            (8,6) circle (2pt)
            (12,9) circle (2pt);
        \fill[teal]	
            (2,2) circle (4pt);
        \fill[teal]	
            (6,5) circle (4pt);
        \fill[teal]	
            (\xShiftqTwo+6,5+\yShiftqTwo) circle (4pt);
        \draw[shift = {(4.5,1.5)}, color = blue] node[font=\Large] {$\mathbbm{q}_1$};
        \draw[shift = {(\xShiftqTwo+7.5,4.5+\yShiftqTwo)}, color = purple] node[font=\Large] {$\mathbbm{q}_2$};
        \draw[shift = {(\xShiftR+8.5,7.5+\yShiftR)}, color = black] node[font=\Large] {$\mathbbm{r}$};
        \draw[shift = {(5.5,5.5)}, color = teal] node[font=\Large] {$L$};
        \draw[shift = {(\xShiftqTwo+5.5,5.6+\yShiftqTwo)}, color = teal] node[font=\Large] {$L'$};
    \end{tikzpicture}
    \caption{Canonical representation of a path $\mathbbm{p}$ in~$\mathcal{Y}$.}
    \label{fig:Y_image}
\end{subfigure}
\caption{The maps $\phi^X$ and $\psi^{\mathcal{X}}$ act on the paths in diagrams (a) and (b), respectively, and their images are (b) and (a), respectively.  Similarly the maps $\phi^Y$ and $\psi^{\mathcal{Y}}$ act on paths in diagram~(c) and~(d), respectively, and their images are~(d) and~(c), respectively. 
The LPAs and HPBs of the paths determine open or closed forbidden regions (denoted using dotted or solid lines, respectively) within which no points of the path can lie. The labelling of the points in this figure is consistent (when applicable) with that used in the proofs of \cref{rem:codomain_phi_X,rem:codomain_phi_Y,rem:codomain_psi_X,rem:codomain_psi_Y}.}
\label{fig:XY_action}
\end{center}
\end{figure}

\subsubsection{The actions of \texorpdfstring{$\phi^X$ and $\phi^Y$}{phi\^X,phi\^Y}}

We now define the maps $\phi^X$ and~$\phi^Y$.  

\begin{definition}[Actions of $\phi^X$ and $\phi^Y$]
\phantomsection\label{def:phi}
Let $p \in W_k(g) \setminus S_k(g)$.
    \begin{description}

        \item[Case $p \in X$:]
            Write $p = q r_1 r_2$ according to \cref{def:domain_split}. Then $\phi^X: X \to \mathcal{X}$ is given by
            \begin{align}
                \phi^X(q r_1 r_2) = q r_2 r_1.
            \end{align}
    
        \item[Case $p \in Y$:]
            Write $p = q_1 q_2 r$ according to \cref{def:domain_split}. Then $\phi^Y: Y \to \mathcal{Y}$ is given by
            \begin{align}
                \phi^Y(q_1 q_2 r) = q_1 r q_2. 
            \end{align}
            \lrcornerqedequation
    \end{description}
\end{definition}

 \begin{prop} 
 \label{rem:codomain_phi_X}
 The map $\phi^X$ is well-defined.
 \end{prop}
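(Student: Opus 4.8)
The plan is to verify, in order, the three things that well-definedness of $\phi^X$ demands: that each $p\in X$ has an unambiguous canonical representation $qr_1r_2$ to which the rule applies; that the output $qr_2r_1$ is a genuine element of $N(g)$; and, the substantive point, that $qr_2r_1$ lands in the declared codomain $\mathcal X$ rather than merely in $N_{k+1}(g)$. First I would dispose of the two bookkeeping points. The last non-terminal boundary point of $p$ is unique, so the split $p=qr$ is determined; and since $p\in X$ lies in $N_k(g)\setminus S_k(g)$, the tail $r$ must have non-max flaws (a path whose $r$-part has max flaws falls into $Y$ or into $S_k(g)$), hence has no interior boundary points and, by the discussion preceding \cref{def:domain_partition}, at least one HPB, whose last occurrence is unique. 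Thus the decomposition $qr_1r_2$ of \cref{def:domain_split} is well-defined. Since $qr_2r_1$ is merely a reordering of the step multiset of $p=qr_1r_2$, it starts at $(0,0)$ with the same total displacement and so ends at $(ga,gb)$, placing it in $N(g)$.

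For the flaw count I would apply \cref{obs:flaws} to the path $r=r_1r_2$ in its own right. Writing $B$ for the endpoint of $q$, the boundary of $r$ agrees with that of $p$ along the segment from $B$ to $(ga,gb)$, the path $r$ has no interior boundary points, and it is split at the HPB at which $r_1$ ends; hence $r_2r_1$ carries exactly one more flaw than $r$. As the steps of $q$ are unchanged and occur first, the flaws inside the $q$-portion agree in $p$ and in $qr_2r_1$, so the total rises by exactly one and $qr_2r_1\in N_{k+1}(g)$.

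The hard part is showing $qr_2r_1\in\mathcal X$, and here I would track elevations (\cref{def:elev}). Let $h>0$ be the magnitude of the elevation of the HPBs of $r$. Reattaching the $r_2$-block at $B$ (elevation $0$) instead of at its old start of elevation $-h$, and the $r_1$-block at the new junction $J$ instead of at $B$, raises the elevation of every point of both blocks by exactly $h$; in particular $J$ becomes a flaw of elevation $h$. Using that the HPB realises the below-boundary elevation closest to $0$ and is the \emph{last} such point, one checks that every other above-boundary point of the $r_2$- and $r_1$-portions has elevation strictly greater than $h$. The defining property of $X$ — that $p\notin Y$, so either $q$ is flawless or the elevation of its LPAs is at least $h$ — is exactly what prevents any above-boundary point of the $q$-portion from having elevation below $h$. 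Hence $h$ is the minimum positive elevation and $J$ is the last LPA of $qr_2r_1$; moreover $J$ is the \emph{only} LPA unless $q$ has LPAs of elevation exactly $h$, in which case the boundary point $B$ lies strictly between the last LPA of $q$ and $J$. Either way a condition of \cref{def:codomain_partition} fails, so $qr_2r_1\notin\mathcal Y$, that is, $qr_2r_1\in\mathcal X$.

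I expect the final paragraph to be the main obstacle. The uniform elevation shift by $h$ is routine to record, but one must argue carefully that the LPA-versus-HPB threshold built into the split between $X$ and $Y$ is precisely the hypothesis needed to stop a raised point of the $q$-portion (or a raised interior point of $r_1$) from undercutting $J$ and thereby dragging the image into $\mathcal Y$; this is the one place where the hypothesis $p\in X$, as opposed to $p\in N_k(g)\setminus S_k(g)$, is genuinely used.
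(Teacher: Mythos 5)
Your proposal is correct and follows essentially the same route as the paper's proof: it uses \cref{obs:flaws} applied to the path $r_1r_2$ (which has no interior boundary points) to get the flaw count $k+1$, and then uses the elevation threshold in \cref{def:domain_partition} to show that the endpoint of $r_2$ is the unique LPA of $qr_2r_1$ inside the subpath $r_2r_1$, so that either condition \textit{(i)} or condition \textit{(ii)} of \cref{def:codomain_partition} fails. Your elevation bookkeeping (the uniform shift by $h$ and the check that no other point of the $r_2$- or $r_1$-portion lands at elevation exactly $h$) is simply a more explicit version of the paper's statements 2 and 4.
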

 \begin{proof}
    Let $p\in X$. 
    We must check that 
    $\phi^X(p) = q r_2 r_1$ belongs to~$\mathcal{X}$.
    Let $H$ be the startpoint of~$r_2$.
    By \cref{def:domain_split}, $H$ is the last HPB of the path~$r_1 r_2$.
    Since $p$ is split at its last non-terminal boundary point into paths $q$ and $r_1 r_2$, we have:
    \begin{enumerate}
        \item 
        the path $r_1 r_2$ has no interior boundary points.
    \end{enumerate}
    Since $p \in X$, by \cref{def:domain_partition} we have:
    \begin{enumerate}
        \item[2.]
        the elevation of the LPAs of the path $q$ (if any) is greater than or equal to the magnitude of the elevation of~$H$ in the path~$r_1 r_2$.    
    \end{enumerate}

    It follows from statement 1 and \cref{obs:flaws} that:

    \begin{enumerate}
        \item[3.]
        the path $r_2 r_1$ has exactly one more flaw than does~$r_1 r_2$.
    \end{enumerate}

    It follows from statements 1 and 2 that:
    \begin{enumerate}
        \item[4.] 
        the subpath $r_2 r_1$ contains exactly one of the LPAs of~$\phi^X(p)$ (namely the endpoint of~$r_2$).
    \end{enumerate}

    It follows from statement 3 that $\phi^X(p)$ contains exactly one more flaw than~$p$. Furthermore, since the startpoint $H$ of $r_2$ is a boundary point of the path $\phi^X(p) = q r_2 r_1$, statement 4 implies that $\phi^X(p)$ cannot simultaneously satisfy both conditions \ref{def:codomain_partition:condition_1} and \ref{def:codomain_partition:condition_2} of \cref{def:codomain_partition}. Therefore $\phi^X(p) \in \mathcal{X}$, as required.
\end{proof}

\begin{remark}
\label{rem:remark_after_phi_X_well-defined}
    Continue with the notation from the proof of \cref{rem:codomain_phi_X}.
    We note for use in \cref{subsec:proof} that, since the startpoint $H$ of $r_2$ is the last HPB of the path $r_1r_2$ and is a boundary point of the path~$\phi^X(p)$, we have that $H$ is the only boundary point of $\phi^X(p) = qr_2r_1$ contained in the subpath~$r_2$.\lrcornerqed
\end{remark}

\begin{prop}
The map $\phi^Y$ is well-defined.
\label{rem:codomain_phi_Y}
\end{prop}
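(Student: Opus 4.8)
The plan is to parallel the proof of \cref{rem:codomain_phi_X}: taking an arbitrary $p \in Y$ with canonical representation $p = q_1 q_2 r$ (\cref{def:domain_split}), I must show that $\phi^Y(p) = q_1 r q_2$ has exactly $k+1$ flaws and satisfies both defining conditions of $\mathcal{Y}$ in \cref{def:codomain_partition}. First I would fix notation: let $B$ be the last non-terminal boundary point of $p$ (the startpoint of $r$), let $L$ be the last LPA of the path $q = q_1 q_2$ (the startpoint of $q_2$), and let $e_L$ denote the elevation of $L$, which is the common positive elevation of all the LPAs of~$q$. Because $B = (ja,jb)$ and $(ga,gb)$ both lie on the main boundary, the paths $q$ and $r$ each have a boundary that coincides with the main line, so their elevations are the same as those measured inside~$p$.

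The core of the argument is an elevation bookkeeping carried out relative to the main boundary. In passing from $p = q_1 q_2 r$ to $\phi^Y(p) = q_1 r q_2$, the prefix $q_1$ is untouched; the block $q_2$ is translated by $(ga,gb)-B$, a vector of elevation $0$, so every point of $q_2$ keeps its elevation; and the block $r$ is translated so that its startpoint moves from $B$ (elevation $0$) to $L$ (elevation $e_L$), so every point of $r$ has its elevation raised by exactly~$e_L$. I would then track which points change flaw status. The formerly boundary endpoint $(ga,gb)$ of $r$ is raised to a new point $M$ of elevation $e_L > 0$, becoming a flaw; I claim this is the \emph{only} change. No former flaw is lost, since $q_1$ and $q_2$ are unchanged and the points of $r$ are only raised. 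No new flaw is gained among the below-boundary points of $r$, for if $r$ has such points then $p \in Y$ forces, by condition~\ref{def:domain_partition:condition_2} of \cref{def:domain_partition}, the magnitude $|e_H|$ of the elevation of an HPB of $r$ to strictly exceed $e_L$; since every below-boundary point of $r$ has elevation at most $e_H < -e_L$, raising by $e_L$ leaves it at elevation at most $e_H + e_L < 0$. Hence $\phi^Y(p) \in N_{k+1}(g)$.

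It then remains to check the two conditions of \cref{def:codomain_partition}, and I would do so by showing that $L$ and $M$ are precisely the last two LPAs of $\phi^Y(p)$. Every flaw of $q$ that occurs after $L$ has elevation strictly greater than $e_L$ (as $L$ is the \emph{last} LPA of $q$), the points of $q_2$ keep those elevations, and the raised interior points of $r$ have elevation either $> e_L$ (former flaws) or $< 0$ (former below-boundary points, by the bookkeeping above); so in the portion of the path after $L$ the minimal positive elevation $e_L$ is attained only at $L$ and at the later point~$M$. This gives at least two LPAs (condition~\ref{def:codomain_partition:condition_1}), with $M$ last and $L$ second-to-last. Finally, the subpath of $\phi^Y(p)$ lying between these last two LPAs is exactly the translated copy of $r$, all of whose interior points have elevation $> e_L > 0$ or $< 0$ and hence are not boundary points; this is condition~\ref{def:codomain_partition:condition_2}, so $\phi^Y(p) \in \mathcal{Y}$.

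I expect the main obstacle to be the elevation bookkeeping underlying condition~\ref{def:codomain_partition:condition_2}: ruling out a boundary point strictly between the last two LPAs depends on the \emph{strict} inequality $e_L < |e_H|$ guaranteed by $p \in Y$. Were it non-strict, an HPB of $r$ would be raised exactly onto the main boundary, producing an interior boundary point between $L$ and $M$ and sending the image into $\mathcal{X}$ instead — mirroring the role of $H$ in \cref{rem:codomain_phi_X}. Minor points still to confirm are that $r$ is nonempty (so that $M \neq L$ and there really are two LPAs), which holds because $B$ is non-terminal, and that $q$ has an LPA to split at, which holds because condition~\ref{def:domain_partition:condition_1} gives $q$ a flaw.
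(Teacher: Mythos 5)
Your proof is correct and follows essentially the same route as the paper's: identify the endpoint of $q_1$ and the translated endpoint of $r$ (your $M$, the paper's $L'$) as the last two LPAs of $\phi^Y(p)$, show the flaw count rises by exactly one, and use the strict inequality coming from membership in $Y$ to keep the raised copy of $r$ free of boundary points. The only step you use without stating it is that $r$ has no interior boundary points (immediate from splitting $p$ at its \emph{last} non-terminal boundary point), which is precisely what guarantees that the interior points of the raised $r$ sit at elevation $>e_L$ or $<0$ rather than landing on the boundary or at elevation $e_L$.
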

\begin{proof}
    Let $p\in Y$. 
    We must check that 
    $\phi^Y(p) = q_1 r q_2$ belongs to~$\mathcal{Y}$.
    Let $L$ be the endpoint of~$q_1$.
    By \cref{def:domain_split}, $L$ is the last LPA of the path~$q_1 q_2$.
    Since $p \in Y$, by \cref{def:domain_partition} the elevation of the LPAs of the path $q_1 q_2$ (including~$L$) is smaller than the magnitude of the elevation of the HPBs of the path~$r$ (if any). Therefore:
    \begin{enumerate}
        \item the path $\phi^Y(p)$ contains exactly one more flaw than $p$, namely the startpoint $L'$ of~$q_2$.
        \item the points $L$ and $L'$ are the (distinct) last two LPAs of $\phi^Y(p)$ (since the path $r$ has no interior boundary points by \cref{def:domain_split}). 
        \item the subpath $r$ contains no boundary points of~$\phi^Y(p)$.
    \end{enumerate}
    This shows by \cref{def:codomain_partition} that $q_1 r q_2 \in \mathcal{Y}$, as required.
\end{proof}

\subsubsection{The actions of \texorpdfstring{$\psi^\mathcal X$ and $\psi^\mathcal Y$}{psi\^X and psi\^Y}}
\phantomsection
\label{subsec:psi_action}

We now define the maps $\psi^\mathcal X$ and $\psi^\mathcal Y$.  

\begin{definition}[Actions of $\psi^\mathcal X$ and $\psi^\mathcal Y$]
\phantomsection
\label{def:psi}
Let $\mathbbm{p} \in W_{k+1}(g)$.
    \begin{description}
         \item[Case $\mathbbm{p} \in \mathcal{X}$:]
            Write $\mathbbm{p} = \mathbbm{q} \mathbbm{r}_2 \mathbbm{r}_1$ according to \cref{def:codomain_split}. Then $\psi^\mathcal X: \mathcal{X} \to X$ is given by 
            \begin{align}
            \psi^\mathcal X(\mathbbm{q} \mathbbm{r}_2 \mathbbm{r}_1) = \mathbbm{q} \mathbbm{r}_1 \mathbbm{r}_2.
        \end{align}

        \item[Case $\mathbbm{p} \in \mathcal{Y}$:]
            Write $\mathbbm{p} = \mathbbm{q}_1 \mathbbm{r} \mathbbm{q}_2$ according to \cref{def:codomain_split}. Then $\psi^\mathcal Y: \mathcal{Y} \to Y$ is given by 
            \begin{align}
            \psi^\mathcal Y(\mathbbm{q}_1 \mathbbm{r} \mathbbm{q}_2) = \mathbbm{q}_1 \mathbbm{q}_2 \mathbbm{r}.
             \end{align}
        \lrcornerqedequation
    \end{description}
\end{definition}

\begin{prop}
\label{rem:codomain_psi_X}
The map $\psi^\mathcal{X}$ is well-defined.
\end{prop}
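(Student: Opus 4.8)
The plan is to show that $\psi^{\mathcal X}(\mathbbm p)=\mathbbm q\mathbbm r_1\mathbbm r_2$ lies in $X$ for every $\mathbbm p\in\mathcal X$, where I write $\mathbbm p=\mathbbm q\mathbbm r_2\mathbbm r_1$ in the canonical form of \cref{def:codomain_split}: $L$ is the last LPA of $\mathbbm p$, of elevation $e>0$; $B$ is the boundary point immediately preceding $L$; and $\mathbbm r_2$ runs from $B$ to $L$ while $\mathbbm r_1$ runs from $L$ to the endpoint. Since $X\subseteq N_k(g)\setminus S_k(g)$, there are three things to check: that $\psi^{\mathcal X}(\mathbbm p)$ has exactly $k$ flaws; that it does not lie in $S_k(g)$; and that it meets the criterion of \cref{def:domain_partition} for $X$ rather than $Y$.

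The crux, and the one genuinely non-routine step, is the claim that the subpath of $\mathbbm p$ from $B$ to the endpoint, namely $\mathbbm r_2\mathbbm r_1$ viewed as a path in its own right (legitimately so, since $B$ and the endpoint both have zero elevation), contains exactly one LPA, namely $L$. This is where membership in $\mathcal X$ is used. If $\mathbbm p$ has a single LPA the claim is immediate; otherwise $\mathbbm p\in\mathcal X$ means, by \cref{def:codomain_partition}, that some boundary point lies strictly between the last two LPAs of $\mathbbm p$, and since $B$ is the last boundary point before $L$ it must lie strictly after the second-to-last LPA, so $\mathbbm r_2\mathbbm r_1$ acquires no LPA other than $L$. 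Granting this, \cref{obs:flaws} applied to $\mathbbm r_2\mathbbm r_1$ split at its unique LPA $L$ shows that the interchange to $\mathbbm r_1\mathbbm r_2$ destroys exactly one flaw; as $\mathbbm q$ is untouched, $\psi^{\mathcal X}(\mathbbm p)$ then has exactly $k$ flaws.

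Next I would record the elevation bookkeeping for the image. Rearranging $\mathbbm r_2\mathbbm r_1$ into $\mathbbm r_1\mathbbm r_2$ lowers the elevation of every point of $\mathbbm r_1$ and of $\mathbbm r_2$ by $e$, so a point of the subpath $\mathbbm r_1\mathbbm r_2$ is on the boundary precisely when its elevation in $\mathbbm p$ was $e$, i.e.\ precisely when it is an LPA of $\mathbbm r_2\mathbbm r_1$. By the crux claim this is only $L$, which now appears as the startpoint $B$ and as the endpoint of $\psi^{\mathcal X}(\mathbbm p)$; hence $\mathbbm r_1\mathbbm r_2$ has no interior boundary point, $B$ is the last non-terminal boundary point of the image, and the split of \cref{def:domain_partition} returns exactly $q=\mathbbm q$ and $r=\mathbbm r_1\mathbbm r_2$. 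The same bookkeeping puts the repositioned copy of $B$ (the startpoint of $\mathbbm r_2$ inside the image) at elevation $-e$, so $r$ has a point strictly below the boundary and an HPB of magnitude at most $e$.

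These two facts dispatch the remaining obligations. Since $r=\mathbbm r_1\mathbbm r_2$ has a point below the boundary it cannot have max flaws, so the split $q=\mathbbm q$, $r=\mathbbm r_1\mathbbm r_2$ is not of the (flawless)(max-flaws) shape that characterizes $S_k(g)$; thus $\psi^{\mathcal X}(\mathbbm p)\notin S_k(g)$. For the $X$/$Y$ dichotomy, the LPAs of $\mathbbm q$ (if any) have elevation at least $e$, because they are points of $\mathbbm p$, whose minimum positive elevation is $e$, whereas the HPBs of $r$ have magnitude at most $e$; hence condition (ii) of \cref{def:domain_partition} fails, and if $\mathbbm q$ has no flaw then condition (i) already fails, so in all cases $\psi^{\mathcal X}(\mathbbm p)\in X$. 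I expect the only real difficulty to be the crux counting claim above, since once the subpath $\mathbbm r_2\mathbbm r_1$ is known to have the single LPA $L$ everything else reduces to tracking the uniform elevation shift by $e$.
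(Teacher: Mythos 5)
Your proposal is correct and follows essentially the same route as the paper's proof: the key step in both is that membership in $\mathcal X$ forces the subpath $\mathbbm r_2\mathbbm r_1$ to contain $L$ as its only LPA, from which the flaw count, the split of the image at its last non-terminal boundary point, and the elevation comparison between the LPAs of $\mathbbm q$ and the HPBs of $\mathbbm r_1\mathbbm r_2$ all follow. Your explicit check that the image avoids $S_k(g)$ (via the HPB of $\mathbbm r_1\mathbbm r_2$) is left implicit in the paper but is a welcome clarification, not a different argument.
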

\begin{proof}
    Let $\mathbbm{p} \in \mathcal{X}$.
    We must check that 
    $\psi^\mathcal X(\mathbbm{p}) = \mathbbm{q} \mathbbm{r}_1 \mathbbm{r}_2$ belongs to~$X$.

    Let $L$ be the endpoint of the path~$\mathbbm{r}_2$.
    By \cref{def:codomain_split}, we have:

    \begin{enumerate}
    \item[1.]
    $L$ is the last LPA of $\mathbbm{p}= \mathbbm{q} \mathbbm{r}_2 \mathbbm{r}_1$, and the startpoint of $\mathbbm{r}_2$ is the boundary point of $\mathbbm{p}$ which immediately precedes~$L$.
    \end{enumerate}
    
    By \cref{def:codomain_partition},  we have:

    \begin{enumerate}
    \item[2.]
    either $\mathbbm{p}$ has exactly one LPA,
    or the subpath of $\mathbbm{p}$ lying between the last two LPAs of $\mathbbm{p}$ contains a boundary point of~$\mathbbm{p}$.
    \end{enumerate}
    
    It follows from statements 1 and 2 that:

    \begin{enumerate}
    \item[3.]
    the subpath $\mathbbm{r}_2 \mathbbm{r}_1$ of $\mathbbm{p}$ contains exactly one LPA of $\mathbbm{p}$, namely the point~$L$.
    \end{enumerate}
    
    Statement 3 and \cref{obs:flaws} imply that:

    \begin{enumerate}
    \item[4.]
    the path $\psi^{\mathcal X}(\mathbbm{p}) = \mathbbm{q} \mathbbm{r}_1 \mathbbm{r}_2$ splits at its last non-terminal boundary point into the paths $\mathbbm{q}$ and~$\mathbbm{r}_1 \mathbbm{r}_2$.
    \item[5.]
    the path $\psi^\mathcal X(\mathbbm{p})= \mathbbm{q} \mathbbm{r}_1 \mathbbm{r}_2$ has exactly one fewer flaw than $\mathbbm{p}$.
    \end{enumerate}
    The elevation of $L$ in the path~$\mathbbm{r}_2 \mathbbm{r}_1$ equals the 
    magnitude of the elevation of the HPBs of the path~$\mathbbm{r}_1 \mathbbm{r}_2$. Since $L$ is an LPA of $\mathbbm{p}$ by statement 1, this gives:

    \begin{enumerate}
    \item[6.]
    the elevation of the LPAs of the path~$\mathbbm q$ (if any) is greater than or equal to the magnitude of the elevation of the HPBs of the path~$\mathbbm{r}_1 \mathbbm{r}_2$.
    \end{enumerate}
    
    Statements 4, 5 and 6 show by \cref{def:domain_partition} that $\psi^\mathcal X(\mathbbm{p}) \in X$.
\end{proof}

\begin{remark}
\label{rem:remark_after_psi_X_well-defined}
    Continue with the notation from the proof of \cref{rem:codomain_psi_X}.
    We note for use in \cref{subsec:proof} that statement~1 implies        
        the endpoint of $\mathbbm r_1$ is the last HPB of the path $\mathbbm r_1 \mathbbm r_2$. \lrcornerqed
\end{remark}

\begin{prop}
The map $\psi^\mathcal Y$ is well-defined.
\label{rem:codomain_psi_Y}
\end{prop}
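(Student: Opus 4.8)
The plan is to mirror the proof of \cref{rem:codomain_phi_Y}, of which $\psi^\mathcal{Y}$ is the formal inverse, and verify directly that $\psi^\mathcal{Y}(\mathbbm{p}) = \mathbbm{q}_1 \mathbbm{q}_2 \mathbbm{r}$ lands in~$Y$. First I would set up notation: write $\mathbbm{p} = \mathbbm{q}_1 \mathbbm{r} \mathbbm{q}_2$ according to \cref{def:codomain_split}, let $L'$ be the endpoint of $\mathbbm{q}_1$ (the second-last LPA of $\mathbbm{p}$), let $L$ be the endpoint of $\mathbbm{r}$ (the last LPA), and let $e>0$ be the common elevation of all LPAs. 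Condition~\ref{def:codomain_partition:condition_2} of \cref{def:codomain_partition} tells us $\mathbbm{r}$ contains no boundary points of $\mathbbm{p}$; combined with the fact that $L'$ and $L$ are consecutive LPAs, this forces every interior point of $\mathbbm{r}$ to have elevation either $>e$ or~$<0$ (it is neither a boundary point nor an LPA, so it cannot have elevation in the range $[0,e]$).

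The key technical device is to track how the three blocks move, which plays the role of \cref{obs:flaws}. Because $L$ and $L'$ have equal elevation, reattaching $\mathbbm{q}_2$ so that it begins at $L'$ (rather than at $L$) shifts it by a vector of elevation~$0$; hence every point of $\mathbbm{q}_2$ keeps its elevation, and in particular the former endpoint $(ga,gb)$ becomes a new boundary point $M$, the endpoint of $\mathbbm{q}_1\mathbbm{q}_2$. The block $\mathbbm{r}$ is then reattached at $M$, which shifts it by a vector of elevation $-e$, so every point of $\mathbbm{r}$ has its elevation lowered by exactly~$e$. By the previous paragraph the interior points of $\mathbbm{r}$ thereby move to elevation $>0$ or $<-e$ (never $0$), while $L$ (elevation $e$) lands on the endpoint~$(ga,gb)$.

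These two elevation shifts yield everything at once. Exactly one flaw is destroyed, namely the point $L$, so $\psi^\mathcal{Y}(\mathbbm{p}) \in N_k(g)$. Since the relocated $\mathbbm{r}$ has no interior boundary point and $M$ is itself an interior boundary point (as $\mathbbm{q}_1$ and $\mathbbm{r}$ are nonempty), $M$ is the last non-terminal boundary point of $\psi^\mathcal{Y}(\mathbbm{p})$; splitting there gives $q = \mathbbm{q}_1\mathbbm{q}_2$ and $r = \mathbbm{r}$ in the sense of \cref{def:domain_partition}. The point $L'$ is a flaw of $q$, so $q$ has at least one flaw: this establishes condition~\ref{def:domain_partition:condition_1} and simultaneously shows $\psi^\mathcal{Y}(\mathbbm{p}) \notin S_k(g)$. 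Finally the LPAs of $q$ have elevation $e$, whereas the HPBs of $r$ (if any) have elevation $<-e$ and hence magnitude $>e$, giving condition~\ref{def:domain_partition:condition_2}. Together these place $\psi^\mathcal{Y}(\mathbbm{p})$ in~$Y$.

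I expect the main obstacle to be the second paragraph: correctly verifying the two elevation shifts (namely $0$ for $\mathbbm{q}_2$ and $-e$ for $\mathbbm{r}$) and pinning down that the interior points of $\mathbbm{r}$ avoid elevations in $[0,e]$. Once these facts are in hand, the flaw count, the identification of $M$ as the last non-terminal boundary point, and both defining conditions for $Y$ follow mechanically. I note that, unlike in the proof of \cref{rem:codomain_psi_X}, one cannot apply \cref{obs:flaws} directly to a single subpath here, because the subpath $\mathbbm{r}\mathbbm{q}_2$ that one would rotate begins at the LPA $L'$ rather than at a boundary point, so as a path in its own right it does not share the boundary of $\mathbbm{p}$; this is precisely why the argument is phrased in terms of elevation shifts.
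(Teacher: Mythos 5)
Your proof is correct and follows essentially the same route as the paper: split $\mathbbm{p}$ at its last two LPAs, track how the rearrangement changes elevations to show exactly one flaw (the last LPA) is destroyed, identify the junction of $\mathbbm{q}_1\mathbbm{q}_2$ and $\mathbbm{r}$ as the last non-terminal boundary point, and verify conditions \ref{def:domain_partition:condition_1} and \ref{def:domain_partition:condition_2} of \cref{def:domain_partition}. Your explicit elevation-shift bookkeeping and the explicit check that $\psi^{\mathcal Y}(\mathbbm{p})\notin S_k(g)$ are just slightly more detailed versions of what the paper's statements 1--6 assert.
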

\begin{proof}
    Let $\mathbbm{p} \in \mathcal{Y}$.
    We must check  
    $\psi^\mathcal Y(\mathbbm{p}) = \mathbbm{q}_1 \mathbbm{q}_2 \mathbbm{r}$ belongs to~$Y$.
    Let $L$ be the endpoint of the path~$\mathbbm{q}_1$, and let $L'$ be the startpoint of the path~$\mathbbm{q}_2$.
    By \cref{def:codomain_split}, we have:

    \begin{enumerate}
    \item[1.]
    \label{rem:codomain_psi_Y:L_and_L'_are_LPAs}
    $L$ and $L'$ are the last two LPAs of~$\mathbbm p = \mathbbm{q}_1 \mathbbm{r} \mathbbm{q}_2$.
    \end{enumerate}

    It follows from statement 1 that
    \begin{enumerate}
    \item[2.]
    the path $\psi^{\mathcal Y}(\mathbbm{p}) = \mathbbm{q}_1 \mathbbm{q}_2 \mathbbm{r}$ splits at its last non-terminal boundary point into the paths $\mathbbm{q}_1 \mathbbm{q}_2$ and~$\mathbbm{r}$.
    \end{enumerate}
    
    The LPAs $L$ and $L'$ of $\mathbbm{p}$ combine to form a single point in $\psi^\mathcal Y(\mathbbm{p}) = \mathbbm{q}_1 \mathbbm{q}_2 \mathbbm{r}$, and so:
    
    \begin{enumerate}
    \item[3.]
    the subpaths $\mathbbm{q}_1$ and $\mathbbm{q}_2$ of $\psi^\mathcal Y(\mathbbm{p})$ collectively contain exactly one fewer flaw than the subpaths $\mathbbm{q}_1$ and $\mathbbm{q}_2$ of~$\mathbbm{p}$.
    
    \item[4.]
    the path $\mathbbm{q}_1 \mathbbm{q}_2$ has at least one LPA, namely the point~$L=L'$.
    \end{enumerate}

    By statement 1 and \cref{def:codomain_partition}\ref{def:codomain_partition:condition_2}, the subpath $\mathbbm r$ contains no boundary points of~$\mathbbm p$. This, together with statement 4, implies:

    \begin{enumerate}
    \item[5.]
    the elevation of the LPAs of the path $\mathbbm{q}_1 \mathbbm{q}_2$ is smaller than the magnitude of the elevation of the HPBs of the path~$\mathbbm{r}$ (if any).
    \end{enumerate}
    It follows from statement 5 that:
    \begin{enumerate}
    \item[6.]
    disregarding its startpoint and endpoint, the subpath $\mathbbm{r}$ of $\psi^\mathcal Y(\mathbbm{p}) = \mathbbm{q}_1 \mathbbm{q}_2 \mathbbm{r}$  contains the same number of flaws as the subpath $\mathbbm{r}$ of~$\mathbbm{p}$.
    \end{enumerate}
    
    By statements 3 and 6, the path $\psi^{\mathcal Y}(\mathbbm{p})$ contains exactly one fewer flaw than~$\mathbbm{p}$. By statement 4, the path $\mathbbm q_1 \mathbbm q_2$ has at least one flaw. Together with statements 2 and 5, this shows by \cref{def:domain_partition} that $\psi^\mathcal Y(\mathbbm{p}) \in Y$.
\end{proof}

\begin{remark}
\label{rem:remark_after_psi_Y_well-defined}
    Continue with the notation from the proof of \cref{rem:codomain_psi_Y}.
    We note for use in \cref{subsec:proof} that statement~1 implies the endpoint $L$ of $\mathbbm q_1$ is the last LPA of the path $\mathbbm{q}_1 \mathbbm{q}_2$. \lrcornerqed
\end{remark}

\subsection{The maps \texorpdfstring{$\phi$}{phi} and \texorpdfstring{$\psi$}{psi} are injective}
\phantomsection
\label{subsec:proof}

We complete the proof of \cref{thm:set_sizes} by showing in turn that each of the maps $\phi^X$, $\phi^Y$, $\psi^{\mathcal{X}}$, $\psi^{\mathcal{Y}}$ is injective.
We give the proof for $\phi^X$ and $\phi^Y$ in detail, and for $\psi^{\mathcal{X}}$ and $\psi^{\mathcal{Y}}$ in abbreviated form.

\begin{description}
    \item[The map $\phi^X$ is injective:]\hfill \\[6pt]
        Let $p,p' \in X$, and write $p = q r_1 r_2$ and $p' = q' r_1' r_2'$ according to \cref{def:domain_split}. We suppose that $\phi^X(p) = \phi^X(p')$, and wish to show that $p=p'$.

        By statement~4 in the proof of \cref{rem:codomain_phi_X},
        the endpoint $L$ of $r_2$ is the last LPA of~$\phi^X(p) = q r_2 r_1$.
        By \cref{rem:remark_after_phi_X_well-defined},
        the startpoint $H$ of $r_2$ is the boundary point of $\phi^X(p) = qr_2r_1$ immediately preceding~$L$.

        Therefore $\phi^X(p) = qr_2r_1$ splits into $qr_2$ and $r_1$ at the last LPA $L$ of~$\phi^X(p)$, and the subpath $qr_2$ splits into $q$ and~$r_2$ at the boundary point of $\phi^X(p)$ immediately preceding~$L$. The corresponding statement holds for $\phi^X(p')$. Since $\phi^X(p)$ and $\phi^X(p')$ are equal by assumption, their LPAs and boundary points are identical. 
        Therefore $q = q'$ and $r_2=r_2'$ and $r_1 = r_1'$ and so 
        $p = q r_1 r_2 = q' r_1' r_2' = p'$, as required.

    \item[The map $\phi^Y$ is injective:]\hfill \\[6pt]
        Let $p, p' \in Y$, and write $p = q_1 q_2 r$ and $p' = q_1' q_2' r'$ according to \cref{def:domain_split}. 
        We suppose that $\phi^Y(p) = \phi^Y(p')$,
        and wish to show that $p = p'$. 

        By statement~2 in the proof of \cref{rem:codomain_phi_Y}, the endpoint $L$ of $q_1$ and the startpoint $L'$ of $q_2$
        are the last two LPAs of~$\phi^Y(p)=q_1 r q_2$, and so  $\phi^Y(p)$ splits at its last two LPAs into $q_1$ and $r$ and $q_2$.
        Likewise, $\phi^Y(p')$ splits at its last two LPAs into $q_1'$ and $r'$ and $q_2'$. But $\phi^Y(p)$ and $\phi^Y(p')$ are equal by assumption, so their last two LPAs are identical. Therefore $q_1 = q_1'$ and $r=r'$ and $q_2 = q_2'$ and so 
        $p = q_1 q_2 r = q_1' q_2' r' = p'$, as required.
        
    \item[The map $\psi^\mathcal X$ is injective:]\hfill \\[6pt]
        Let $\mathbbm{p}, \mathbbm {p'} \in \mathcal X$, and write $\mathbbm p = \mathbbm q \mathbbm r_2 \mathbbm r_1$ and $\mathbbm p' = \mathbbm q' \mathbbm r_2' \mathbbm r_1'$ according to \cref{def:codomain_split}. 
        We suppose that $\psi^\mathcal X(\mathbbm p) = \psi^\mathcal X(\mathbbm p')$, and wish to show that $\mathbbm p=\mathbbm p'$.

        By statement~4 in the proof of \cref{rem:codomain_psi_X},
        the path $\psi^{\mathcal X}(\mathbbm{p}) = \mathbbm{q} \mathbbm{r}_1 \mathbbm{r}_2$ splits at its last non-terminal boundary point into $\mathbbm{q}$ and~$\mathbbm{r}_1 \mathbbm{r}_2$.
        By \cref{rem:remark_after_psi_X_well-defined}
        the path $\mathbbm{r}_1 \mathbbm{r}_2$ splits at its last HPB into $\mathbbm{r}_1$ and~$\mathbbm{r}_2$.
        
        It follows that 
        $\mathbbm{p} = \mathbbm{q} \mathbbm{r}_2 \mathbbm{r}_1 = \mathbbm{q}' \mathbbm{r}_2' \mathbbm{r}_1' = \mathbbm{p}'$, as required.

    \item[The map $\psi^\mathcal Y$ is injective:]\hfill \\[6pt]
        Let $\mathbbm{p}, \mathbbm {p'} \in \mathcal Y$, and write $\mathbbm p = \mathbbm q_1 \mathbbm r \mathbbm q_2$ and $\mathbbm p' = \mathbbm q_1' \mathbbm r' \mathbbm q_2'$ according to \cref{def:codomain_split}. 
        We suppose that $\psi^\mathcal Y(\mathbbm p) = \psi^\mathcal Y(\mathbbm p')$, and wish to show that $\mathbbm p=\mathbbm p'$.

        By statement~2 in the proof of \cref{rem:codomain_psi_Y}, the path $\psi^{\mathcal Y}(\mathbbm{p}) = \mathbbm{q}_1 \mathbbm{q}_2 \mathbbm{r}$ splits at its last non-terminal boundary point into $\mathbbm{q}_1 \mathbbm{q}_2$ and~$\mathbbm{r}$.
        By \cref{rem:remark_after_psi_Y_well-defined},
        the path $\mathbbm{q}_1 \mathbbm{q}_2$ splits at its last LPA into $\mathbbm{q}_1$ and~$\mathbbm{q}_2$.
        
        It follows that 
        $\mathbbm{p} = \mathbbm{q}_1 \mathbbm{r} \mathbbm{q}_2 = \mathbbm{q}_1' \mathbbm{r}' \mathbbm{q}_2' = \mathbbm{p}'$, as required.

\end{description}

\subsection{The special case \texorpdfstring{$g=1$}{geq1} \texorpdfstring{(\cref{thm:fir_mar_rat})}{}}
\phantomsection
\label{sec:geqaul1}

We finally re-examine the special case $g=1$ (\cref{thm:fir_mar_rat}), involving paths from $(0,0)$ to $(a,b)$, to show how the proof of \cref{thm:set_sizes} described in \cref{sec:bijection} simplifies significantly.   In doing so, we shall obtain a simple self-contained proof of \cref{thm:fir_mar_rat}.

Let $k$ satisfy $0\le k < a+b-1$ and let $p$ be a path in~$W_k(1)$. Since
\begin{enumerate}
    \item 
    the path $p$ cannot have any interior boundary points,
\end{enumerate}
it follows that:
\begin{enumerate}
    \item[2.]
    the subset $S_k(1)$ of $W_k(1)$ is empty by \cref{def:S}, 

    \item[3.]
    the subset $Y$ of  $ W_k(1)$ is empty by \cref{def:domain_partition}.
\end{enumerate}

Since $g=1$, the path $p$ has at most one LPA and so
\begin{enumerate}
    \item[4.]
    the subset $\mathcal Y$ of $ W_{k+1}(1)$ is empty by \cref{def:codomain_partition}.
\end{enumerate}
By reference to \cref{fig:phi_construction_new}, statements 2, 3 and 4 show that $W_k(1) = X$ and $W_{k+1}(1) = \mathcal{X}$.
We shall show that $\phi^X$ and $\psi^X$ are inverse maps, so that $|W_k(1)| = |W_{k+1}(1)|$. We may then conclude that $|W_0(1)| = |W_1(1)| = \cdots = |W_{a+b-1}(1)|$, which gives \cref{thm:fir_mar_rat} because the total number of paths from $(0,0)$ to $(a,b)$ is~$\binom{a+b}{a}$.

It remains to show that $\phi^X$ and $\psi^X$ are inverse maps. Write $p \in X$ as its canonical representation $p=qr_1r_2$ according to \cref{def:domain_split}, where $p$ has at least one HPB. 
By statement 1, we have that $q$ is empty and so $p=r_1r_2$. Since~$g=1$, the path $p$ has at most one HPB.
Therefore by \cref{def:phi} the map $\phi^X$ splits $p$ at its unique HPB into $r_1r_2$ and replaces it by~$r_2r_1$. That is, $\phi^X$  cyclically permutes the steps of $p$ by bringing the unique HPB to the origin.

Similarly, write $\mathbbm p \in \mathcal X$ as its canonical representation $\mathbbm p = \mathbbm q \mathbbm r_2 \mathbbm r_1$ according to \cref{def:codomain_split}, where $\mathbbm q$ is empty by statement~$1$. 
By \cref{def:psi}, the map $\psi^X$ splits $\mathbbm p$ at its unique LPA into $\mathbbm r_2 \mathbbm r_1$ and replaces it by~$\mathbbm r_1 \mathbbm r_2$. That is, $\psi^X$  cyclically permutes the steps of $p$ by bringing the unique LPA to the origin. 

Comparison of the descriptions of $\phi^X$ and $\psi^X$ shows that they are inverse maps, as required.

\section{Conclusion}
Our central objective was to find an explicit formula for~$|W_k(g)|$, the 
number of simple lattice paths from $(0,0)$ to~$(ga,gb)$ having exactly $k$ lattice points lying strictly above the linear boundary joining the startpoint to the endpoint. This is given by the closed form expression in \cref{thm:path_counting_formula}, using the definition \eqref{def:mu_j} of~$\mu_j(g)$.

We conclude with two open problems for future study.

\begin{enumerate}
    \item 
    Evaluating $|W_k(g)|$ via the path enumeration formula \cref{thm:path_counting_formula} involves a sum over integer partitions of~$g$, and is therefore computationally intensive.  In the special case $a=1$, \cref{thm:path_counting_aisone_case} provides an alternative expression to \cref{thm:path_counting_formula} that is computationally simpler. Is there a closed form expression for~$|W_k(g)|$ that is computationally simpler than \cref{thm:path_counting_formula} for other special cases of~$a,b$ (or in general)?
    
    \item We established the path enumeration formula by solving the recurrence relation given in \cref{cor:rec} and making use of the known values stated in \cref{thm:bizley}. These known values are in turn predicated on \cref{thm:actual_bizley}, which was proved by Bizley using generating functions~\cite{biz}. 
    Is there a direct combinatorial proof of \cref{thm:actual_bizley}?
\end{enumerate}

\section{Acknowledgments}
The first and third authors collaborated with Takudzwa Marwendo in 2019 on a preliminary investigation of the material of this paper, resulting in \cref{thm:fir_mar_rat,cor:path_counting_aisbisone_case}, the results of various numerical experiments, and the conjecture that 
property \hyperref[property:const_blocks]{P1} (constant on blocks) 
holds. This investigation was a major inspiration for this paper. We are grateful to Taku for his many contributions and helpful conversations. We thank Ira Gessel for helpful conversations, Fran\c{c}ois Bergeron for pointing out the reference~\cite{triangular-berg}, and Alvaro Gutierrez Caceres for some stimulating discussions.
We thank the referee for their very careful reading of the paper and for providing many helpful suggestions for improvement.


\end{document}